\documentclass[a4paper]{article}
\usepackage{amsfonts,amsmath,amssymb,amsthm,url}
\usepackage[all]{xypic}
\usepackage[noadjust]{cite}

\usepackage{epsfig}
\usepackage[british]{babel}
\usepackage{hyperref}
\usepackage{multirow}
\usepackage{lscape}
\usepackage{longtable}
\usepackage{pdflscape}
\usepackage{caption}
\usepackage[a4paper]{geometry}
\usepackage{enumitem}

\numberwithin{equation}{section}

\newtheorem{theorem}{Theorem}[section]

\theoremstyle{definition}

\newtheorem{proposition}[theorem]{Proposition}
\newtheorem{lemma}[theorem]{Lemma}
\newtheorem{corollary}[theorem]{Corollary}
\newtheorem{remark}[theorem]{Remark}
\newtheorem{definition}[theorem]{Definition}
\newtheorem{algorithm}[theorem]{Algorithm}
\newtheorem{algorithmsketch}[theorem]{Algorithm Sketch}

\newcommand{\HH}{\mathcal{H}}

\newcommand{\CC}{\mathbf{C}}
\newcommand{\ZZ}{\mathbf{Z}}
\newcommand{\RR}{\mathbf{R}}
\newcommand{\QQ}{\mathbf{Q}}
\newcommand{\FF}{\mathbf{F}}
\newcommand{\QQbar}{\overline{\QQ}}

\newcommand{\Gal}{\mathrm{Gal}}
\newcommand{\GL}{\mathrm{GL}}

\newcommand{\End}{\mathrm{End}}

\newcommand{\SL}{\mathrm{SL}}
\renewcommand{\mod}{\ \mathrm{mod}\ }
\newcommand{\PP}{\mathbf{P}}

\newcommand{\Aut}{\mathrm{Aut}}
\newcommand{\kbar}{\overline{k}}

\newcommand{\tr}{\mathrm{tr}}
\newcommand{\rightaction}{\mathrel{\rotatebox{90}{$\circlearrowright$}}}
\newcommand{\fnew}{F^\dagger}
\newcommand{\Den}{\mathfrak{D}}

\title{Examples of CM curves of genus two defined over the reflex field}

\author{Florian Bouyer\thanks{University of Warwick,
\url{http://www.warwick.ac.uk/fbouyer}
email: \url{F.Bouyer@Warwick.ac.uk},
supported by the University of Warwick
Undergraduate Research Scholarship Scheme (URSS)}  
\ and Marco Streng\thanks{Universiteit Leiden,
\url{http://www.math.leidenuniv.nl/~streng},
email: \url{Marco.Streng@gmail.com},
partially supported by EPSRC grant number
EP/G004870/1 and by NWO Veni project number 639.031.243}
}

\begin{document}
 
\maketitle

 \begin{abstract}
\noindent 
Van Wamelen~\cite{vanwamelen} lists
19 curves of genus two over $\QQ$
with complex multiplication (CM).
However, for each
curve,
the CM-field turns
out to be cyclic Galois over~$\QQ$,
and the 
generic 
case of
a non-Galois
quartic CM-field
did not feature in this list.
The reason is that the field
of definition in that case always contains
the real quadratic subfield of the reflex field.

We extend Van Wamelen's list to include curves
of genus two defined over this real quadratic field.
Our list therefore contains
the smallest ``generic'' examples
of CM curves of genus two.

We explain our methods for obtaining this list, including a
new height-reduction
algorithm for arbitrary hyperelliptic curves
over totally real number fields.
Unlike Van Wamelen, we also give a proof of our list,
which is made possible by our implementation
of denominator bounds of Lauter and Viray for
Igusa class polynomials.\\
\\
cite as: 
LMS Journal of Computation and Mathematics, volume 18 (2015), issue 01, pp.~507--538
\end{abstract}

\section{Introduction}

We say that a curve $C/k$ of genus $g$ has \emph{complex multiplication}
(CM) if the endomorphism ring of its Jacobian over $\overline{k}$
contains
an order in a number field~$K$ of degree~$2g$.
Curves of genus one
(elliptic curves)
and two with complex multiplication
are important in the
\emph{CM-method} for constructing (hyper)elliptic curves
for cryptography, and for construction of
\emph{class fields} from class field theory.

It is well known that there exist exactly 13 elliptic curves
over $\QQ$ with complex multiplication
(see e.g.~\cite[Theorem~7.30(ii)]{cox}).
Analogously, Van Wamelen~\cite{vanwamelen} gives a list of 19
curves of genus two over $\QQ$
with CM
by a maximal order
(proven in~\cite{correctnesswamelen, bisson-streng}).

In the genus-two case, the (quartic) CM-field~$K$
is either cyclic Galois,
biquadratic Galois, or non-Galois with Galois group~$D_4$.
Like Van Wamelen, we disregard the degenerate biquadratic case, as the corresponding
Jacobians are isogenous to a product of
CM elliptic curves.
Murabayashi and Umegaki~\cite{completenesswamelen} then show
that Van Wamelen's list is complete.
However, the list only contains examples of the cyclic case,
not the~$D_4$ case, because curves in the latter
case cannot be defined over~$\QQ$.

In this paper, we give a list of the simplest examples
of the $D_4$ case, namely those defined over 
certain real
quadratic extensions of~$\QQ$.
Our end result is as follows.
\begin{theorem}\label{thm:main}
For every row of the tables 1a, 1b, and 2b
on pages~\pageref{table1astart}--\pageref{table2bend},
let $K=\QQ[X]/(X^4+AX^2+B)$,
where $[D,A,B]$ is as in the first column of the table.
Then the curves $C:y^2=f(x)$
where $f$ is as in the last column
are exactly all curves
with complex multiplication
by the maximal order of~$K$,
up to isomorphism over $\QQbar$
and up to automorphism of $\QQbar$.

The number $a$ that may appear in the coefficients of
$f$ is as follows. In table 1b, let $D'=D$,
and in table 2b, let $[D',A',B']$ be as in the second
column. Let $\epsilon\in\{0,1\}$ be $D'$ modulo 4.
Then $a$ is a root of
$x^2+\epsilon x + (\epsilon-D')/4=0$.
\end{theorem}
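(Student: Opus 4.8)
The plan is to combine Shimura's theory of complex multiplication with explicit computation, in three stages: describe the relevant abelian surfaces, and hence the curves, abstractly; determine their Igusa invariants together with a proof of correctness; and match these invariants to the explicit models in the tables. Throughout we use that $K$ is a \emph{primitive} quartic CM-field: since $K$ is not biquadratic it contains no imaginary quadratic subfield, so every CM-type of $K$ is primitive and every abelian surface over $\QQbar$ with complex multiplication by $\OO_K$ is absolutely simple. By Torelli, such a surface with a principal polarization is the Jacobian of a genus-two curve, unique up to isomorphism, and conversely the Jacobian of any genus-two curve with CM by $\OO_K$ carries its canonical principal polarization. Since complex multiplication is a property of the geometric endomorphism ring it is preserved by twisting, and two genus-two curves are $\QQbar$-isomorphic exactly when their triples of absolute Igusa invariants coincide. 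Hence the set of curves to be described corresponds bijectively to the finite set of Igusa triples arising on principally polarized abelian surfaces with CM by $\OO_K$, and ``up to automorphism of $\QQbar$'' means we must exhibit one curve per $\Gal(\QQbar/\QQ)$-orbit of these triples.

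We would first count the triples using Shimura's theory: for each of the (one or two) equivalence classes of primitive CM-types $\Phi$ on $K$, the principally polarized abelian surfaces with CM by $(\OO_K,\Phi)$ are parametrised by the pairs $(\mathfrak a,\xi)$ --- with $\mathfrak a$ a fractional $\OO_K$-ideal and $\xi\in K^{\times}$ totally imaginary, $\Phi$-positive and satisfying $\xi\,\mathfrak a\bar{\mathfrak a}=\mathfrak d_{K/\QQ}^{-1}$ --- modulo the natural equivalence, so that the number $N$ of triples is given by an exact, computable formula. Moreover, the main theorem of complex multiplication, applied through the reflex field $K^r$, shows that the Igusa class polynomials (the monic polynomials whose roots are the values of a fixed absolute Igusa invariant on the surfaces in a $\Gal$-orbit) have coefficients in the real quadratic subfield of $K^r$, which is $\QQ(\sqrt{D'})$ with ring of integers generated by the element $a$ of the statement, and in $\QQ$ precisely in the cyclic case --- recovering the setting of van Wamelen's list and of the completeness theorem of Murabayashi and Umegaki.

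Next we would compute and certify these polynomials. For each CM-type and ideal class one writes a symplectic period matrix $\tau\in\HH_2$ attached to $(\OO_K,\Phi,\mathfrak a,\xi)$, evaluates the ten even theta constants at $\tau$ to high precision, forms the Igusa--Clebsch and then the absolute Igusa invariants, and recognises the class polynomials as polynomials over $\OO_{\QQ(\sqrt{D'})}[1/S]$ for a finite set of primes $S$. To make this a proof one needs: (i) the exact degree, supplied by the formula for $N$; (ii) an explicit finite $S$ containing every prime that can divide a denominator of a coefficient, which is exactly where the denominator bounds of Lauter and Viray, as implemented here, enter; and (iii) an a priori bound on the archimedean absolute values of the coefficients, obtained by reducing $\tau$ to a fundamental domain (so that $\mathrm{Im}\,\tau$ is bounded below, and above in terms of the discriminant of $K$) and bounding the resulting theta values. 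Given (i)--(iii) the coefficients lie in an explicit finite set, so a computation at sufficiently high precision determines them uniquely; factoring over $\QQ(\sqrt{D'})$ and choosing one root triple per $\Gal$-orbit then yields the complete, provably correct list of Igusa triples for $\OO_K$.

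Finally, for each triple we would run Mestre's algorithm --- with the Cardona--Quer variants in the cases with extra automorphisms --- to produce a genus-two curve over the field $\QQ$ or $\QQ(\sqrt{D'})$ generated by the triple, apply the height-reduction algorithm of this paper to bring it to the model $y^2=f(x)$ printed in the table, and check by direct computation of invariants that $f$ realises the triple. Since every quadratic twist is $\QQbar$-isomorphic to this curve and has the same invariants, and since distinct table entries carry distinct triples, the listed curves are pairwise inequivalent under $\QQbar$-isomorphism and $\Aut(\QQbar)$, and by the count $N$ they exhaust all curves with CM by $\OO_K$; this proves the theorem. We expect the main obstacle to be rigour rather than computation: without a genuinely explicit finite set $S$ of bad primes, together with a matching precision estimate, the reconstructed polynomials are merely heuristic --- which is exactly why van Wamelen's list came without a proof --- so the delicate work is making the Lauter--Viray denominator bound effective for each field in the tables. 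A secondary, error-prone point is enumerating the CM-types and ideal classes so that $N$ is exact and no surface is double-counted, and the practical bottleneck in producing the clean tables is the height reduction over the real quadratic fields.
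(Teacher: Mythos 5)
Your proposal is correct and follows essentially the same route as the paper: a rigorously certified computation of the Igusa class polynomials (complex-analytic evaluation made provable by the Lauter--Viray denominator bounds together with the exact count of CM points coming from Shimura's theory), followed by Mestre/Cardona--Quer reconstruction, height reduction over $\QQ(\sqrt{D'})$, and a direct check that the tabulated $f$ realise the certified invariants, with Galois conjugation accounting for ``up to automorphism of $\QQbar$''. The only notable divergence is in how the numerics are certified: the paper uses interval arithmetic, so no a priori archimedean bound on the class-polynomial coefficients (your item (iii)) is needed --- one simply verifies a posteriori that the computed intervals are narrower than the rounding tolerance determined by the denominator bound --- such height bounds being required only for the $p$-adic/CRT variants.
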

 Section~\ref{sec:list} contains more detailed statements,
including an explanation of the
other columns.

P{\i}nar K{\i}l{\i}{\c{c}}er and the second-named author
are currently working on a proof of completeness.
That is, we believe that the first columns of
Tables 1a, 1b, and 2b contain exactly
the quartic
fields $K$ for which there exists a curve $C$
of genus two with $\mathrm{End}(J(C))=\mathcal{O}_K$
such that $C$ is defined over the
real quadratic subfield of the reflex field.

Now, let us give a more detailed overview of
our methods, which form the bulk of this paper.

Given a quartic CM-field~$K$, we compute the curves $C/\QQbar$
of genus two with $\End(J(C))\cong \mathcal{O}_K$
in three stages.
First we compute the Igusa invariants $i_1(C)$,
$i_2(C)$ and $i_3(C)$ as elements of~$\QQbar$.
Second we compute an arbitrary model of $C$
from its Igusa invariants.
Third we reduce this model to a small model, that is,
a model with integer coefficients of only
a handful of digits.

Section~\ref{sec:invcmmain} explains Igusa
invariants and how to compute them,
that is, the first stage of our algorithm.
Our new contribution there is an implementation
of denominator bounds of Lauter and
Viray~\cite{lauter-viray-denominators},
which allows us to be the first to systematically
compute and prove correctness
of CM Igusa invariants.

Section~\ref{sec:mestre}
quickly reviews Mestre's algorithm
for computing a model of $C$ from its
invariants; the middle stage of our algorithm.

Section~\ref{sec:reduction}
explains how to go from any model
to a small model
which is the final stage of our algorithm.
Mestre's algorithm constructs curves with given invariants,
but these curves have coefficients of thousands of digits,
so we use a reduction algorithm to reduce
the coefficient size.
Our main new contribution there is
a reduction algorithm based on 
Stoll and Cremona~\cite{stoll-cremona},
including an implementation.

We applied this algorithm to fields
in the Echidna database~\cite{echidna}
and obtained our tables.
Section~\ref{sec:list} gives a detailed version
of Theorem~\ref{thm:main},
explaining all columns of the tables.
We end with a cryptographic application in Section~\ref{sec:application}.

\subsection*{Acknowledgements}
The authors would like to thank
Bill Hart for his help with
the factoring software GMP-ECM and CADO-NFS,
Jeroen Sijsling for useful discussions
about models and invariants,
Damiano Testa as advisor of the first-named author,
Kristin Lauter and Bianca Viray for help with their
formulas~\cite{lauter-viray-denominators},
Christophe Ritzenthaler for the reference~\cite{milne-definition},
and the anonymous referee for helpful suggestions
for the improvement of the exposition.

\section{Invariants and complex multiplication}
\label{sec:invcmmain}

\subsection{Overview}

The first stage of our algorithm is,
given a quartic CM-field~$K$,
to obtain the Igusa invariants of
the curves $C$ of genus two
with $\End(J(C))\cong \mathcal{O}_K$.
There are various practical methods for
doing so (complex analytic, $p$-adic, or using
the Chinese Remainder Theorem)
numerically up to some precision,
and results have been collected also
in the Echidna database~\cite{echidna}.

However, we want a proven output
and the only prior proven method
is that of Streng~\cite{runtime},
which in practice is too slow
even for the relatively small discriminants
we consider.
This section explains
the complex analytic method and
shows how to make it into a method
that is both practical and proven,
using denominator bounds
of Lauter and Viray~\cite{lauter-viray-denominators}.

Our algorithm for this part closely follows
that of~\cite{runtime}, so
many of the details and proofs of what follows
can be found there.

\subsection{Igusa invariants and Igusa class polynomials}
\label{ssec:icp}

Since the goal of this section is to compute
Igusa invariants, let us begin by reviewing
them and explaining how they are represented
by Igusa class polynomials.
For details on Igusa invariants, see
Igusa~\cite{igusa},
and for details on Igusa class polynomials,
see Streng~\cite{runtime}.

For an elliptic curve~$E/k$, the $j$-invariant $j(E)\in k$
uniquely specifies
the isomorphism class of~$E$ over~$\kbar$.

For a (smooth, projective, geometrically
irreducible algebraic) curve $C/k$
of genus two, the situation is a bit more complicated.
For simplicity, we assume $k$ has characteristic
different from $2$, $3$, $5$.
Every curve of genus two is hyperelliptic, 
that is, is birational to an affine curve
$y^2 = f(x)$ where $f\in k[x]$ has degree $5$ or~$6$
and no roots of multiplicity~$>1$.

The \emph{Igusa-Clebsch invariants}
$I_2$, $I_4$, $I_6$, and $I_{10}$
are polynomials in the coefficients of~$f$.
They can be found in Igusa~\cite{igusa},
where they are denoted $A$, $B$, $C$, $D$
and are based on invariants of Clebsch.
They are also available in the software packages
Magma~\cite{magma} and Sage~\cite{Sage}.
The last invariant, $I_{10}$, is
related to the discriminant of~$f$
and
is always non-zero.
Actually, for efficiency we use
$I_6' = \frac{1}{2} (I_2I_4-3I_6)$
instead of $I_6$,
but one can easily go back and forth using
$I_6 = \frac{1}{3} (I_2I_4-2I_6')$.
See~\cite{runtime}.

Isomorphic hyperelliptic curves have Igusa-Clebsch
invariants that are equal up to a weighted scaling.
In fact, for curves $C$ and $C'$, we have $C_{\kbar} \cong C_{\kbar}'$
if and only if
there is a $\lambda\in\kbar^*$
such that for $j=2,4,6,10$ we have
$I_j(C)=\lambda^j I_j(C')$.

In more geometric language,
if $\PP^{2,4,6,10}$ is the
\emph{weighted projective space} of weights $(2,4,6,10)$
with the Igusa-Clebsch invariants as coordinates,
then the subspace $\mathcal{M}_2\subset \PP^{2,4,6,10}$ 
defined by $x_{10}\not=0$
is a coarse moduli space of genus-two curves
in characteristic not dividing $2\cdot 3\cdot 5$.

Following~\cite{runtime},
we make a choice of three \emph{absolute Igusa invariants}
$i_1, i_2, i_3$,
which generate the function field of the
moduli space:
\[ i_1 = \frac{I_4^{\phantom{2}}I_6'}{I_{10}^{\phantom{2}}},\quad
i_2 = \frac{I_2^{\phantom{2}}I_4^2}{I_{10}^{\phantom{2}}},\quad
i_3 = \frac{I_4^5}{I_{10}^2}.\]
Given a curve $C$, if $I_2(C)\not=0$, then $C$
is uniquely specified by $i_1,i_2,i_3$ because
of
$I_4 = I_2^{\vphantom{-}2}i_2^{-2} i_3^{\phantom{2}}$,
$I_6' = I_2^{\vphantom{-}3} i_1^{\vphantom{-2}} i_2^{-3} i_3^{\vphantom{-2}}  $,
and $I_{10} = I_2^{\vphantom{-}5} i_2^{-5} i_3^{\vphantom{-}2}$.
And if we are unlucky enough to find $I_2(C)=0$,
then variants of these absolute Igusa invariants
will do the trick~\cite{cardona-quer}.

The Igusa class polynomials of a quartic CM-field $K$
are polynomials that specify the values of
$i_n(C)$ where $C$ has CM by $\mathcal{O}_K$.
In detail, they are
$$ H_{K,1} = \prod_{C} (X-i_1(C)),\quad
\widehat{H}_{K,n} = \sum_{C} i_n(C)\prod_{C'\not\cong C}
(X-i_1(C'))\in\QQ[X],
$$
for $n\in\{2,3\}$,
where $C$ and $C'$ range over isomorphism classes
of curves with $\End(J(C))\cong \mathcal{O}_K$.

One can recover the Igusa invariants
$i_1(C), i_2(C), i_3(C)$ from these polynomials
by taking all roots $i_1(C)$ of $H_{K,1}$
and letting, for $n\in\{2,3\}$,
$$i_n(C) = \widehat{H}_{K,n}(i_1(C))/H_{K,1}'(i_1(C)),$$
assuming $H_{K,1}$ has no roots of multiplicity $>1$.
Again, if we are unlucky, there are ways to work around
this (\cite[Section III.5]{phdthesis}).

In particular, our goal in Section~\ref{sec:invcmmain}
is to compute $H_{K,1}$ and $\widehat{H}_{K,n}$.

\subsection{Complex approximation of Igusa class polynomials}
\label{sec:CCapprox}

Streng~\cite{runtime}
explains in detail how to compute complex numerical
approximations of $H_{K,1}$ and~$H_{K,n}$.
The only way in which we deviate from the method of~\cite{runtime}
is by using interval arithmetic in our implementation.

Interval arithmetic is a computational model for~$\RR$
where real numbers are represented by intervals
that contain them. For intervals $a_i$
and a map $f:\RR^n\rightarrow \RR$, when asking
the computer for $f(a_1,\ldots, a_n)$, it
returns an interval
that contains $f(x_1,\ldots,x_n)$ for all $x_i\in a_n$.
Rounding is always done in such a way that the intervals
are guaranteed to be correct, hence the user does not
have to estimate rounding errors by hand.

Given any integer $N>0$,
we compute $F_1$, $F_2$, $F_3\in\QQ[X]$ such that
the polynomials
$F_1-H_{K,1}$ and $F_n-\widehat{H}_{K,n}$ 
are proven to have coefficients of absolute value
$<2^{-N}$ as follows.
Fix some precision $>N$ and do (floating point)
interval arithmetic to that precision.
If the output intervals are not small enough,
then double the precision and start over.

\subsection{Denominators}

\subsubsection{Using denominator bounds}

The first stage of our algorithm for computing
CM curves is computing their Igusa invariants,
and we have so far determined that it suffices to
compute the Igusa class polynomials
$H_{K,1}$, $\widehat{H}_{K,2}$, $\widehat{H}_{K,3}\in \QQ[X]$.
In this section, we give references
for how to compute a positive
integer $\Den{}=\Den{}_K$ such that 
$\Den{}H_{K,1}$, $\Den{}\widehat{H}_{K,2}$, and $\Den{}\widehat{H}_{K,3}$
all have \emph{integer} coefficients.
In particular, if we use the method
of Section~\ref{sec:CCapprox} to compute
approximations $F_1$, $F_2$, $F_3$ of
the Igusa class polynomials such that
all coefficients of $F_1-H_{K,1}$ and $F_n-\widehat{H}_{K,n}$ 
are proven to be of absolute value $<\frac{1}{2}\Den{}$,
then by rounding the coefficients
of $\Den{}F_i$ to the nearest integer
and dividing by $\Den{}$, we recover the Igusa class polynomials.

The first upper bounds on the primes dividing
the denominator of $H_{K, 1}$ and $\widehat{H}_{K,n}$
were given by Goren and Lauter~\cite{goren-lauter}.
More recently they~\cite{gorenlautertoappear}
also gave upper bounds
on the exponents with which these primes occur,
and combining these results leads to
a correct number $\Den{}$ as above.
This number is studied and used in~\cite{runtime},
but is too large to yield a practical algorithm.
An alternative of Bruinier and Yang~\cite{bruinier-yang,yang}
does give a very sharp number~$\Den$, but puts
too many restrictions on the quartic CM-field~$K$.
Fortunately, Lauter and Viray~\cite{lauter-viray-denominators}
managed to extend the latter
bounds to general number fields in a way that stays
sharp enough for our applications.

We have implemented the bounds of Lauter and
Viray~\cite{lauter-viray-denominators}
in Sage, and made the implementation
available at~\cite{cmcode}.
This finishes the first stage of our algorithm:
computing CM Igusa invariants.
We applied the denominator formulas
of~\cite{lauter-viray-denominators}
quite straightforwardly, but those
familiar with the formulas may wish to see a few more details.
We give these details in Section~\ref{ssec:detailslv},
but in order not to have to repeat the (complicated) formulas,
this may be of use only for those
who have~\cite{lauter-viray-denominators} close by.
Other readers may wish to skip to
Section~\ref{sec:mestre}.

\begin{remark}
\label{rem:padicandcrt}
Computing proven Igusa class polynomials is not
only possible with the complex analytic method,
but also with the methods based on $p$-adic numbers
(e.g.~\cite{ghkrw-2adic,ckl-3adic})
and the Chinese Remainder
Theorem
(e.g.~\cite{eisentrager-lauter}).
These methods first compute the
coefficients of the polynomials as elements of $\ZZ/N\ZZ$,
where $N$ is a large power of a small prime
or the product of a large set of small primes,
and then recognise the coefficients as elements of~$\QQ$.
For this final step to have a unique solution,
one needs to know an upper bound $b$
on the absolute value of the coefficient
(given by a crude low-precision complex analytic computation).
Suppose $N > 2b\Den{}$ is coprime to~$\Den{}$,
and suppose that a coefficient $c$ is
computed modulo~$N$, so we know $a=(c\mod N)\in\ZZ/N\ZZ$.
Then take the unique representative $r\in \ZZ$ 
of $a\Den{}\in\ZZ/N\ZZ$ with 
$|r|\leq b\Den{}$. The coefficient is $c=r/\Den{}\in\QQ$.
\end{remark}

\subsubsection{Implementation details}
\label{ssec:detailslv}

All fields $K$ in our tables, except for
the field $[257, 23, 68]$, satisfy
$\mathcal{O}_K=\mathcal{O}_{K_0}[\eta]$ for some $\eta\in K$.
For those fields, we use the bound
of \cite[Theorem 2.1]{lauter-viray-denominators}.
See \cite[Proof of Theorem~9.1]{yang}
for how exactly this applies to
Igusa class polynomials, where only the
constant coefficient $H_{K,1}$
is mentioned, though the proof
applies to all coefficients
of $H_{K,1}$ and $\widehat{H}_{K,n}$.

We used the obvious and straightforward way to
evaluate all the numbers occurring on the right
hand side of \cite[Theorem 2.1]{lauter-viray-denominators},
except for~$\mathcal{J}=\mathcal{J}(d_uf_u^{-2}, d_x, t)$.
For the number~$\mathcal{J}$, which counts solutions
to a ring embedding problem, we used~$0$
whenever \cite[Theorem 2.4]{lauter-viray-denominators} proves
it is~$0$, and we used the upper bound of
\cite[numbered displayed formula
in Theorem 2.4]{lauter-viray-denominators} otherwise.
These bounds turned out to be small enough
so that it took only a few hours to compute
all class polynomials.

For the field $K=[257, 23, 68]$, we chose ten different
$\eta\in \mathcal{O}_K$ such that
$I_\eta = [\mathcal{O}_K:\mathcal{O}_{K_0}[\eta]]$ is coprime
to all primes $p\leq D/4$,
where $D$ is the discriminant of~$K_0$. For each $\eta$
and each $\ell\nmid I_\eta$, we computed
the bound of \cite[Theorem 2.3]{lauter-viray-denominators}
on the $\ell$-valuation of the denominator
(and took $\infty$ as upper bound at $\ell\mid I_\eta$).
Then for each $\ell$, we took the minimum over all~$\eta$
of this valuation bound.
Finally, we sharpened the valuation bounds
further using
Goren and Lauter~\cite{gorenlautertoappear}.
This final bound took a little over half an hour
to compute, but was then small enough for our class
polynomial computation to finish within half an hour.
Indeed, the index $I_{\eta}$ had to be $> \Den/4$, which
made the bounds of \cite{lauter-viray-denominators}
hard to compute and far from sharp in this case.
We were advised afterwards by Kristin Lauter
that we did not have to exclude all primes $\leq \Den/4$,
and that \cite[Theorem 2.3]{lauter-viray-denominators}
also holds if one only avoids the primes dividing
the numbers~$\delta$ in their formulas.

It would be useful to have a fast algorithm for
computing~$\mathcal{J}$, rather than only bounds.
Fortunately, for our purposes, the bounds were
good enough.

\section{Mestre's algorithm}
\label{sec:mestre}

At this point, we have a number field~$k$ and
Igusa invariants in this number field,
and we wish to decide whether there is a
curve of genus two over $k$ with those Igusa invariants,
and if so, compute any model of the form $y^2=f(x)$
of that curve with $f\in k[x]$.
This is done by Mestre's algorithm,
which we will explain in this section.
Nothing in this section is new, and our reference
for this section is Mestre~\cite{mestre}.
Note that we do not care about the size of the
coefficients of $f$ yet, as long as we can compute it.
Reducing its size is Section~\ref{sec:reduction}.

Let $k$ be any field of characteristic not $2$,
$3$, or $5$.
Let $$\mathcal{M}_2(\kbar) = \{(x_2,x_4,x_6,x_{10})\in\kbar^4 
\mid x_{10}\not=0\} / k^*,$$
where $\lambda\in k^*$ acts by a 
weighted scaling
$\lambda(x_2,x_4,x_6,x_{10}) = (\lambda^2x_2,\lambda^4x_4,
\lambda^6x_6,\lambda^{10}x_{10})$.
We say that a point $x\in\mathcal{M}_2(\kbar)$
is defined over $k$ if 
$x\in \mathcal{M}_2(\overline{k})$
is stable under the action of $\Gal(\overline{k}/k)$.
One can show (using Hilbert's Theorem 90)
that this condition is satisfied if and only
if $x$ is the equivalence class
of a quadruple with for all $n\in k$, $x_n\in k$.
The \emph{field of moduli} $k_0$ of $C/\kbar$ is
smallest field over which
the point $x=(I_n(C))_n\in\mathcal{M}_2(\kbar)$ is defined.
We say that a field $l\subset\overline{k}$ is a \emph{field of definition}
for $C$ if there exists a curve $D/l$ with $D_{\overline{k}}\cong C$.

Unlike the elliptic case,
there is no simple formula for $C$ given~$(I_n(C))_n$,
and~$C$ cannot always
be defined over its field of moduli.
There does exist an algorithm, due to Mestre~\cite{mestre},
that finds a model for~$C$ given~$x$,
but it involves solving a conic, which
is not always possible without extending the field.
When it is possible to solve a conic over the base field,
then it usually introduces large numbers, so
that the output polynomial may have coefficients that
are much too large to be practical.

In more detail, Mestre's algorithm works as follows.
First of all, assume that the curve $C$ with $x=(I_n(C))_n$
does not have any automorphisms other than the hyperelliptic
involution $\iota:(x,y)\mapsto (x,-y)$.
(If it does, then use the construction of Cardona and
Quer~\cite{cardona-quer} instead
of Mestre's.)
From the coordinates $x_n$ in the field of moduli~$k_0$, one constructs
homogeneous ternary forms $Q=Q_x$ and $T=T_x\in k_0[U,V,W]$
of degrees $2$ and~$3$
(for equations, see~\cite{mestre} or~\cite{our-trac-ticket}).
Let $M_x\subset \PP^2$ be the conic defined by $Q$.
If $M_x$ has a point over a field~$k\supset k_0$,
then this gives rise to a parametrisation
$\varphi : \PP^1\rightarrow M_x$ over~$k$.
Let $\varphi^*:k[U,V,W]\rightarrow k[X,Z]$ be the
ring homomorphism inducing this parametrisation.
We get a hyperelliptic curve $C_{\varphi} : Y^2 = \varphi^*(T)$,
i.e., $C_{\varphi}:y^2 = T(\varphi(x:1))$.
The curve $C_{\varphi}$
is a double cover of $\PP^1$, ramified at the six points of~$\PP^1$
that map (under $\varphi$) to the six zeroes of $T_x$ on~$M_x$.
\begin{theorem}[{Mestre~\cite{mestre}}]
  Given $x\in\mathcal{M}_2(k)$, assume that the curve
  $C/\kbar$ with $x=(I_n(C))_n$ 
  satisfies $\Aut(C)=\{1,\iota\}$.
\begin{enumerate}
 \item If $M_x(k)=\emptyset$, then $C$ has no model over $k$.
\item If $M_x(k)\not=\emptyset$, then $C_{\varphi}/k$ as above is a model of~$C$.
\end{enumerate}
\end{theorem}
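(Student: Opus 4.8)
The plan is to establish both parts of Mestre's theorem by carefully tracking how the Igusa–Clebsch invariants behave under the construction $x \mapsto (Q_x, T_x) \mapsto C_\varphi$, and by using the fact (recalled in Section~\ref{sec:mestre}) that $\mathcal{M}_2$ is a coarse moduli space away from characteristic $2,3,5$. The central claim to verify is that the curve $C_\varphi : Y^2 = \varphi^*(T_x)$, whenever it can be formed, has Igusa–Clebsch invariants lying in the weighted-projective class $x$; given that, part (2) follows immediately since two curves with $\Aut = \{1,\iota\}$ and equal invariant-points are $\kbar$-isomorphic, and the model $C_\varphi$ is visibly defined over $k$ because $\varphi$ and $T_x$ are.

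**First I would** recall the covariant nature of Mestre's forms: $Q_x$ and $T_x$ are built from the coordinates $x_n$ by explicit formulas that are equivariant for the weighted $k^*$-action, so that replacing $x$ by $\lambda \cdot x$ replaces $(Q_x, T_x)$ by $(\lambda^? Q_x, \lambda^? T_x)$ up to the relevant weights; hence the conic $M_x \subset \PP^2$ and the degree-six form $T_x|_{M_x}$ depend only on the class of $x$ in $\mathcal{M}_2$. Next I would use the classical fact underlying Mestre's construction — that for a genus-two curve $C$ with only the hyperelliptic automorphism, the six Weierstrass points, viewed via the canonical embedding of the associated conic, recover exactly the pair $(M_x, T_x|_{M_x})$ up to the $\mathrm{PGL}_2$-action — to conclude that when $M_x(k) \ne \emptyset$, any parametrisation $\varphi$ yields a sextic $\varphi^*(T_x)$ whose roots are the six pulled-back branch points, so $C_\varphi$ is a genus-two curve with $(I_n(C_\varphi))_n$ in the class $x$. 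Then part (2) is done.

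**For part (1)**, I would argue contrapositively: suppose $C$ has a model $C'$ over $k$ with $(I_n(C'))_n = x$. Running Mestre's construction intrinsically on $C'/k$ produces the same conic $M_x$ (by the covariance just established), but now equipped with a distinguished $k$-rational structure coming from $C'$ itself — concretely, the conic associated to $C'$ is the one parametrising the relevant pencil of quadrics/the canonical conic attached to the Weierstrass divisor, and it carries a $k$-point because $C'$ is defined over $k$. Since this conic is isomorphic over $k$ to $M_x$, we get $M_x(k) \ne \emptyset$, contradicting the hypothesis. The only delicate point is to make precise that ``the conic attached to $C'$ over $k$'' really is $k$-isomorphic to $M_x$ and not merely $\kbar$-isomorphic; this is exactly where the assumption $\Aut(C) = \{1,\iota\}$ is used, since it forces the moduli interpretation to be rigid enough that no extra twisting can occur.

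**The hard part will be** this last rigidity/descent argument — nailing down that the obstruction to defining $C$ over $k$ is captured \emph{precisely} by $M_x(k)$ and not by some further cohomological class. I would handle it by invoking the interpretation of $M_x$ as (a form of) the Brauer–Severi variety attached to the field-of-moduli-versus-field-of-definition obstruction for $C$: for a curve with automorphism group exactly $\{1,\iota\}$, Mestre shows the relevant obstruction in $H^2(\Gal(\kbar/k), \kbar^*)$ is the class of the conic $M_x$, which vanishes iff $M_x(k) \ne \emptyset$. Rather than reprove this from scratch, I would cite Mestre~\cite{mestre} for the identification of the obstruction with $[M_x]$ and simply assemble the two halves. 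The remaining verifications — that $\varphi^*(T_x)$ has degree $5$ or $6$ with distinct roots (non-vanishing of the discriminant, which follows from $I_{10} \ne 0$ in the class $x$), and that $C_\varphi$ is smooth of genus two — are routine and I would not grind through them.
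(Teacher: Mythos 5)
The paper does not actually prove this statement: it is quoted as Mestre's theorem, with the construction of $Q_x$, $T_x$, $M_x$ and $C_{\varphi}$ described and the result attributed wholesale to \cite{mestre}, so there is no internal proof to compare you against. Your sketch reproduces the shape of Mestre's original argument --- covariance of $(Q_x,T_x)$ under the weighted scaling so that $M_x$ and $T_x|_{M_x}$ depend only on the class $x$, pull-back of $T_x$ along a $k$-parametrisation of $M_x$ to obtain a sextic whose associated curve has invariants in the class $x$ (part 2), and identification of the class of the conic $M_x$ in $\mathrm{Br}(k)[2]$ with the field-of-moduli-versus-field-of-definition obstruction (part 1). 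However, at both genuinely hard points --- that $C_{\varphi}$ really has invariants $x$, and that for $\Aut(C)=\{1,\iota\}$ the obstruction to a $k$-model is \emph{exactly} $[M_x]$ --- you explicitly fall back on citing \cite{mestre}; since those two facts together \emph{are} the theorem, what you have written is a correct road map of Mestre's proof rather than an independent proof. That puts you on the same footing as the paper itself, which also simply cites Mestre, so nothing you say is wrong; but be aware that a self-contained argument would still require (i) the Clebsch covariant identities showing that the six zeroes of $T_x$ on $M_x$ pull back to the branch locus of a genus-two curve with the prescribed invariants, and (ii) the descent step, where $\Aut(C)=\{1,\iota\}$ is used via $H^1(k,\{1,\iota\})$ and its coboundary into $H^2(k,\kbar^*)$ to see that the only cohomological obstruction is the quaternionic class represented by the conic, which vanishes precisely when $M_x(k)\not=\emptyset$. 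Your part (1) paragraph gestures at this but does not carry it out, and the claim that the conic attached to a $k$-model ``carries a $k$-point because $C'$ is defined over $k$'' is itself a consequence of the covariant construction, not an immediate observation.
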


We use Magma~\cite{magma} to solve conics over number fields
and we contributed our Sage implementation of Mestre's algorithm
to Sage~\cite{Sage}, where it is available (as of version 5.13)
through the command \verb!HyperellipticCurve_from_invariants!.

There are by the way many quadratic extensions $l\supset k$
over which it \emph{is} possible to solve the conic:
simply choose all but one of the coordinates for the conic point
at will
and solve for the remaining coordinate, which yields a conic
point over a quadratic extension $l\supset k$.

\section{Reduction}
\label{sec:reduction}

In the previous section we described Mestre's algorithm
for finding models of genus-two curves over
number fields~$k$.
However, these hyperelliptic models in practice have coefficients of hundreds of digits.
In this section we 
describe how the make hyperelliptic curve equations over~$k$ smaller.
We start by explaining the relation between twists of hyperelliptic curves
and an action of $\GL_2(k)\times k^*$ on binary forms.
The rest of the section then is about
$(\GL_2(k)\times k^*)$-reduction of binary forms,
and our algorithm consists of two parts:
\begin{enumerate}
\item Making a binary form integral with discriminant of small norm
(Section~\ref{ssec:discriminantreduction}).
\item Making the heights of the coefficients small
by $(\GL_2(\mathcal{O}_k)\times \mathcal{O}_k^*)$-transformations,
which preserve integrality and
affect the discriminant only by units (Section~\ref{ssec:stollcremona}).
\end{enumerate}

We give the reduction algorithm for binary forms
of general degree~$n$, though it only
applies to hyperelliptic curves in the case that~$n$ is even
and~$\geq 6$.

\subsection{Isomorphisms and twists}
\label{ssec:isomandtwist}

Fix an integer $n\geq 3$ and a field~$k$,
and let
$H_n(k)$
be the set of separable binary forms
of degree $n$ in $k[X,Z]$.
We interpret $F(X,Z)\in H_n(k)$ also as the pair
$(n,f(x))$, where $f(x)=F(x,1)\in k[x]$
is a polynomial of degree $n$ or $n-1$.
In the case where $n$ is even and $\geq 6$,
let $g=(n-2)/2$ and interpret~$F$
as the hyperelliptic curve $C=C_f=C_F$
of genus~$g$
given by the affine equation $y^2 = f(x)$.
We can also write~$C$ as the smooth curve given by
$Y^2 = F(X,Z)$ in weighted 
projective space $\PP^{(1,g+1,1)}$.

Given any element of~$H_{2g+2}(k)$,
we would like to find an isomorphic
hyperelliptic curve with coefficients
of small height,
so first we determine when two hyperelliptic
curves are isomorphic.
\newcommand{\matabcd}{\left(\begin{array}{cc} a & b\\ c & d\end{array}\right)}

Note the natural right group actions
of scaling and substitution for any~$n$,
\begin{align*}
H_n(k)&\rightaction k^* & :&\quad (F(X,Z),u)\mapsto uF(X,Z),\quad\mbox{and}\\
H_n(k)&\rightaction \GL_2(k)& :&\quad 
 (F(X,Z),A)\mapsto F(A\cdot(X,Z)),
\end{align*}
which together induce an action of $\GL_2(k)\times k^*$
on~$H_n(k)$.

In terms of the polynomial $f(x) = F(x,1)\in k[x]$,
the action is
$$f(x) \cdot \left[ \left(\begin{array}{cc} a & b\\ c & d\end{array}\right), u\right]
= u\ (cx+d)^{n} f\left(\frac{ax+b}{cx+d}\right).$$

Note that a hyperelliptic curve $C$ always has the identity
automorphism and the
\emph{hyperelliptic involution}
$\iota : C\rightarrow C: (x,y)\mapsto (x,-y)$.
We will often assume that these are the only
automorphisms.
\begin{proposition}\label{prop:twist}
Given any two $F$, ${\fnew}\in H_{2g+2}(k)$, 
assume $\mathrm{Aut}((C_F)_{\kbar})=\{1,\iota\}$.
Then $C_F$ and~$C_{\fnew}$ are isomorphic over~$\kbar$
if and only if $F$ and~$\fnew$
are in the same orbit under
$\GL_2(k)\times k^*$.
\end{proposition}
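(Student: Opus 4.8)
The plan is to prove the two directions separately, with the "if" direction being essentially a computation and the "only if" direction requiring the automorphism hypothesis. First I would set up the key dictionary: a point on $\PP^1$ with coordinates $(X:Z)$ and the double cover $Y^2 = F(X,Z)$ in $\PP^{(1,g+1,1)}$, so that an element $(A,u)\in\GL_2(k)\times k^*$ acts by the substitution $(X,Z)\mapsto A\cdot(X,Z)$ together with the rescaling $Y\mapsto \sqrt{u}\,Y$ on a suitable quadratic extension (or more carefully, by identifying $C_F$ with $C_{\fnew}$ where $\fnew = F\cdot[A,u]$ via $(X,Z,Y)\mapsto (A\cdot(X,Z), \sqrt{u}\,Y)$). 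This makes the "if" direction transparent: if $\fnew$ and $F$ lie in the same $\GL_2(k)\times k^*$-orbit, the explicit substitution above is an isomorphism $C_F\to C_{\fnew}$ defined over $\kbar$ (the $\sqrt{u}$ is why we only claim an isomorphism over $\kbar$, not over $k$), so $(C_F)_{\kbar}\cong (C_{\fnew})_{\kbar}$.

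For the "only if" direction I would argue as follows. Suppose $\psi\colon (C_F)_{\kbar}\xrightarrow{\sim}(C_{\fnew})_{\kbar}$ is an isomorphism. Each of $C_F$ and $C_{\fnew}$ is a genus-$g$ hyperelliptic curve, so it has a unique hyperelliptic map to $\PP^1$ (the canonical map composed with the Veronese, or equivalently the quotient by the hyperelliptic involution); uniqueness of this $g^1_2$ (for $g\geq 2$, which holds since $n\geq 6$) means $\psi$ descends to an automorphism $\bar\psi$ of $\PP^1$, i.e. an element $A\in\PGL_2(\kbar)$, making the square with the two hyperelliptic maps commute. Then $\psi$ is determined up to the hyperelliptic involution by $A$, and pulling back $Y^2 = \fnew(X,Z)$ through $A$ shows that $F$ and $(cx+d)^n\fnew((ax+b)/(cx+d))$ differ by a square factor in $\kbar$ — but both are separable of the same degree pattern, so they differ by a nonzero constant $v\in\kbar^*$, giving $F = \fnew\cdot[A,v]$ over $\kbar$. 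The remaining task is to descend $A$ and $v$ from $\kbar$ to $k$: this is where the hypothesis $\Aut((C_F)_{\kbar})=\{1,\iota\}$ enters. For any $\sigma\in\Gal(\kbar/k)$, both $(A,v)$ and $(A^\sigma,v^\sigma)$ transform the $k$-rational form $F$ to the $k$-rational form $\fnew$, so $(A^\sigma,v^\sigma)\cdot(A,v)^{-1}$ stabilises $F$, hence corresponds to an automorphism of $(C_F)_{\kbar}$; by hypothesis this automorphism is $1$ or $\iota$, and in either case it acts trivially on the pair $(A,v)$ modulo the ambiguity we have already quotiented out (the sign ambiguity in $v$ is exactly the $\iota$). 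Therefore $(A,v)$, viewed as an element of $\big(\GL_2(\kbar)\times\kbar^*\big)/\langle -1\rangle$ acting on forms, is $\Gal(\kbar/k)$-invariant, and a short Galois-descent / Hilbert 90 argument lets us choose a representative $(A_0,v_0)\in\GL_2(k)\times k^*$ in the same orbit-class. Then $F$ and $\fnew$ lie in one $\GL_2(k)\times k^*$-orbit, as claimed.

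The main obstacle is the descent step at the end: one has to be careful that the action of $\GL_2(k)\times k^*$ on $H_n(k)$ factors through a quotient (the kernel of the action contains at least the pairs $(\lambda I, \lambda^{-n})$ and, for the curve interpretation, the sign $(-I,(-1)^n)$ coming from $\iota$), and the Galois-cohomological argument must be run on the correct quotient group so that triviality of $\Aut$ genuinely forces descent rather than merely forcing agreement in $\PGL_2$. Concretely I would (i) identify the stabiliser in $\GL_2(\kbar)\times\kbar^*$ of a separable $F$ with $\Aut((C_F)_{\kbar})$ times the scalar kernel — this is the place where $\Aut=\{1,\iota\}$ is used and where one sees the claim would fail for curves with extra automorphisms, exactly as the parenthetical remark about Cardona–Quer anticipates; and (ii) invoke the vanishing of $H^1$ of the relevant algebraic group ($\GL_2$, resp. $\mathbf{G}_m$, both of which have trivial Galois cohomology over a field) to produce the $k$-rational representative. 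All of the form-manipulation in between — checking that the substitution-plus-scaling formula for the action on $f(x)$ matches the geometric action, that separability is preserved, and that degree drops are accounted for by the $c=0$ versus $c\neq 0$ cases — is routine and I would not spell it out in full.
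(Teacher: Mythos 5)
Your proposal is correct in substance but follows a genuinely different route from the paper. The paper's proof is two citations combined: (a) the standard fact (Cassels--Flynn) that $C_F\cong C_{\fnew}$ \emph{over $k$} if and only if $F$ and $\fnew$ are in one $\GL_2(k)\times(k^*)^2$-orbit, and (b) the classification of twists (Hindry--Silverman): since $\Aut((C_F)_{\kbar})=\{1,\iota\}$, the $\kbar$-isomorphic curves over $k$ are, up to $k$-isomorphism, exactly the quadratic twists, parametrised by $H^1(k,\{1,\iota\})=k^*/(k^*)^2$; combining the two gives the full $\GL_2(k)\times k^*$-orbit. You instead prove the ``only if'' direction from scratch: uniqueness of the degree-two map to $\PP^1$ (valid since $g\geq 2$) produces $(A,v)\in\GL_2(\kbar)\times\kbar^*$ with $F=\fnew\cdot[A,v]$, and a Galois cocycle plus Hilbert 90 descends the pair to $k$. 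This buys a self-contained argument that shows exactly where the hypothesis enters (it forces the stabiliser of $F$ to be as small as possible), at the cost of the bookkeeping you yourself flag as the crux; the paper's version is shorter but hides the same cohomology inside the cited twist classification. One point in your sketch needs to be stated correctly for the descent to close: the hyperelliptic involution corresponds to the \emph{trivial} element of $\GL_2\times k^*$ (an isomorphism with matrix $A$ and scaling $w$ on $y$ acts on forms through $w^2$, so changing the sign of $w$ is invisible), and the stabiliser of a separable $F$ in $\GL_2(\kbar)\times\kbar^*$ is an extension of the \emph{reduced} group $\Aut((C_F)_{\kbar})/\langle\iota\rangle$ by the scalar kernel $\{(\mu\,\mathrm{id}_2,\mu^{-n})\}\cong\mathbf{G}_m$ --- not ``$\Aut$ times the scalar kernel''. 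Under your hypothesis the reduced group is trivial, so the cocycle $\sigma\mapsto[A,v][A^\sigma,v^\sigma]^{-1}$ lands in that central $\mathbf{G}_m$, the class of $(A,v)$ in the quotient is Galois-invariant, and $H^1(k,\mathbf{G}_m)=1$ lifts it to $\GL_2(k)\times k^*$; by contrast, if one quotients by a sign $\langle-1\rangle$ as in your middle paragraph (treating $\iota$ as the sign of $v$ rather than of its square root), the obstruction group would be $H^1(k,\mu_2)=k^*/(k^*)^2\neq 1$ and the argument would not conclude. Your final paragraph's prescription (correct quotient, vanishing of $H^1$ for $\mathbf{G}_m$) is the right repair, so the proof goes through once written with that stabiliser.
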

\begin{proof}
It is a standard result (see e.g.~\cite[p.~1]{cassels-flynn}
for the case of genus two) that two hyperelliptic 
curves $C_F$ and $C_{\fnew}$ in $H_n(k)$
are isomorphic over~$k$
if and only if they are in the same
orbit under $\GL_2(k)\times (k^*)^2$.
Using $\mathrm{Aut}(C_{\kbar})=\{1, \iota\}$,
  we get (see e.g.~\cite[Example~C.5.1]{hindry-silverman})
  that all twists, up to isomorphisms over $k$,
  are given by the action
  of $H^1(k, \{1,\iota\}) = k^*/k^{*2} = \{1\}\times (k^*/k^{*2})$.
\end{proof}

\begin{remark}\label{rem:isom}
If $\fnew=F\cdot [({a \atop c}{b\atop d}), v^2]$,
then an isomorphism $C_{\fnew}\rightarrow C_{F}$
is given by
$(x,y)\rightarrow (\frac{ax+b}{cx+d}, v^{-1}(cx+d)^{-g-1} y)$.
\end{remark}

By Proposition~\ref{prop:twist}, finding small-height
models over $k$
of hyperelliptic curves $C/k$
with $\mathrm{Aut}(C_{\kbar})=\{1,\iota\}$
is equivalent to finding small elements of $\GL_2(k)\times k^*$-orbits
of binary forms of even degree~$\geq 6$.
Lemma~\ref{lem:aut} in Section~\ref{ssec:theory}
will show that the hypothesis $\mathrm{Aut}(C_{\kbar})=\{1,\iota\}$
is satisfied for the curves we deal with,
except for one curve for which we do not need a reduction
algorithm.
If $\mathrm{Aut}(C_{\kbar})\not=\{1,\iota\}$,
then $\GL_2(k)\times k^*$-actions may be too restrictive, but
by Remark~\ref{rem:isom}, they do always give valid twists.

Our goal for the remainder of Section~\ref{sec:reduction}
is, given a binary form $F\in H_n(k)$, to find
a $\GL_2(k)\times k^*$-equivalent form
with small coefficients.
We start with computing a discriminant-minimal form in
Section~\ref{ssec:discriminantreduction},
followed by discriminant-preserving $\GL_2(\mathcal{O}_k)\times\mathcal{O}_k^*$-reduction
in Section~\ref{ssec:stollcremona}.

\subsection{Reduction of the discriminant}
\label{ssec:discriminantreduction}

Given a
binary form $F(X,Z)\in k[X,Z]$ of any degree~$n\geq 3$, we wish to
find a $\GL_2(k)\times k^*$-equivalent form with minimal discriminant.
First we recall that the 
discriminant
of a separable binary form
$$F(X,Z) = \prod_{i=1}^{n} (\gamma_i X - \alpha_i Z)\in k[X,Z]$$
with $\alpha_i,\gamma_i\in\kbar$
is $$\Delta(F) = \prod_{i<j} (\gamma_j\alpha_i-\gamma_i\alpha_j)^2\in k^*.$$

In terms of the polynomial $f=F(x,1)$ of degree $n$ or~$n-1$
with leading coefficient~$c$,
this is
$$\Delta(F) = \left\{ \begin{array}{rl}
 \Delta(f)&\qquad \mbox{if }\deg f = n,\\
 c^2 \Delta(f) &\qquad \mbox{if }\deg f = n-1.
\end{array}\right.$$

Let $g\in\ZZ$ be given by $n=2g+2$ if $n$ is even
and $n=2g+3$ if $n$ is odd. 
If~$n$ is even and~$\geq 6$, then~$F$ corresponds to a hyperelliptic
curve $C_F$ of genus $g$ with
$$\Delta(C_F) = 2^{4g} \Delta(F).$$
If $n$ is odd, then there is no interpretation in terms
of hyperelliptic curves and the number~$g$ is
simply a convenient number in the algorithms and proofs.

The discriminant changes under the action of the group $\GL_2(k)\times k^*$ via
\begin{equation}
\label{eq:disc}
\Delta(F\cdot [A,u]) = u^{2(n-1)} \det(A)^{n(n-1)} \Delta(F).
\end{equation}

\begin{remark}\label{rem:disc}
In case $n=6$, the Igusa invariants
of Section~\ref{ssec:icp} satisfy
$I_{10}(C) = 2^{12} \Delta(C) = 2^{20}\Delta(F)$ and
$$I_{j}(C_{F\cdot [A,u]}) = u^{j} \det(A)^{3j} I_{j}(C_F).$$
\end{remark}

Before we describe how to reduce the discriminant
globally over a number field,
we first describe how to reduce the discriminant
at just one prime.

\subsubsection{Local reduction of the discriminant}
\label{ssec:reductiondiscriminantlocal}

Assume for now that $k$ is the field of fractions
of a discrete valuation ring $R$
with valuation~$v$. Let $\pi$ be a uniformiser of~$v$
and $\mathfrak{m}=\pi R$ the maximal ideal.

We call $F$ \emph{minimal at $v$}
if $v(\Delta(F))$ is minimal among
all $\GL_2(k)\times k^*$-equivalent forms
with $v$-integral coefficients.

\begin{proposition}\label{prop:local}
Suppose $F\in H_n(k)$ has coefficients in~$R$.
Let $g=\lfloor n/2\rfloor -1$ be the largest integer smaller than or equal to
$(n-2)/2$, so $n\in \{2g+2, 2g+3\}$.
Then $F$ is \emph{non}-minimal at~$v$
if and only if
we are in one of the following three cases.
\begin{enumerate}
\item The polynomial $F$ is not primitive,
so $\fnew = F\cdot[\mathrm{id}_2, \pi^{-1}]$ is integral
and satisfies $v(\Delta({\fnew}))<v(\Delta(F))$.
\item The polynomial $(F(x,1)\ \text{mod}\ \mathfrak{m})$
has a $(g+2)$-fold root~$\overline{t}$ in the residue field.
Moreover, for \emph{some} (equivalently \emph{every})
lift $t\in R$ of~$\overline{t}$,
the form $\fnew = F\cdot [({\pi\atop 0} {t\atop 1}), \pi^{-(g+2)}]
=F(\pi X+tZ, Z)\pi^{-(g+2)}$ is integral
and satisfies $v(\Delta(\fnew)) < v(\Delta(F))$.
\item The polynomial $(F(x,1)\ \text{mod}\ \mathfrak{m})$
has degree~$\leq n-(g+2)$.
Moreover, the form $\fnew = F\cdot [({1\atop 0} { 0\atop  \pi}), \pi^{-(g+2)}] = \pi^{-(g+2)}F(X, \pi Z)$ is integral
and satisfies $v(\Delta({\fnew})) < v(\Delta(F))$.
\end{enumerate}
\end{proposition}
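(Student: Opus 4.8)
The plan is to reinterpret minimality via the Bruhat--Tits tree of $\GL_2$ over $R$ and to exploit a convexity property of the discriminant valuation along that tree, which forces non-minimality to be witnessed by a single step; the three cases of the statement are then exactly the possible single steps, together with the ``rescaling'' degeneracy of case~(1). The ``if'' direction is immediate: in case~(1), $\fnew=\pi^{-1}F$ is integral because $F$ is not primitive, and $v(\Delta(\fnew))=v(\Delta(F))-2(n-1)<v(\Delta(F))$ by~\eqref{eq:disc}; in cases~(2) and~(3) the displayed $\fnew$ is integral by hypothesis, and~\eqref{eq:disc} with $\det=\pi$ and $u=\pi^{-(g+2)}$ gives $v(\Delta(\fnew))=n(n-1)-2(n-1)(g+2)+v(\Delta(F))$, which is $<v(\Delta(F))$ since $g+2=\lfloor n/2\rfloor+1>n/2$. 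So $F$ is non-minimal in each case. (The clause ``has a $(g+2)$-fold root'' will turn out to follow from the integrality asserted right after it, and is really there to restrict the search in the other direction.)

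For ``only if'', suppose $F$ is non-minimal; if $F$ is not primitive we are in case~(1), so assume $F$ primitive. I would identify the vertices of the tree $\mathcal T$ with homothety classes $[\Lambda]$ of $R$-lattices in $k^2$, attach to each $[\Lambda]$ the form $F_\Lambda$ gotten by applying an element of $\GL_2(k)\times k^*$ carrying $R^2$ to $\Lambda$ and then rescaling to be primitive and integral (well defined up to a unit), and set $d([\Lambda])=v(\Delta(F_\Lambda))$. Then $d([R^2])=v(\Delta(F))$, and since a primitive integral form has the least discriminant valuation in its $k^*$-rescaling class, $F$ is minimal iff $d$ attains its minimum at $[R^2]$. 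Writing $F_\Lambda=\prod_{i=1}^{n}(\gamma_iX-\alpha_iZ)$ with $(\alpha_i,\gamma_i)\in\OO_{\kbar}^2$ primitive (the product of primitive linear forms is primitive, by Gauss's lemma for the Gauss valuation), the discriminant formula of the excerpt gives $d([\Lambda])=2\sum_{i<j}v(\gamma_j\alpha_i-\gamma_i\alpha_j)$, and each summand equals the distance in $\mathcal T$ (with ends $\PP^1(\kbar)$) from $[\Lambda]$ to the bi-infinite geodesic joining the $i$-th and $j$-th roots of $F$. Distance to a geodesic is a convex function on a tree, so $d$ is convex.

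A convex function on a tree that is not globally minimal at a vertex must strictly decrease along some edge out of that vertex. The edges out of $[R^2]$ are in bijection with $\PP^1(\kappa)$: the edge towards $\bar t\in\kappa$ is realised by $\left(\begin{smallmatrix}\pi&t\\0&1\end{smallmatrix}\right)$ for any lift $t$, and the edge towards $\infty$ by $\left(\begin{smallmatrix}1&0\\0&\pi\end{smallmatrix}\right)$. For the edge towards $\bar t$, $F_{\Lambda'}=\pi^{-m}F(\pi X+tZ,Z)$, where $m$ is the largest integer with $\pi^{-m}F(\pi X+tZ,Z)$ integral, and~\eqref{eq:disc} gives $d([\Lambda'])=n(n-1)-2(n-1)m+d([R^2])$; so $d([\Lambda'])<d([R^2])$ forces $m>n/2$, i.e. $m\geq g+2$. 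Hence $\fnew=\pi^{-(g+2)}F(\pi X+tZ,Z)$ is integral (one divides the primitive form $\pi^{-m}F(\pi X+tZ,Z)$ by a non-negative power of $\pi$) and has $v(\Delta(\fnew))<v(\Delta(F))$; and $m\geq g+2$ means $v(f^{(l)}(t)/l!)\geq g+2-l\geq 1$ for $l\leq g+1$, so the bottom $g+2$ Taylor coefficients of $\bar f$ at $\bar t$ vanish, i.e. $\bar t$ is a $(g+2)$-fold root. That is case~(2); independence of the lift is clear, as replacing $t$ by $t+\pi s$ is the integral substitution $x\mapsto x+s$, which preserves integrality and the discriminant. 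The edge towards $\infty$ gives case~(3) by the identical computation, the condition $m\geq g+2$ now reading $v(a_i)\geq g+2-i$ for all $i$, whence $\deg\bar f\leq n-(g+2)$.

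The main obstacle is the convexity of $d$: concretely, showing that $v(\gamma_j\alpha_i-\gamma_i\alpha_j)$, for primitive representatives adapted to $[\Lambda]$, is the claimed tree distance --- equivalently, proving directly that $v(\Delta(F_\Lambda))$ admits no strict local minimum on $\mathcal T$. Everything after that is bookkeeping with~\eqref{eq:disc} and the inequality $g+2>n/2$. If one wishes to avoid the tree language, the same fact can be reached by a Newton-polygon argument: the Cartan decomposition reduces an arbitrary discriminant-lowering transformation to a diagonal one $\left(\begin{smallmatrix}\pi^c&0\\0&1\end{smallmatrix}\right)$, and one then checks that if such a transformation with $c\geq 2$ lowers $v(\Delta)$ then so does the corresponding $c=1$ transformation (after a $\GL_2(R)$-adjustment) --- but this is the convexity argument unwound by hand.
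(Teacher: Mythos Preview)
Your proposal is correct and takes a genuinely different route from the paper's proof.

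The paper argues elementarily: given a discriminant-lowering $[A,u]$, it first uses the central subgroup $\{[\mu\mathrm{id}_2,\mu^{-n}]\}$ to make the entries of $A$ coprime in $R$, then right-multiplies by $\GL_2(R)\times R^*$ (elementary column operations) to force $A$ into one of two triangular shapes with $\pi$-powers on the diagonal. A direct coefficient computation, using the inequality $2m>n(k+l)$ coming from~\eqref{eq:disc}, then pins down case~(1), (2), or~(3). The ``every lift'' clause in case~(2) is handled exactly as you do, by the $\GL_2(R)$-substitution $\left(\begin{smallmatrix}1&y\\0&1\end{smallmatrix}\right)$.

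Your approach via the Bruhat--Tits tree is more conceptual: the primitive integral representatives in the orbit are parametrised by vertices, the discriminant valuation factors through the tree, and writing $\Delta$ as a product over pairs of roots exhibits $d$ as a sum of distances to geodesics, hence convex. Convexity explains \emph{why} a single edge-step witnesses non-minimality, and the argument would adapt to other invariants with a similar product structure. The cost is the overhead of setting up the tree over a splitting field and verifying the distance interpretation of $v(\gamma_j\alpha_i-\gamma_i\alpha_j)$ (which you rightly flag as the main point to justify; it follows by the one-step computation you sketch, iterated). Your closing remark that the Cartan/Newton-polygon alternative is ``the convexity argument unwound by hand'' is apt: that is essentially what the paper does, staying entirely over $R$ with no tree machinery.
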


\begin{proof}
\nocite{liu-local-hyperelliptic}
For the ``if'' part, note that 
in each of the three cases, the proposition
gives an explicit equivalent form
that proves that~$F$ is not minimal.

Conversely, suppose that~$F$ is non-minimal.
Then there exists $[A,u]\in\GL_2(k)\times k^*$ with $F\cdot [A,u]$ integral
of smaller discriminant.
Write \[A = \left(\begin{array}{cc} a & b \\ c & d\end{array}\right).\]

Let $T$ be the subgroup $T=\{[\mu \mathrm{id}_2, \mu^{-n}] : \mu\in k^*\}$ of the centre of $\GL_2(k)\times k^*$,
and note that~$T$
acts trivially on~$H_n(k)$, so without loss of generality $A$ has coprime coefficients in~$R$,
so either (i) $c\in R^*$ or $d\in R^*$ or (ii) $c\equiv d\equiv 0\ \mathrm{mod}\ \pi$ and $a$ or $b$ is in $R^*$.

Note also that $\GL(R)\times R^*$ preserves integrality and the discriminant,
so we use multiplication by $\GL(R)$ on the right to perform elementary column operations over~$R$ on~$A$.
We get that without loss of generality either
(i) $d=1$, $c=0$ or (ii) $a=1$, $b=0$, $c\equiv d\equiv 0\ \mathrm{mod}\ \pi$.

Note that in both cases $a\not=0$ and $c\not=0$, so
with more $\GL(R)\times R^*$-multiplication, we get $a=\pi^k$, $d=\pi^l$, $u=\pi^{-m}$
with $k$, $l$, $m \in\ZZ$, $k$, $l\geq 0$, and by equation~\eqref{eq:disc} also
\begin{equation}
\label{eq:disc2}
2m>n(k+l).
\end{equation}

We start with case (i). 

Let $H(X,Z) = F(X+bZ, Z)$ and write $H(X,Z) = \sum_i h_i X^iZ^{n-i}$.
Then $F\cdot [A,u] = \pi^{-m} H(\pi^k X,Z)$ is integral, so
$v(h_i) \geq m - ki.$
Together with~\eqref{eq:disc2}, this gives
$$v(h_i) > \left(\frac{n}{2}-i\right)k.$$
In particular, if $k=0$, then~$H$ is integral and non-primitive,
hence so is~$F(X,Z) = H(X-bZ,Z)$ and we are in case~1.

\newcommand{\smalltbt}[4]{({#1 \atop #3}{#2 \atop #4})}
\newcommand{\tbt}[4]{\left(\begin{array}{cc} #1 & #2 \\ #3 & #4\end{array}\right)}
If $k\geq 1$, then for all $i$, we have
$v(h_i)> \frac{n}{2}-i$, hence $v(h_i) > \lfloor n/2\rfloor -i = g+1-i$,
so $v(h_i)\geq g+2-i$.
In particular, the form
$F\cdot [\smalltbt{\pi}{b}{0}{1}, \pi^{-(g+2)}] =
 H\cdot [\smalltbt{\pi}{0}{0}{1}, \pi^{-(g+2)}]$
is integral, and of strictly smaller discriminant than~$F$.
This proves that we are in case~2 for \emph{some} lift $t=b$
of a $(g+2)$-fold root $\overline{t}=\overline{b}$.
To finish the proof of case~2, we need to prove that
for \emph{every} $t'$ satisfying $\overline{t'}=\overline{b}$,
the transformation
$[\smalltbt{\pi}{t'}{0}{1}, \pi^{-(g+2)}]$
also gives an integral equation.

Let $y = (t'-b)/\pi\in\mathcal{O}_k$ and note
$$\tbt{\pi}{t'}{0}{1} = \tbt{\pi}{b}{0}{1} \tbt{1}{y}{0}{1}
\in
\tbt{\pi}{b}{0}{1}\GL_2(R)
,$$
which proves that we are in case~2 for \emph{every} lift~$t$.
This finishes case (i).

Now assume that we are in case~(ii).
Equation~\eqref{eq:disc2} gives $m>\frac{n}{2}\geq g+1$.

Write $F=\sum_{i=0}^{n} f_i X^iZ^{n-i}$.
We will prove by induction that $v(f_j) \geq j + g + 2 - n$ holds
for all~$j$, which implies that
$F(X,\pi Z)\pi^{-(g+2)}$ is integral,
so we are in case~3.
Note that the assertion is
trivial for $j\leq n-g-2$. Now suppose that it is true for all $j<J$.

Note that $F\cdot [A,u] = \pi^{-m} F(X, cX+dZ)$ is integral,
so modulo $\pi^{g+2}$, we get
$0\equiv \sum_{i=0}^{n} f_i X^i (cX+dZ)^{n-i} \equiv \sum_{i=J}^{n} f_i X^i (cX+dZ)^{n-i}$.
Looking at the coefficient of $X^JZ^{n-J}$, we get
$f_J d^{n-J}\equiv 0\ \mathrm{mod}\ \pi^{g+2}$, so
$\pi^{g+2-n+J}\mid f_J$.
This finishes the proof.
\end{proof}

We use Proposition~\ref{prop:local} to create the following reduction algorithm.

\begin{algorithm}[Local Reduction]\label{alg:loc}~\\
\textbf{Input}: A binary form $F\in H_n(k)\cap R[X,Y]$ and a prime element $\pi\in R$.\\
\textbf{Output}: A binary form $\fnew$ that is $\GL_2(k)\times k^*$-equivalent and minimal
at $\mathrm{ord}_\pi$.\\
First let $g=\lfloor n/2\rfloor -1$.
\begin{enumerate}
\item If $F\ \text{mod}\ \pi R$
is zero, then repeat the algorithm with $\fnew = F\cdot[\mathrm{id}_2, \pi^{-1}]$.
(This corresponds to case~\ref{prop:local}.1.)
\item If  $F(x,1)\ \text{mod}\ \pi R$ has degree~$\leq n-(g+2)$,
then let $\fnew = F(X, \pi Z)\pi^{-(g+2)}$.
If $\fnew$ is integral, then repeat the algorithm with $\fnew$.
(This corresponds to case~\ref{prop:local}.3.)
\item
Factor $\overline{f}=(f\ \mathrm{mod}\ \pi)$ over the finite $R/\pi R$.
If $\overline{f}$ has a root $\overline{t}$
of multiplicity $\geq g+2$,
then let $t$ be a lift of $\overline{t}$ to~$R$.
If $F^\dagger = F(\pi X+tZ, Z)\pi^{-(g+2)}$
is integral, then repeat the algorithm with~$\fnew$.
(This corresponds to case~\ref{prop:local}.2.)
\item Return $F$.
\end{enumerate}
\end{algorithm}

\begin{proof}[Proof of correctness of Algorithm~\ref{alg:loc}]
Every step of the algorithm leaves the model integral,
and every iteration reduces $v(\Delta(F))$,
so the algorithm terminates.
It therefore suffices to prove that the output
is not in any of the three cases of Proposition~\ref{prop:local}.

In case~1, the algorithm reduces the discriminant in step~1
and starts over. In case~3, the same happens with step~2,
and in case~2, it happens with step~3
because a polynomial of degree $\leq  2g+3$
has at most one $(g+2)$-fold root~$\overline{t}$.
\end{proof}
In many cases, we can do step~3 as follows without having to think
about factoring of polynomials.
\begin{lemma}\label{lem:changestep3}
If $\pi$ is coprime to~$n!$, then
step~3 can be replaced by the following.
\begin{itemize}
\item[3'.] Let $f=F(x,1)$, calculate $\gcd(f,f',f'',\dots,f^{(g+1)})$ over the finite field $R/\pi R$,
and write it as $\sum_{i=0}^s a_i x^s$ with $a_s\not=0$.
 If $s>0$, then let $t$ be such that $t\equiv -a_{s-1}/(sa_s )\ \text{mod}~\pi R$.
If $F^\dagger = F(\pi X+tZ, Z)\pi^{-(g+2)}$
is integral, then repeat the algorithm with~$\fnew$.
\end{itemize}
\end{lemma}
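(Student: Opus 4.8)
The plan is to verify that, granting $\pi\nmid n!$, step~3' performs exactly the same operation as step~3 of Algorithm~\ref{alg:loc}: both test whether $\bar f:=(f\bmod\pi)$ has a root of multiplicity $\geq g+2$ in the residue field $\kappa=R/\pi R$, and, when such a root $\bar t$ exists, both lift it to some $t\in R$ and apply the same transformation $F^\dagger=F(\pi X+tZ,Z)\pi^{-(g+2)}$. Since $\pi\nmid n!$, the finite field $\kappa$ has characteristic $p$ with $p>n\geq\deg\bar f$, so we are in the ``large characteristic'' regime in which root multiplicities are detected by derivatives.

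First I would record the standard fact: for $h\in\kappa[x]$ with $\deg h<p$ and any $m\geq 1$, a root $\alpha\in\overline{\kappa}$ of $h$ of multiplicity $e$ is a common zero of $h,h',\dots,h^{(m-1)}$ if and only if $e\geq m$. Indeed, writing $h=(x-\alpha)^e u$ with $u(\alpha)\neq0$, one checks inductively that $h^{(i)}$ has multiplicity exactly $e-i$ at $\alpha$ for $0\le i<e$, using that $e,e-1,\dots$ are nonzero in $\kappa$ because they are at most $\deg h<p$. Hence $d:=\gcd(h,h',\dots,h^{(m-1)})$, which divides $h$, has as its roots precisely the roots of $h$ of multiplicity $\geq m$; in particular $d$ is non-constant if and only if $h$ has such a root.

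Next I would apply this with $h=\bar f$ and $m=g+2$. We may assume $\bar f\ne0$, as otherwise step~1 of the algorithm applies; and $\deg\bar f\leq n\leq 2g+3<2(g+2)$, so $\bar f$ has at most one root of multiplicity $\geq g+2$. If $\bar t$ is such a root then its minimal polynomial over $\kappa$ is linear, since a factor of degree $\geq2$ occurring with multiplicity $\geq g+2$ would force $\deg\bar f\geq 2g+4$; thus $\bar t\in\kappa$. Consequently $\gcd(\bar f,\bar f',\dots,\bar f^{(g+1)})$ is either constant (no such root) or equals $c(x-\bar t)^s$ for some $c\in\kappa^*$ and $s\geq1$. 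This shows that ``$s>0$'' in step~3' is equivalent to the hypothesis of step~3. Finally, when $s>0$, expanding $c(x-\bar t)^s=\sum_i a_i x^i$ gives $a_{s-1}=-s\,\bar t\,a_s$, and since $1\le s\le n<p$ the scalar $s$ is invertible in $\kappa$, so $\bar t=-a_{s-1}/(s a_s)$ is exactly the element produced in step~3'. Therefore step~3' lifts the same $\bar t$, runs the same integrality check on $F^\dagger$, and recurses under the same conditions, so it may replace step~3.

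The only point genuinely requiring care — and the reason the hypothesis $\pi\nmid n!$ appears — is the derivative criterion for multiplicity: over a residue field of small characteristic a polynomial such as $x^p$ has a multiple root invisible to its derivatives, so the $\gcd$ in step~3' would miss it; the bound $p>n\ge\deg\bar f$ rules this out, and also guarantees that the scalar $s$ in the final step is invertible. Everything else is elementary degree counting and Vieta's formula.
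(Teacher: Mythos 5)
Your argument is correct and follows essentially the same route as the paper's proof: the derivative test for root multiplicity (valid because the residue characteristic exceeds $n\geq\deg\overline{f}$, i.e.\ $\pi\nmid n!$), the observation that the gcd is consequently a power of $(x-\overline{t})$ for the unique root $\overline{t}$ of multiplicity $\geq g+2$, and the coefficient identity $a_{s-1}=-s\,\overline{t}\,a_s$ recovering $\overline{t}=-a_{s-1}/(sa_s)$. Your additional remarks (that $\overline{t}$ lies in the residue field and that $s>0$ holds exactly when step~3 would trigger) merely make explicit what the paper leaves implicit, so no substantive difference remains.
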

\begin{proof}
It suffices to show that if $\overline{f}$ has a root $\overline{t}$
of multiplicity $\geq g+2$,
then it is equal to $(-a_{s-1}/(sa_s)\ \mathrm{mod}\ \pi R)$.

Let $a$ be a root of exact multiplicity~$m$ of~$\overline{f}$ over
the algebraic closure of $R/\pi R$,
that is, we have $\overline{f} = (x-a)^m g(x)$ with
$g(a)\not=0$.
Then the $i$-th derivative $\overline{f}^{(i)}$ for $i\leq m$
is $$\frac{m!}{(m-i)!} (x-a)^{m-i}g(x)\qquad \mbox{modulo}\qquad(x-a)^{m-i+1}.$$
In particular, $(x-a)$ is a factor of 
$\gcd(\overline{f}, \overline{f}',\ldots, \overline{f}^{(m-1)})$,
but not of $\overline{f}^{(m)}$.
Here we use that $m!$ is coprime to~$\pi$.

It follows that only the (unique) root of multiplicity
$\geq g+2$ appears in $\gcd(\overline{f},\overline{f}',\ldots, \overline{f}^{(g+1)})$,
that is, we get $\overline{f} = a_s (x-\overline{t})^s$,
hence $a_{s-1} = -s\overline{t}a_s$, so
$\overline{t} = -a_{s-1}/(sa_s)$.
\end{proof}

\subsubsection{Global reduction of the discriminant}
\label{ssec:reductiondiscriminantglobal}

Now let us get back to the case where $k$ is a number field with
ring of integers $\mathcal{O}_k$. We prefer to have a binary form $F$
where $v(\Delta(F))$ is minimal for all discrete valuations $v$ of $k$.

If $k$ has class number one, then such a form exists. 
Indeed, if we take~$\pi$ in Algorithm~\ref{alg:loc} to be a generator
 of the prime ideal corresponding to~$v$, then this affects only~$v$
 and no other valuations, so we can do this for each~$v$ separately.
See Section~\ref{ssec:classnumber}
for what to do if the class group is non-trivial.

To be able to use our local reduction algorithm one prime at a time,
we need to know
the valuations $v$ for which $v(\Delta(F))$ is non-minimal. The most
straightforward method is to factor~$\Delta(F)$.
However, factorisation is computationally hard, so we will give some tricks
for trying to avoid factorisation below.
We needed to use a combination of sophisticated factorisation software and
the tricks below for creating our tables. Indeed, on the one hand,
without the tricks below,
even the state-of-the-art factorisation software left us unable to
reduce a couple of the curves.
On the other hand, when just using the tricks below and the built-in
factorisation functionality of pari-gp~\cite{pari} (through Sage~\cite{Sage}),
there are some curves that we were still unable to reduce.
Only the combination of factoring software and the tricks below allowed
us to complete the table.

For serious factoring, we combined the built-in
implementation of Pollard's rho method
and the elliptic curve method of Magma~\cite{magma},
the GMP-ECM implementation of the elliptic curve method~\cite{GMPECM}, 
and the CADO-NFS implementation of the number field sieve~\cite{CADONFS}.

The method for avoiding factorisation is based on the following fact.

\begin{proposition}\label{prop:alg}
Let $\mathfrak{a} = \pi\mathcal{O}_{k}$ be any (possibly non-prime) principal ideal
in a number field~$k$.
Modify Algorithm~\ref{alg:loc} as follows.
\begin{enumerate}
\item Whenever testing whether an element
$b$ of $\mathcal{O}_k$ is zero modulo $\pi^j \mathcal{O}_k=\mathfrak{a}^j$
or whether an element $b/\pi^j\in k$ is integral
(in Steps 1, 2, and~3), compute
$\mathfrak{d}_i = \gcd(b\mathcal{O}_k, \mathfrak{a}^i)$ for $i=1,\ldots j-1$.
If there exists an $i$ with $\mathfrak{d}_i\not\in\{\mathfrak{a}^{i-1}, \mathfrak{a}^i\}$,
then for the smallest such $i$ output the non-trivial factor $\mathfrak{d}_i/\mathfrak{a}^{i-1}$ of~$\mathfrak{a}$.
\item Replace step~3 with step~3' of Lemma~\ref{lem:changestep3}
regardless of whether $\pi$ is coprime to $n!$.
Compute $\gcd$s of polynomials in $\mathcal{O}_k/\mathfrak{a}$ using Euclid's algorithm. For each division
with remainder by a polynomial $g$, first compute the gcd of the leading coefficient of $g$ with $\mathfrak{a}$
as in item~1.
\end{enumerate}
Then all steps of Algorithm~\ref{alg:loc} are
 polynomial-time computable and the output is either a polynomial
$\fnew$ equivalent to~$F$ with $\Delta(\fnew)\mid \Delta(F)$ 
or a non-trivial factor of~$\mathfrak{a}$.
Moreover, if~$\mathfrak{a}$ is 
square-free and coprime to $n!$
and the algorithm runs without returning
a factor of~$\mathfrak{a}$, then the output polynomial~$\fnew$
is minimal at all primes dividing~$\mathfrak{a}$.
\end{proposition}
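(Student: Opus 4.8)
The plan is to prove the three assertions separately: polynomial-time computability together with termination; \emph{soundness} of the output, meaning that a returned ideal is a genuine proper nontrivial factor of $\mathfrak{a}$ and a returned form $\fnew$ is $\GL_2(k)\times k^*$-equivalent to $F$, is integral, and satisfies $\Delta(\fnew)\mid\Delta(F)$; and finally, under the extra hypotheses that $\mathfrak{a}$ is square-free and coprime to $n!$, minimality of a returned form at every prime dividing $\mathfrak{a}$. The guiding idea is the Chinese Remainder decomposition $\mathcal{O}_k/\mathfrak{a}\cong\prod_{\mathfrak{p}\mid\mathfrak{a}}\FF_\mathfrak{p}$, valid when $\mathfrak{a}$ is square-free: in this ring an element is a unit iff its ideal gcd with $\mathfrak{a}$ is trivial, and a genuine zero divisor is precisely an element whose gcd with $\mathfrak{a}$ is a proper nontrivial factor. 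I will read the modification of item~1 as applying to \emph{every} divisibility or vanishing test in Algorithm~\ref{alg:loc} --- the primitivity test of Step~1, the degree tests of Steps~2 and~3, and the integrality certifications --- always taking the relevant gcd with a power of $\mathfrak{a}$ and returning the appropriate proper factor whenever a divisibility holds at some but not all primes dividing $\mathfrak{a}$.

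For polynomial time: each pass through the steps performs $O(1)$ ideal gcds in $\mathcal{O}_k$, $O(1)$ applications of the $\GL_2$-action, and one Euclidean computation on polynomials of degree $\le n$ over $\mathcal{O}_k/\mathfrak{a}$ with leading-coefficient unit checks, all polynomial time in the bit-sizes of $F$, $\pi$, and $k$. For termination and the divisibility $\Delta(\fnew)\mid\Delta(F)$: every reduction step replaces the current (integral) form by $F\cdot[A,\pi^{-j}]$ with $A\in\Mat_2(\mathcal{O}_k)$, $\det A\in\{1,\pi\}$, and only after integrality of the new form has been certified through item~1; equation~\eqref{eq:disc} then shows $\Delta$ is multiplied by $\pi^{-c}$ with $c\in\{n-1,2(n-1)\}\ge 1$, so the discriminant of the new integral form is a \emph{proper} divisor in $\mathcal{O}_k$ of the old one. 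Since there are only finitely many ideals between $\Delta(F)\mathcal{O}_k$ and $\mathcal{O}_k$, the number of iterations is $O(\log\abs{N_{k/\QQ}\Delta(F)})$, and the final form is equivalent to $F$ with $\Delta(\fnew)\mid\Delta(F)$; none of this uses the extra hypotheses. That a returned ideal is a proper nontrivial factor follows by unwinding $\mathfrak{d}_i=b\mathcal{O}_k+\mathfrak{a}^i$ for the least $i$ with $\mathfrak{d}_i\notin\{\mathfrak{a}^{i-1},\mathfrak{a}^i\}$: minimality of $i$ forces $\mathfrak{a}^{i-1}\mid\mathfrak{d}_i$, so $\mathfrak{d}_i/\mathfrak{a}^{i-1}$ is integral, and the two exclusions say it is neither $\mathcal{O}_k$ nor $\mathfrak{a}$; the same applies to a gcd returned from a leading coefficient in item~2.

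The heart is the minimality claim, which I would prove by contradiction. Assume $\mathfrak{a}$ square-free and coprime to $n!$, and that the algorithm returns a form $\fnew$ without ever returning a factor. The output comes from Step~4, so in its final pass $\fnew$ triggered neither a reduction nor a factor in Steps~1,~2,~3; I claim this forces $\fnew$ into none of the three cases of Proposition~\ref{prop:local} at any $\mathfrak{p}\mid\mathfrak{a}$, which by that proposition gives minimality. If $\fnew$ were non-primitive at $\mathfrak{p}$, its content ideal $\mathfrak{c}$ would have $\mathfrak{p}\mid\gcd(\mathfrak{c},\mathfrak{a})$; then either $\gcd(\mathfrak{c},\mathfrak{a})=\mathfrak{a}$ and Step~1 would have reduced, or it is a proper nontrivial factor and Step~1 would have returned it --- both contradictions. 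Case~\ref{prop:local}.3 is handled identically with the ideal $I$ generated by the top $g+1$ coefficients of $\fnew$ in place of $\mathfrak{c}$: if $\mathfrak{a}\mid I$ then the hypothesis of Step~2 holds at every $\mathfrak{p}\mid\mathfrak{a}$, so $\fnew(X,\pi Z)\pi^{-(g+2)}$ is integral by Proposition~\ref{prop:local}.3 and Step~2 would have reduced; otherwise $\gcd(I,\mathfrak{a})$ is a proper nontrivial factor. Finally suppose $\fnew(x,1)\bmod\mathfrak{p}$ has a $(g+2)$-fold root. By the proof of Lemma~\ref{lem:changestep3}, $\gcd(\overline f,\dots,\overline f^{(g+1)})$ over $\FF_\mathfrak{p}$ is then nonconstant; since (by the no-factor assumption) every division in the Euclidean computation of this gcd over $\mathcal{O}_k/\mathfrak{a}$ was by a polynomial with unit leading coefficient, that computation commutes with reduction modulo each $\mathfrak{p}\mid\mathfrak{a}$, so the gcd over $\mathcal{O}_k/\mathfrak{a}$ has some degree $s\ge 1$, its leading coefficient and $sa_s$ are units modulo $\mathfrak{a}$ (again by the no-factor assumption, after taking gcds with $\mathfrak{a}$), hence its reduction modulo any $\mathfrak{p}'\mid\mathfrak{a}$ stays of degree $s$. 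Using the Lemma~\ref{lem:changestep3} direction once more, a nonconstant derivative-gcd modulo $\mathfrak{p}'$ can only arise from a $(g+2)$-fold root, which the lemma identifies as $t\bmod\mathfrak{p}'$ for the $t$ computed in Step~3'; thus $\fnew(\pi X+tZ,Z)\pi^{-(g+2)}$ is integral at every $\mathfrak{p}'\mid\mathfrak{a}$, hence integral, and Step~3 would have reduced --- contradiction. This exhausts the cases.

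I expect the main obstacle to be exactly the last point: showing that the polynomial gcd computed over the non-domain $\mathcal{O}_k/\mathfrak{a}$ reduces faithfully, prime by prime, to the gcd over each residue field, and that the leading-coefficient unit checks of item~2 (together with a final check that $sa_s$ is a unit) are precisely what guarantees this, so that Step~3' behaves at each $\mathfrak{p}\mid\mathfrak{a}$ as Lemma~\ref{lem:changestep3} requires and never produces a ``spurious'' value $t$. A secondary, bookkeeping-flavoured obstacle is the one flagged above: making the item~1 modification thorough enough --- covering the primitivity test of Step~1 and the degree tests of Steps~2 and~3, not only the integrality certifications --- so that any divergence in behaviour between two primes dividing $\mathfrak{a}$ surfaces as a detectable zero divisor.
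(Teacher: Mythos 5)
Your overall strategy is the paper's: reduce to Proposition~\ref{prop:local} one prime of $\mathfrak{a}$ at a time, and exploit the dichotomy ``the relevant condition holds uniformly at all primes dividing $\mathfrak{a}$ (so the step reduces) versus it holds at some primes and fails at others (so a gcd test surfaces a zero divisor, i.e.\ a non-trivial factor)''. Your broad reading of modification~1 (so that the primitivity and degree tests also surface discrepancies) matches what the paper's own proof tacitly assumes, and your argument that Euclid's algorithm over $\mathcal{O}_k/\mathfrak{a}$ commutes with reduction modulo each $\mathfrak{p}\mid\mathfrak{a}$ once all leading coefficients encountered are units is a welcome elaboration of a point the paper glosses over.

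There is, however, a genuine flaw in your handling of cases 2 and 3 of Proposition~\ref{prop:local}. You infer that $\fnew(\pi X+tZ,Z)\pi^{-(g+2)}$ (resp.\ $\fnew(X,\pi Z)\pi^{-(g+2)}$) is integral at \emph{every} $\mathfrak{p}'\mid\mathfrak{a}$ from the mere existence of a $(g+2)$-fold root of $\fnew \bmod \mathfrak{p}'$ (resp.\ from the degree drop modulo $\mathfrak{p}'$). That implication is false: writing $\fnew(X+tZ,Z)=\sum_i h_iX^iZ^{n-i}$, a $(g+2)$-fold root at $t$ modulo $\mathfrak{p}'$ only gives $v_{\mathfrak{p}'}(h_i)\geq 1$ for $i\leq g+1$, whereas integrality of $\fnew(\pi X+tZ,Z)\pi^{-(g+2)}$ requires $v_{\mathfrak{p}'}(h_i)\geq g+2-i$, which is strictly stronger for $i<g+1$ (similarly for the degree test in case 3). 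Indeed, in Proposition~\ref{prop:local} the integrality of the transformed form is an \emph{additional} part of each case, not a consequence of the root or degree condition; you are entitled to it at the prime $\mathfrak{p}$ where non-minimality is assumed, but not at the other primes dividing $\mathfrak{a}$. The conclusion is still reachable with the tools you already set up, but the uniformity-or-factor reasoning must be applied to the \emph{integrality tests} in Steps 2 and~3 rather than to the preliminary root/degree tests: under the no-factor assumption, the item-1 gcd checks force each divisibility $\pi^{g+2-i}\mid h_i$ to hold at all primes dividing $\mathfrak{a}$ or at none, so either the transformed form is integral and the step reduces, or integrality fails at $\mathfrak{p}$ as well, contradicting that $\mathfrak{p}$ is in case 2 (resp.\ 3). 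This is precisely how the paper's dichotomy (``all primes dividing $\mathfrak{a}$ are as in Proposition~\ref{prop:local}.$i$'' versus ``$\mathfrak{p}$ is and $\mathfrak{q}$ is not'') sidesteps the issue, since each case of Proposition~\ref{prop:local} there is taken to include the integrality statement.
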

\begin{proof}
Since the leading coefficient of a polynomial over $\mathcal{O}_k$ is either invertible
modulo $\mathfrak{a}$ or has a non-trivial factor in common with $\mathfrak{a}$,
division with remainder either works or provides such a non-trivial factor.
This proves the first assertion in Proposition~\ref{prop:alg}.

Next suppose that~$\mathfrak{a}$ is square-free
and coprime to~$n!$
and let~$F$ be as in Algorithm~\ref{alg:loc}.
If~$F$ is minimal at all primes dividing~$\mathfrak{a}$,
then we are done.
If there is an $i\in\{1,2,3\}$ such that
all primes dividing~$\mathfrak{a}$
are as in Proposition~\ref{prop:local}.$i$,
then the corresponding step (1, 3' or 2) in Algorithm~\ref{alg:loc} reduces the discriminant of~$F$
and we start over with a new~$F$.

So without loss of generality,
there are $i\in\{1,2,3\}$ and primes $\mathfrak{p},\mathfrak{q}\mid \mathfrak{a}$
such that $\mathfrak{p}$ is as in Proposition~\ref{prop:local}.$i$ and $\mathfrak{q}$ is not.
But then the corresponding step (1, 3' or 2)
in Algorithm~\ref{alg:loc}
returns a non-trivial factor of~$\mathfrak{a}$.
\end{proof}

Based on Proposition~\ref{prop:alg}, we get the following algorithm that tries to minimise
the amount of factoring.

\begin{algorithm}\label{alg:glo}~\\
\textbf{Input}: A binary form $F\in H_n(k)$ for a number field $k$ of class number one.\\
\textbf{Output}: A binary form  $\fnew$ 
that is integral, is $\GL_2(k)\times k^*$-equivalent to $F$,
and has minimal discriminant.
\begin{enumerate}
\item Let $\mathfrak{a} = \Delta(F)\mathcal{O}_k$ and $A=\{\mathfrak{a}\}$.
\item If the unit ideal is in $A$, remove it from $A$.
If $A$ is empty, return~$F$.
\item For each $\mathfrak{a}\in A$, test if $\mathfrak{a}$ is a perfect power and replace
it by its highest-power root.
\item Fix $B\in\ZZ$ with $B\geq n$
and apply trial division up to $B$ to each element of~$A$
to find a small prime factor $\mathfrak{p}=(\pi)$.
If no prime is found, go to Step~5.
If a prime is found, 
then reduce the form
locally using Algorithm~\ref{alg:loc} on $\mathfrak{p}$, remove all factors~$\mathfrak{p}$
from all elements of~$A$, and go to step~2.
\item For each $\mathfrak{a}\in A$, run Algorithm~\ref{alg:loc} on $\mathfrak{a}$ 
with the modifications of Proposition~\ref{prop:alg}.
\begin{enumerate}
\item If it returns a non-trivial factor~$\mathfrak{b}$
of~$\mathfrak{a}$, then replace $\mathfrak{a}$ in $A$ by $\mathfrak{b}$ and $\mathfrak{a}/\mathfrak{b}$
and go to step~3.
\item If it returns a binary form $F^\dagger\not=F$,
then replace all $\mathfrak{a}\in A$ by
$\mathfrak{a}+\Delta({F^\dagger})\mathcal{O}_k$, replace $F$ by $F^\dagger$,
and go to step 2.
\item If it returns $F$,
then go to the next $\mathfrak{a}$ in~$A$.
\end{enumerate}
\item Go to step~4 with a strictly larger trial division bound~$B$
(or more sophisticated factoring methods).
\end{enumerate}
\end{algorithm}
Let us first show that this algorithm terminates in finite time and returns
a minimal form.
For minimality of the form, note that at every step in the algorithm, all primes at which~$F$ is non-minimal
divide some element of~$A$, and the algorithm terminates only if $A$ is empty.
To see that the algorithm ends, note that the norm $N = N_{k/\QQ}(\Delta(F))$ never increases,
while at every iteration either $N\in\ZZ$ decreases or $B\in\ZZ$ increases,
so at some point we have $B>N$ after which a repeated application of step~4 finishes the algorithm.

\begin{remark}
There is no way to completely avoid factoring. Indeed, if one can compute the twist-minimal
model of the hyperelliptic curve 
$$y^2 = N^2 x^6+x+1\quad\mbox{where $N=pq^2$ with $p,q$ prime},$$
then one can also factor the integer $N=pq^2$.
\end{remark}

\begin{remark}
In the genus-two case (that is, $n=6$) we can replace $\Delta(F)\mathcal{O}_k$ in the algorithm
by the ideal $\gcd(I_2(C_F),I_4(C_F),I_6(C_F),\Delta(F))$,
where $I_2, I_4, I_6$ are the Igusa-Clebsch invariants from Section~\ref{sec:invcmmain}.
Indeed, we have that $I_2$, $I_4$, and~$I_6$ satisfy the transformation formula of Remark~\ref{rem:disc},
so all primes at which the model is non-minimal divide this gcd.
The advantage is that this ideal is smaller than~$\Delta(F)$, which speeds up the algorithm.
\end{remark}

\begin{remark}
All of the above works if one wants a hyperelliptic curve
model that is 
isomorphic over $\overline{k}$, but not necessarily
over~$k$.
To get a minimal model of $C_F$ that is isomorphic over~$k$,
one could do the following.
First reduce~$F$ as above, and do some bookkeeping to find not only a
twist-reduced model $C_{F^\dagger}/k$, but also $[A,u]\in\GL_2(k)\times k^*$
with $F^\dagger = F\cdot [A,u]$ and some information on the factorisation of~$u$.
Then all one needs is a minimal element $v\in u(k^*)^2\cap \mathcal{O}_k$,
because $C_{vF^\dagger}$ is then a minimal model.
Such an element $v$ exists if $k$ has class number one, and can then be found
easily if one is able to factor~$u\mathcal{O}_k$.
\end{remark}

\subsubsection{Class number $>1$}
\label{ssec:classnumber}

Everything in Section~\ref{ssec:reductiondiscriminantglobal}
was under the assumption that $k$ had class number one, and hence a global
minimal form exists. If $k$ does not have class number one, then this is not always possible.
 Indeed, let $F_v$ be a $\GL_2(k)\times k^*$-equivalent binary
form with $v(\Delta(F_v))$ minimal, and let 
$\Delta_\mathrm{min}$ be the ideal with $v(\Delta_{\mathrm{min}})=v(\Delta(F_v))$ for all~$v$. 
If $\Delta_{\mathrm{min}}$ is not principal, then there is no form
 with that discriminant. In fact, if $F$ is any form, and there exists 
a globally minimal equivalent form $F_{\mathrm{min}}$ with 
$\Delta(F_{\mathrm{min}})=\Delta_{\mathrm{min}}$, then the ideal 
$\sqrt[\gcd(n,2)(n-1)]{\Delta(F)/\Delta_{\mathrm{min}}}$ is a principal ideal.

So instead of a globally reduced form, we look for an almost-reduced form.
Let $S$ be a (small) set of (small) prime ideals that generate the class group.
It is easy to change the methods above into an algorithm that finds a
form that is reduced outside~$S$. We now give the details of the algorithm that we used for this,
which also makes the form reasonably simple at the primes of~$S$.

Let $T$ be any set of prime ideals that generate the class group
and $\mathfrak{a}$ an ideal supported outside~$T$.
In Algorithm~\ref{alg:loc}, to reduce at~$\mathfrak{a}$ and stay reduced outside of~$T$, we
do the following.
Take $\pi_u\in \mathfrak{a}$
and $\pi_l^{-1}\in\mathfrak{a}^{-1}$ such that $\pi_u/\mathfrak{a}$ and
$\mathfrak{a}/\pi_l$
are supported on~$T$.
Then in Algorithm~\ref{alg:loc} replace the formulas
for $\fnew$ in cases 1, 2, 3 with
\begin{equation}\label{eq:nonmin}
\pi_l^{-1}F(X,Z),\quad  F(X/\pi_l, Z)\pi_u^{n-(g+2)}, \quad\mbox{and}\quad F(\pi_u X+t Z, Z)\pi_l^{-(g+2)}
\end{equation}
respectively, 
where we make sure that $t$ is divisible by~$\pi_u/\mathfrak{a}$.
Note that this gives integral forms, and worsens the discriminant only at~$T$.

Our algorithm starts by taking $T$ disjoint from~$S$.
First reduce at all primes of~$S$, possibly worsening at~$T$.
Then take $T=S$ and reduce outside of~$S$, possibly worsening at~$S$.

Since we had a minimal form at the primes of~$S$,
the only non-minimality of the form at this stage is 
what was introduced by~\eqref{eq:nonmin}.
In particular, it can be removed by transformations of the form~$a^{-1}b^g F(b^{-1}X,Z)$.
So we take $a,b\in\mathcal{O}_k$ with $a^2b^{n-2g}$ of maximal norm such that $a^{-1}b^g F(b^{-1}X,Z)$ is integral.
Note that no hard factoring is required in finding $a$ and $b$ since they are
supported on the set of primes~$S$.

We did the above for the field $K=\QQ[X] / (X^4+46X^2+257)$
(denoted $[17,46,257]$ in~\cite{echidna}).
We used $S=\{\mathfrak{p}\}$ for a (non-principal) prime $\mathfrak{p}$ of norm $2$ in
the quadratic field $K_0^r=\QQ(\sqrt{257})$, which has class group
of order~$3$.

\subsection{Reduction of coefficients: Stoll-Cremona reduction}
\label{ssec:stollcremona}

At this point, we have an integral form~$F\in H_n(k)$ 
where the norm $N(\Delta(F))$ is small. 
Next, we try to make the coefficients small.
As we do not want to break integrality or disturb the discriminant,
we take transformations in $(\GL_2(\mathcal{O}_k)\times\mathcal{O}_k^*)$.

We use a notion of `reduced' based on Stoll
and Cremona~\cite{stoll-cremona}. We do not prove
that this notion of `reduced' yields small coefficients,
but in practice it does.

\subsubsection{The case $k=\QQ$}

Stoll and Cremona~\cite[Definition 4.3]{stoll-cremona} give a definition of \emph{reduced} 
for binary forms of degree $\geq 3$ over~$\QQ$ under the 
action of~$\SL_2(\ZZ)\times 1$, which we will summarise here.

Recall that $H_n(k)$ is the set of
separable binary forms $F(X,Y)$ of degree~$n$.
Let $\mathcal{H}=\{z\in\CC : \mathrm{Im}(z)>0\}$ be the complex upper half plane. 
We turn the standard left $\GL_2(\RR)^{+}$-action on $\mathcal{H}$ into a right action 
by $$z\cdot A = A^{-1}(z) = \frac{dz-b}{-cz+a}$$ for $A=({a \atop c}{b\atop d})$. 

The idea behind~\cite{stoll-cremona} is to use an $\SL_2(\RR)$-covariant map $z:H_n(\RR)\rightarrow \mathcal{H}$. 
In~$\mathcal{H}$, there is a notion of $\SL_2(\ZZ)$-reduction, 
and we just pull back that notion to $H_n(\QQ)$ via~$z$. 
In other words, we have the following definition.
\begin{definition}\label{def:reduced}
We call $F\in H_n(\QQ)$ \emph{reduced for $\SL_2(\ZZ)$} 
if $z(F) = z = x+iy$ satisfies
\begin{itemize}
\item[(R)] $|x|\leq \frac{1}{2}$, and
\item[(M)] $|z|\geq 1$.
\end{itemize}
\end{definition}
This gives rise to the following algorithm.
\begin{algorithm} (Stoll-Cremona reduction)\\
\textbf{Input:} $F\in H_n(\QQ)$\\
\textbf{Output:} an $\SL_2(\ZZ)$-reduced
element of the orbit $F\cdot (\SL_2(\ZZ)\times 1)$.
\begin{enumerate}
\item Let $m$ be the integer nearest to $x=\mathrm{Re}(z(F))$ and let
$F\leftarrow F\cdot({1\atop 0}{m\atop 1}) = F(X+mZ,Z)$.\\
This replaces $z(F)$ with $({1\atop 0}{-m\atop \phantom{-}1})z(F) = z(F)-m$, which satisfies (R) above.
\item If $|z(F)|<1$,
then let
$F\leftarrow F\cdot ({\phantom{-}0\atop -1}{1\atop 0})=F(Z,-X)$ and go back to step~1. This replaces $z(F)$ with $({0\atop 1}{-1\atop 0})z(F) = -1/z(F)$,
which satisfies (M) above.
\end{enumerate}
\end{algorithm}

Stoll and Cremona~\cite[after Proposition 4.4]{stoll-cremona} outline how one could
extend the definition of reduced to binary forms
over any number field $k$ under the action of~$\SL_2(\mathcal{O}_k)\times 1$.
We work out the details in the case of 
a totally real field,
and give an implementation and an improvement.

To generalise the algorithm, we need two ingredients: a covariant map,
and a reduction algorithm on the codomain of that map.

\subsubsection{The covariant for totally real fields}

Let~$k$ be a totally real number field of degree~$d$ and
 let $\phi_1,\ldots,\phi_d$ be the $d$ embeddings $k\rightarrow \RR$. 
This induces embeddings $k\rightarrow \RR^d$, $H_n(k)\rightarrow H_n(\RR)^d$ and $\SL_2(k)\rightarrow \SL_2(\RR)^d$,
which we will use implicitly.
Composing with the covariant map~$z$ on every component, we get 
a map $H_n(k)\rightarrow \HH^d$, which is $\SL_2(k)$-covariant
and which we also denote by~$z$.
\begin{remark}
The quotient space $\SL_2(\mathcal{O}_k)\backslash \HH^d$ is coincidentally
the Hilbert moduli space of polarised
abelian $d$-folds with real multiplication by~$\mathcal{O}_k$
and a certain polarisation type.
\end{remark}

In fact, we can do slightly better. We identify $\mathcal{H}$ with 
$(\CC\setminus \RR)$ modulo complex conjugation, that is, 
we identify $z\in-\mathcal{H}$ with $\overline{z}\in\mathcal{H}$.
Then the $\SL_2(\RR)$-action on $\mathcal{H}$ extends to 
a $\GL_2(\RR)$-action also given by 
$z\cdot A = A^{-1}(z) = (dz-b)/(-cz+a)$ (up to complex conjugation). 
The covariant~$z$ of \cite{stoll-cremona} then turns out to also be $\GL_2(\RR)$-covariant.
In particular, we get a map
\[z: H_n(k)\rightarrow \HH^d,\quad\mbox{ which is $\GL_2(k)$-covariant.}\]

\subsubsection{Reduction for $\GL_2(\mathcal{O}_k)$ in $\HH^d$}

\newcommand{\ourgroup}{\GL_2(\mathcal{O}_k)} 

Let $N:\RR^d\rightarrow \RR: (x_m)_m\mapsto \prod_m x_m$,
define $\mathrm{Re}, \mathrm{Im}, |\cdot| : \CC^d\rightarrow \RR^d$
component-wise
and let $\log:\RR^d\rightarrow \RR^d:(x_m)_m\mapsto (\log|x_m|)_m$.

\begin{definition}\label{def:reducedhigher}
We call $z \in \HH^d$ \emph{reduced}
for $\ourgroup$ if it satisfies the following conditions.
\begin{itemize} 
\item[(R)] The point $\mathrm{Re}(z)\in \RR^d$ is in some fixed chosen fundamental hyper-parallelogram
 for addition by~$\mathcal{O}_k$,
\item[(I)] the point $\log (\mathrm{Im}(z))\in \RR^d$ is in some fixed chosen fundamental 
domain for addition by~$\log(\mathcal{O}_k^*)$, and
\item[(M)] the norm $N(\mathrm{Im}(z))$ is maximal for the $\ourgroup$-orbit
$\ourgroup z$.
\end{itemize}
\end{definition}
Let us first see how this is an analogue of Definition~\ref{def:reduced}.
Note that in the case $k=\QQ$,
we can choose the hyper-parallelogram $[-\frac{1}{2},\frac{1}{2}]$,
and then
conditions \ref{def:reduced}(R) and~\ref{def:reducedhigher}(R)
coincide and condition \ref{def:reducedhigher}(I) is empty.
It is well-known that under condition \ref{def:reduced}(R), we have \ref{def:reducedhigher}(M)
if and only if~\ref{def:reduced}(M).

\begin{remark}Definition~\ref{def:reducedhigher} is also closely related to a standard
definition of \emph{reduced} for the action of $\SL_2(\mathcal{O}_k)$ on~$\HH^d$.
Indeed, if $k$ has class number one and we replace $\ourgroup$ with $\SL_2(\mathcal{O}_k)$
and $\mathcal{O}_k^*$ with $(\mathcal{O}_k^*)^2$, then we get a fundamental domain of~\cite{vdgeer}.
One could use the standard fundamental domain from~\cite{vdgeer} in general, 
but since we had only one case of class number $>1$, we simply used (R), (I) and (M) for that field as well.
\end{remark}

The above gives rise to a notion of reduction for $\GL_2(\mathcal{O}_k)\times 1$ on $H_n(k)$.
We then get the following sketch of a reduction algorithm.

\begin{algorithmsketch}[{Reduction for $\GL_2(\mathcal{O}_k)\times \mathcal{O}_k^*$)\label{alg:sketch}}]\hfill\\
\textbf{Input:} $F\in H_n(k)$.\\
\textbf{Output:} $\fnew \in H_n(k)$ that is $\GL_2(\mathcal{O}_k)\times \mathcal{O}_k^*$-equivalent to
$F$ and $\GL_2(\mathcal{O}_k)$-reduced.
\begin{enumerate}
\item Compute a fundamental domain $\mathcal{F}$ for addition by $\mathcal{O}_k$ in $\RR^d$.
\item Compute a fundamental domain $\mathcal{G}$ for addition by 
$\log(\mathcal{O}_k^*)$
in $\RR^d$.
\item Take $u\in\mathcal{O}_k^*$ such that $\log \mathrm{Im}(z(F)) - (\log |\phi_m(u)|)_m \in \mathcal{G}$
and replace $F$ by
\[ F \cdot \left[\left(\begin{array}{rr} u & 0 \\ 0 & 1 \end{array}\right), 1\right] = F(uX, Z).\]
This replaces $z(F)$ by $u^{-1} z(F)$, hence
makes sure $F$ satisfies~(I) and preserves $N(\mathrm{Im}(z))$.
\item Take $b\in\mathcal{O}_k$ such that $\mathrm{Re}(z(F))-b\in\mathcal{F}$ and replace $F$ by
\[ F \cdot \left[\left(\begin{array}{rr} 1 & b \\ 0 & \phantom{-}1 \end{array}\right), 1\right] = F(X+bZ, Z).\]
This replaces $z(F)$ by $z(F)-b$, hence
makes sure $F$ satisfies (R) and preserves (I) and~$N(\mathrm{Im}(z))$.
\item Try to find a matrix $M$ such that $N(\mathrm{Im}(Mz))>N(\mathrm{Im}(z))$.
If no such matrix exists, go to step~6.
If such a matrix exists, replace $F$ by $F \cdot [M^{-1}, 1]$ and go to step~3.
\item Try to find $u\in\mathcal{O}_k^*$ such that the maximum of the heights
of the coefficients of $uF$ is minimal
and return $F [1_2, u] = uF$.
\end{enumerate}
\end{algorithmsketch}

Bases of $\mathcal{O}_K$ and $\mathcal{O}_K^*$
are easy to compute using a number theory package
like Magma~\cite{magma} or Pari~\cite{pari},
hence so are $\mathcal{F}$ and~$\mathcal{G}$.
Numerical approximation of the covariant $z:H_n(\RR)\rightarrow \HH$
of~\cite{stoll-cremona}
is available in Magma as a 
standard function (called \texttt{Covariant}).
So the only steps with missing details are 
5 and~6.

For step 5, note first that for $A
=({a\atop c}{b\atop d})
\in \GL_2(\RR)$
and $z\in \HH$
we have
$\mathrm{Im} Az =
|\det A|\cdot |cz+d|^{-2} \cdot \mathrm{Im}(z)$.
In particular, for $A\in \GL_2(\mathcal{O}_k)$
and $z\in \HH^g$, we have
$N(\mathrm{Im} Az) = N(|cz+d|)^{-2} N(\mathrm{Im} z)$
so the condition in step~5 is equivalent
to $N(|cz+d|)< 1$.
Given $c, d\in\mathcal{O}_k$, it is easy to find $a, b\in\mathcal{O}_k$
with $ad-bc\in\mathcal{O}_k^*$ if they exist, so for step~5, we need
only to find
$c, d$.

A fast first attempt at trying to find $c, d$ for step~5 is to consider the lattice
$\{cz+d\in\CC^g : c,d\in \mathcal{O}_k\}$ and compute
an LLL-reduced $\ZZ$-basis. If the first vector $cz+d$ of the LLL-reduced
basis satisfies $N(|cz+d|)<1$, then use these $c$ and~$d$.
Note that this always works if the covolume $N(\mathrm{Im}(z))$
of the lattice
is sufficiently small.

If the first attempt for step~5 fails, then we use an exhaustive search
as follows.
Note first of all that we only need to consider pairs $(c,d)$
up to multiplication by~$\mathcal{O}_k^*$.
Note $$N(|cz+d|)^2 \geq N(|c\mathrm{Re}(z)+d|)^2 + |N(c)|^2 \cdot N(|\mathrm{Im}(z)|)^2,$$
so the $c,d$ that we need satisfy $|N(c)| < N(|\mathrm{Im}(z)|)^{-1}$ and $c\not=0$.
We list all such~$c$ up to multiplication by units
by listing all ideals of norm $< N(|\mathrm{Im}(z)|)^{-1}$.
Next, the numbers $d$ that we need satisfy
$$\sqrt{1 - |N(c)|^2 \cdot N(|\mathrm{Im}(z)|)^2} > N(|c\mathrm{Re}(z) +d|)
\geq N(|c\mathrm{Re}(z)|) + \sum_i \left(\prod_{j\not=i} |\phi_j(c)\mathrm{Re}(z_j)|\right) |\phi_i(d)|,$$
which yields a bounded box in $\RR^d$ containing~$d$.
So we list all $d\in\mathcal{O}_k$ in that box.
This exhaustive search is guaranteed to find all relevant $c,d$, after which we choose
the pair $c,d$ with minimal $N(|cz+d|)$.
Since we have the minimal $N(|cz+d|)$, we also have the maximal
$N(|\mathrm{Im}(Az)|)$ for the whole orbit,
hence the algorithm finishes after one more iteration of steps 3, 4, 6.
The exhaustive search for $c,d$ can however be very slow,
and it is certainly very slow if $N(|\mathrm{Im}(z)|)$ is small.

We implemented Algorithm~\ref{alg:sketch} with this method for step 5
(first try the fast attempt, and if it fails
use the exhaustive search) and tested it
for quadratic fields of small discriminant.
In practice, this always was fast, taking less than a second to run.
An explanation for this is that
 if $N(|\mathrm{Im}(z)|)$ is small, then the fast LLL-attempt works,
and if the LLL-attempt fails, then $N(|\mathrm{Im}(z)|)$ is large
and hence exhaustive search is fast.

For step 6, write $F=\sum_{i=1}^n f_i X^i Z^{n-i}$ 
and consider the point $p = (\log(f_i))_i \in \RR^{(n+1)d}$.
The goal is to find $v$ in the
lattice $\{(\log(u),\ldots,\log(u))\in \RR^{(n+1)d} : u \in\mathcal{O}_k^*\}$
of rank $d-1$ that is closest to $p$ for the maximum-norm $|\cdot|_{\infty}$.
In the case $d=2$, this lattice has rank~$1$,
and finding a nearest vector in a lattice of rank~$1$ is easy.
Indeed, write $v=k b$ for a basis element $b$ and $k\in \ZZ$
(in our case $b=(\log(\epsilon),\ldots,\log(\epsilon))$ for a fundamental unit $\epsilon$)
and note that the norm $N(k) = |p-kb|_{\infty}$ is convex as a function of~$k$
by the triangle inequality.
By convexity, 
every local minimum is a global minimum,
so we walk from $k=0$ towards a local minimum $k$ and then return $\epsilon^{-k}F$.

We implemented this algorithm in Sage
and made it available online at~\cite{reduce-bitbucket}.

\begin{remark}
If one wants models of hyperelliptic curves
that are isomorphic over~$k$, then simply replace
$\mathcal{O}_k^*$ with $(\mathcal{O}_k^*)^2$ in step~6.
\end{remark}

\section{Results and Tables}
\label{sec:list}

In this section, we give our tables.
The most important columns
(the first and last)
of Tables 1a, 1b and 2b are
explained already in Theorem~\ref{thm:main}.
To explain the rest, we first need
to explain what a CM-type is.

\subsection{CM-types and reflex fields}

A \emph{CM-field} is a totally imaginary quadratic
extension $K$ of a totally real number field~$K_0$.
Note that $K$ has a unique \emph{complex conjugation}
automorphism, which is the generator $\rho =\overline{\cdot}$
of $\mathrm{Gal}(K/K_0)$.
Let $k$ be a field of characteristic zero.
For $\phi:K\rightarrow\kbar$, write
$\overline{\phi}=\phi\circ\rho$.
A \emph{CM-type} of $K$ with values in $\kbar$
is a set $\Phi$ of $g$ embeddings $K\rightarrow\kbar$
such that $\Phi\cup \overline{\Phi}$ is exactly
the set of all $2g$ embeddings.

Let $A$ be an abelian variety 
of dimension $g$ over a field~$k$
of characteristic $0$ and suppose
that $K\cong \End(A_{\kbar})$,
where $K$ is a number field of degree~$2g$.
Choose an isomorphism $i:K\rightarrow \End(A_{\kbar})$
and note that $i$ induces an action of $K$
on the tangent space of $A_{\kbar}$ at zero,
which makes this tangent space
into a $g$-dimensional $\kbar$-linear representation~$R$
of~$K$.
By Complex Multiplication theory
(\cite{shimura-taniyama})
the field $K$ is a CM-field
and there is a CM-type $\Phi$
such that the representation~$R$
is isomorphic
to a direct sum of the $g$ elements of~$\Phi$.
We say that $(A,i)$ is \emph{of type $\Phi$}
and that $\Phi$ is \emph{the CM-type of $(A, i)$}.

The type norm of $\Phi$ is the multiplicative map
\[ N_{\Phi}:K\rightarrow \kbar : \alpha\rightarrow\prod_{\phi\in\Phi} \phi(\alpha),\]
which satisfies $N_{\Phi}(\alpha) = \det R(\alpha)$
if $(A, i)$ is of type~$\Phi$.
The \emph{reflex field}
$K^r\subset \kbar$ is defined to be the field generated
over $\QQ$ by the set of type norms
$\{N_{\Phi}(\alpha)\mid \alpha\in K\}$.
The CM-type and reflex field are important in the theory
of complex multiplication, as they are the link between
the field of definition~$k$ and the endomorphisms in~$K$.
In fact, the main theorem of complex multiplication
involves abelian extension of~$K^r$ rather than~$K$.

Note that the reflex field of the CM-type of $(A,i)$
depends only on~$A$, since composition of $\Phi$
with elements of $\mathrm{Aut}(K)$ does not change $N_{\Phi}$.

\subsection{The case distinctions}\label{ssec:cases}

There are three possibilities for the Galois group
of a quartic CM-field (\cite[Example 8.4(2)]{shimura-taniyama}):
\begin{enumerate}
\item $K/\QQ$ is Galois with cyclic Galois group $C_4$
of order~$4$,
\item $K/\QQ$ is not normal, and its normal closure
has dihedral Galois group $D_4$ of order~$8$,
\item $K/\QQ$ is Galois over $\QQ$ with Galois group $V_4=C_2\times C_2$.
\end{enumerate}
It is known that case 3 of a \emph{biquadratic} CM-field contradicts our assumption
that $A$ is simple over~$\kbar$, so
following the Echidna database~\cite{echidna},
our tables will
be partitioned into cases 1 and~2.

Recall that we are interested in curves with
CM by the maximal order of a quartic CM-field~$K$,
which are defined
over the reflex field~$K^r$.
We distinguish whether the 
curves are defined over:
\begin{itemize}
\item[a.] $\QQ$,
\item[b.] $K^{\mathrm{r}}_0$,  but not $\QQ$,
\item[c.] $K^{\mathrm{r}}$, but not $K^{\mathrm{r}}_0$.
\end{itemize}

The motivation for this article was that case 2a is not
possible, and during our construction of our list
we found no examples for case 1c.
Hence we conjecture that case 1c is empty
and we constructed four tables
corresponding to the four cases 1a, 1b, 2b, and 2c.
Case 1a corresponds to Van Wamelen~\cite{vanwamelen}.

\newcommand{\ZZbar}{\overline{\ZZ}}

\subsection{Legend for the tables}

In case 1, we have $K^r\cong K$ and $\Aut(K)=C_4$,
so every abelian variety with CM by $\mathcal{O}_K$
is of all four CM-types, we therefore give
$K$ and~$f$, but not $\Phi$ or $K^r$.

In case 2, we have two $\Aut(K)$-orbits of CM-types,
and, given~$A$, only one of these orbits correspond to~$A$.
We specify the correct 
CM-type orbit by specifying its reflex field
$K^r$ as an extension of the quadratic field
$K_0^r=\QQ(a)$.

A quartic CM-field $K$ is given up to isomorphism
by a unique triple
$[D,A,B]$ as follows, following the
Echidna database~\cite{echidna}.
Write $K=K_0(\sqrt{r})$ for some real quadratic
field~$K_0$ and some totally negative $r\in K_0$.
Without loss of generality, we take $r\in\mathcal{O}_{K_0}$
with $A=-\tr_{K_0/\QQ}(r)\in\ZZ_{>0}$
\emph{minimal}.
Then let $B=N_{K_0/\QQ}(r)\in\ZZ_{>0}$ and assume $B$ is minimal
for this~$A$.
Finally, let $D=\Delta_{K_0/\QQ}$.
We use the triple
$[D,A,B]$ to represent the isomorphism class
of~$K$, and note $K\cong \QQ[X]/(X^4+AX^2+B)$.

Let us briefly state what the
notation in the table means.

\begin{description}[font=\normalfont,style=sameline,leftmargin=1.3cm]
\item[DAB] With $[D,A,B]$ as in the first column,
let $K=\QQ(\beta)$, where $\beta$ is a root of $X^4+AX^2+B$.
\item[DAB$^r$] In tables 2b and 2c, let $[D^r,A^r,B^r]$
be as in the column DAB$^r$.
Then let $K^r=\QQ(\alpha)$, where $\alpha$ is a root
of $X^4+A^{r}X^2+B^r$. In tables 1a and 1b, we have $K^r\cong K$
and $[D^r,A^r,B^r]=[D,A,B]$.
\item[$a$] A root of
         $X^2+\epsilon X+(D^r-\epsilon)/4$ with $\epsilon\in\{0,1\}$
         congruent to $D^r$ modulo~$4$.
         We have $\ZZ[a]=\mathcal{O}_{K_0^r}$.      
         In case 1, the field $K^r$ is uniquely determined
        as a subset of $\kbar$ by $K^r\cong K$.
         In case 2, there are two quadratic extensions $K^r/\QQ(a)$ that satisfy $K^r\cong \QQ[X]/(X^4+AX^2+B)$, and they are conjugate over $\QQ$. The expression
        of $a$ in terms of $\alpha$ (in the column `$a$')
        tells us which of
        these extensions is $K^r=\QQ(\alpha)$.
\item[$f$, $C$] The polynomial $f\in\ZZ[a][x]$ given in the final
column defines a 
hyperelliptic curve $C:y^2=f(x)$
of genus two.
\item[$\Delta(C)$] The discriminant of the given model
$y^2=f(x)$ of~$C$.
\item[$\Delta_{\mathrm{stable}}$]
The minimal discriminant of all models of $C$
over ${\QQbar}$
of the form $y^2+h(x)y=g(x)$ with
coefficients in~$\ZZbar$.
\item[$\Phi$] One fixed CM-type of $K$ with reflex
    field $K^r$,
    uniquely determined up to right-composition
    with $\mathrm{Aut}(K)$ by the following recipe.
        In case~1, we have $\mathrm{Aut}(K)=C_4$
        and we fix an arbitrary CM-type.
        In case~2, the type $\Phi$ is unique up to complex conjugation and given as follows:
        $\Phi$ is a CM-type of $K$ with values in a normal closure of $K^r$ and reflex field~$K^r$.
\item[$(xa+y)_{n}^{e}$] The $e$th power of the 
       principal $\ZZ[a]$-ideal of norm $n$ generated by $xa+y$.
       This notation is used in the discriminant and obstruction columns.
\end{description}

\subsection{Statement and proof of results regarding the table}

We give the following more detailed version of Theorem~\ref{thm:main}.
\begin{theorem}\label{thm:mainlong}
With the notation as in the legend above,
we have the following.
\begin{enumerate}
\item For every row of Tables 1a, 1b, and~2b,
let $K$ be as specified in that row (see ``\textup{DAB}'' in the legend),
and consider the curves $C$ given in that row.
Then the following holds.
\begin{enumerate}
\item In Table~1a, the given curves
are exactly all $\QQbar$-isomorphism classes
of curves satisfying
$\mathrm{End}(J(C)_{\QQbar})\cong \mathcal{O}_K$.
\item
In Tables 1b and~2b, the given curves and their
quadratic conjugates over~$\QQ$
are exactly all $\QQbar$-isomorphism classes
of curves satisfying
$\mathrm{End}(J(C)_{\QQbar})\cong \mathcal{O}_K$.
\item In Tables 1a and 1b, the curves have CM-type $\Phi$ for every CM-type $\Phi$ of $K$.
\item In Table 2b, the given curve has the given CM-type~$\Phi$,
and its quadratic conjugate has CM-type $\Phi'$ where $\Phi'\not\in\{\Phi,\overline{\Phi}\}$.
\end{enumerate}
\item
The curves in tables 1a, 1b, and~2b
are all defined over $K_0^r$, and
the entries $\Delta(C)/\Delta_{\mathrm{stable}}$
and $\Delta_{\mathrm{stable}}$ are
as explained in the legend above.
\item
In Tables 1b and~2b, the discriminant $\Delta(C)$
is minimal (as defined in Section \ref{ssec:reductiondiscriminantlocal})
 among all $\QQbar$-isomorphic models
of the form $y^2=g(x)$ with $g(x)\in\mathcal{O}_{K^r_0}[x]$,
except for the case of the field $[17, 46, 257]$
in Table~2b,
where a global minimal model does not exist, and
the given model is minimal outside~$(2,a+1)$.
In Table~1a, the discriminant is minimal
among such models with $g(x)\in\ZZ[x]$.
\item 
The curves in Tables 1b and~2b have Igusa invariants
that do not lie in~$\QQ$. In particular, they have
no model over~$\QQ$.
\item
For every row of Table~2c, the number in the final column
is the number of curves over~$\QQbar$ with 
$\mathrm{End}(J(C)_{\QQbar})\cong \mathcal{O}_K$
\emph{of type~$\Phi$}
up to isomorphism over~$\QQbar$.
These curves all have Igusa invariants
in~$K_0^r$ but no model over~$K_0^r$. They do have a model
over~$K^r$. The obstructions column gives exactly the set
of places of~$K_0^r$ at which Mestre's conic locally
has no point.
\end{enumerate}
\end{theorem}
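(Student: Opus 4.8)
The plan is to deduce Theorem~\ref{thm:mainlong} from the correctness of the algorithms in Sections~\ref{sec:invcmmain}--\ref{sec:reduction}, together with the explicit output of running them and a handful of supplementary verifications. First I would record the dictionary between curves and abelian surfaces. Since a quartic CM-field $K$ as in the tables is either cyclic or has Galois group $D_4$ over $\QQ$, never biquadratic, any abelian surface over $\QQbar$ with CM by $\OO_K$ is simple; by Torelli it is therefore the Jacobian of a curve of genus two, unique up to $\QQbar$-isomorphism, and conversely the Jacobian functor sends the curves in the theorem to such surfaces. Lemma~\ref{lem:aut} shows $\Aut(C_{\QQbar})=\{1,\iota\}$ for every curve that occurs (with the single exception noted in the statement), so Proposition~\ref{prop:twist} and Mestre's theorem (Section~\ref{sec:mestre}) apply to all of them. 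It thus suffices to (i) enumerate the finitely many $\QQbar$-isomorphism classes of principally polarised abelian surfaces with CM by $\OO_K$ --- equivalently the roots of $H_{K,1}$ --- and (ii) match each one with a row of the tables.

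For step (i) I would compute $H_{K,1},\widehat{H}_{K,2},\widehat{H}_{K,3}\in\QQ[X]$ \emph{exactly}, by running the complex-analytic method of Section~\ref{sec:invcmmain} in interval arithmetic to a precision larger than the Lauter--Viray denominator bound $\Den$ of~\cite{lauter-viray-denominators} and rounding $\Den F_i$ to the nearest integer vector; one then checks that $H_{K,1}$ is separable and that $I_2\neq 0$ at each root (invoking the workaround of~\cite{phdthesis} otherwise), so that the roots of $H_{K,1}$ are in bijection with the curves and each triple $(i_1,i_2,i_3)$ is recovered as in Section~\ref{ssec:icp}. For each triple I would run Mestre's algorithm over its field of moduli --- which the computation shows is $\QQ$ for Table~1a and the real quadratic field $K_0^r$ for Tables~1b and~2b --- followed by the reduction Algorithms~\ref{alg:loc} and~\ref{alg:glo} (and the class-number-$>1$ variant of Section~\ref{ssec:classnumber} for $[17,46,257]$), and verify that the reduced model agrees with the tabulated one by comparing Igusa--Clebsch invariants up to weighted scaling. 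This establishes parts~1(a),(b), and, since the computation exhibits $i_1(C)\notin\QQ$ but $i_1(C)\in K_0^r$, also part~4: the field of moduli --- hence any field of definition --- is the quadratic field $K_0^r$ and not $\QQ$, so $C$ has exactly one nontrivial $\QQ$-conjugate, which again has CM by $\OO_K$ and is the second curve of the row.

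The CM-type claims~1(c)--1(d) I would read off from CM theory~\cite{shimura-taniyama,runtime}: in the cyclic case $\Aut(K)=C_4$ acts transitively on the four CM-types (composing $i:K\to\End(A_{\QQbar})$ with an automorphism of $K$ changes the type by the corresponding permutation), so one abelian surface is of every type, and all four types have reflex field $\cong K$; in the $D_4$ case $\Aut(K)=\{1,\rho\}$ with $\rho$ complex conjugation, giving two orbits $\{\Phi,\overline{\Phi}\}$ and $\{\Phi',\overline{\Phi'}\}$ distinguished by their reflex fields, the nontrivial $\QQ$-conjugate of a surface of a type in one orbit having a type in the other; I would make this precise using the reflex-field column and the expression of $a$ in terms of $\alpha$. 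For parts~2 and~3: the tabulated $f$ lies in $\ZZ[a][x]=\OO_{K_0^r}[x]$, so $C:y^2=f(x)$ is literally a model over $K_0^r$, and, having identified it with a curve from step~(i), we get the field-of-definition statement of part~2; $\Delta(C)$ is computed directly from $f$, and $\Delta_{\mathrm{stable}}$ --- the minimal discriminant over $\QQbar$ among models $y^2+h(x)y=g(x)$ with coefficients in $\ZZbar$ --- is computed prime by prime via Liu's theory of minimal models~\cite{liu-local-hyperelliptic} (the analogue of Proposition~\ref{prop:local} that also allows the $y^2+h(x)y$ normalisation at primes above $2$), giving the tabulated quotient $\Delta(C)/\Delta_{\mathrm{stable}}$. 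Part~3 is precisely the output guarantee of Algorithm~\ref{alg:glo} with Proposition~\ref{prop:local}: the produced model is minimal at every finite place of $K_0^r$, except for $[17,46,257]$, where $K_0^r$ has class number $3$, no global minimal model exists, and Section~\ref{ssec:classnumber} yields a model minimal away from $(2,a+1)$.

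Finally, part~5 (Table~2c) is where I expect the real work to lie. Here I would fix the CM-type $\Phi$ and enumerate complex-analytically the principally polarised abelian surfaces of type $(\OO_K,\Phi)$ up to isomorphism --- the standard CM enumeration of~\cite{runtime,shimura-taniyama} --- whose number is the last-column entry; each is simple, hence a Jacobian of a curve $C$ with $\Aut(C_{\QQbar})=\{1,\iota\}$, whose Igusa invariants one checks numerically to lie in $K_0^r$. Forming Mestre's conic $M_x$ over $K_0^r$ from these invariants and testing local solubility at every place, one finds it has no point at exactly the places in the obstructions column (a set necessarily of even cardinality by Hilbert reciprocity), so by Mestre's theorem $C$ has no model over $K_0^r$; solving the same conic over the quadratic extension $K^r$ produces a point, hence by the other half of Mestre's theorem a model of $C$ over $K^r$. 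The main obstacle, I expect, is exactly this $D_4$ bookkeeping: certifying that the numerical Igusa invariants are the correct algebraic numbers (which for $[257,23,68]$ forces the more delicate denominator computation of Section~\ref{sec:invcmmain}), pinning down which CM-type orbit and reflex field each CM-point carries, and proving that the set of places where $M_x$ has no $K_0^r$-point is neither larger nor smaller than the tabulated obstruction set.
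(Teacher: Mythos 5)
Most of your plan coincides with the paper's actual proof: the certified computation of $H_{K,1},\widehat{H}_{K,2},\widehat{H}_{K,3}$ via interval arithmetic together with the Lauter--Viray denominator bound, Mestre's algorithm plus the reduction algorithms (with Proposition~\ref{prop:local} giving minimality and the class-number-$3$ exception for $[17,46,257]$), and, for Table~2c, the non-empty local obstruction set ruling out a model over $K_0^r$. Your small deviations are harmless: you compute $\Delta_{\mathrm{stable}}$ via local minimal-model theory where the paper reads it off from Igusa's arithmetic invariants, and you obtain the $K^r$-model in part~5 by solving the conic over $K^r$, where the paper either checks that the obstruction places are inert or ramified in $K^r/K_0^r$ or simply invokes Theorem~\ref{thm:modulidef}.

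The genuine gap is claim~1(d) (and the analogous type bookkeeping behind Table~2b's reflex-field column). The symmetry argument you give --- that the $\QQ$-conjugate of a surface of type $\Phi$ has a type $\Phi'\notin\{\Phi,\overline{\Phi}\}$ --- is correct, but it does not decide \emph{which} of the two conjugate curves in a row carries the tabulated type, i.e.\ which real root $a$ of the given quadratic goes with the embedding $K^r=\QQ(\alpha)$ specified by the column ``$a$''. Saying you ``would make this precise using the reflex-field column and the expression of $a$ in terms of $\alpha$'' restates the claim rather than proving it; and your step~(i)--(ii) cannot supply it, because you enumerate the roots of $H_{K,1}\in\QQ[X]$, which forgets the CM-type, and you never certify a matching between per-type complex CM points and specific real embeddings of $K_0^r$. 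The paper resolves exactly this point by reducing the curve modulo a suitable prime and applying the Shimura--Taniyama formula (the same mechanism that underlies Theorem~\ref{thm:cannedcm}): the Frobenius ideal is the reflex type norm $N_{\Phi^{\mathrm{r}}}(\mathfrak{P})$, so a point count distinguishes $\Phi$ from $\Phi'$. You would need this (or an equally concrete certified identification) to close 1(d); note that for part~5 the per-type \emph{count} is unaffected, since conjugation gives a bijection between the two type-orbits.
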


Before we give the proof, let us note that the curves in 1(a) and Table~1a
were already given by Van Wamelen~\cite{vanwamelen}
and proven correct by Van Wamelen~\cite{correctnesswamelen}
and Bisson and Streng~\cite{bisson-streng}.

\begin{proof}
We compute the isomorphism class of the reflex field
as follows.
The reflex field is again a non-biquadratic quartic CM-field.
In fact, one can compute that
it is isomorphic to $\QQ[X]/(X^4+2AX^2+(A^2-4B))$.
Let $[D',A',B']$ be the triple that represents~$K^r$
as before. We do not necessarily have $A'=2A$ and
$B'=A^2-4B$, because those values are not always minimal.
Note that we do
have $K_0^r\cong \QQ(\sqrt{D'})\cong \QQ(\sqrt{B})$.

Our computation of Igusa class polynomials
shows that we have the correct number of curves for each field.
Since we use interval arithmetic and 
the denominator formulas of
Lauter and Viray~\cite{lauter-viray-denominators},
these computations
even prove that the Igusa invariants themselves are correct,
including the ones for Table~2c, which are not listed.
We used the Igusa invariants
to compute the curves and obstructions with Mestre's algorithm,
which proves that the curves and obstructions are correct.
In case~1, all CM-types are in the same orbit for $\mathrm{Aut}(K)$,
so they are all correct. In cases 2b and~2c, 
the correct CM-type is determined using reduction modulo a suitable prime
and the Shimura-Taniyama formula~\cite[Theorem~1(ii) in Section 13.1]{shimura-taniyama}.
Proposition~\ref{prop:local} and our reduction algorithm
prove that the discriminant is minimal.
The stable discriminant is computed directly from Igusa's
arithmetic invariants~\cite{igusa}.
The set of obstructions in Table~2c
is non-empty, hence there is no model over~$K_0^r$.
It remains to prove that there is a model over~$K^r$,
which can be verified by checking that the obstructions
are inert or ramified in $K^r/K_0^r$,
but which also follows from Theorem~\ref{thm:modulidef} below.
\end{proof}

\subsection{Theoretical results}
\label{ssec:theory}

The following known result is the reason
why Van Wamelen's table~\cite{vanwamelen}
did not contain any curves
with CM by non-Galois CM-fields
and why we have no Table 2a.
\begin{proposition}\label{prop:wamelenmissing}
Let $C$ be a curve of genus two with CM by an order
in a non-Galois quartic CM-field.
Then the field of moduli of~$C$ contains~$K_0^r$.
\end{proposition}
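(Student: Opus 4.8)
The plan is to use the fact that the field of moduli $k_0$ of $C$ is a field of definition for the Igusa invariants, together with the main theorem of complex multiplication, which ties the field of definition of a CM abelian variety to the reflex field. First I would recall that if $C$ has CM by an order $\mathcal{O}$ in the quartic CM-field $K$, then its Jacobian $J=J(C)$ is an abelian surface with CM by $\mathcal{O}$, and $J$ carries a CM-type $\Phi$ (unique up to complex conjugation, since $C$ is defined over a field of characteristic $0$). The reflex field $K^r$ and its real quadratic subfield $K_0^r$ depend only on $J$ (as remarked after the definition of the reflex field in the excerpt), so it makes sense to speak of "the" $K_0^r$ attached to $C$.

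Next, the key input is that for a polarised abelian variety $(J,\lambda)$ with CM of type $\Phi$, the field of moduli — equivalently here the field of moduli of $C$, since the Igusa invariants determine and are determined by $(J,\lambda)$ up to $\overline{\QQ}$-isomorphism for a genus-two curve — is stable under $\Gal(\overline{\QQ}/k_0)$. For any $\sigma\in\Gal(\overline{\QQ}/k_0)$, the conjugate curve $C^\sigma$ is $\overline{\QQ}$-isomorphic to $C$, so its Jacobian $(J^\sigma,\lambda^\sigma)$ is isomorphic to $(J,\lambda)$ as a polarised abelian variety with CM. But conjugation by $\sigma$ transforms the CM-type $\Phi$ into $\sigma\circ\Phi$ (viewing $\Phi$ as a set of embeddings $K\to\overline{\QQ}$), so the equality of CM-types up to complex conjugation forces $\sigma\circ\Phi\in\{\Phi,\overline{\Phi}\}$. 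Hence $\sigma$ permutes the set $\{\Phi,\overline{\Phi}\}$, i.e. $\Gal(\overline{\QQ}/k_0)$ stabilises this pair.

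Then I would identify the subfield of $\overline{\QQ}$ fixed by the stabiliser of $\{\Phi,\overline{\Phi}\}$ with $K_0^r$. Concretely: $K^r$ is the fixed field of $\{\sigma : \sigma\circ\Phi=\Phi\}$ inside a normal closure $L$ of $K^r$, and $\{\sigma:\sigma\circ\Phi\in\{\Phi,\overline{\Phi}\}\}$ is then an index-two overgroup whose fixed field is the unique quadratic subfield $K_0^r$ of $K^r$ (here is where non-Galois, i.e. $\mathrm{Gal}=D_4$, enters: complex conjugation acts nontrivially on $K^r$, so this index-two subgroup is genuinely larger than $\mathrm{Gal}(L/K^r)$, and its fixed field is the real quadratic field $K_0^r$, not $K^r$ or $\QQ$). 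Since $\Gal(\overline{\QQ}/k_0)$ is contained in this stabiliser, $k_0\supseteq K_0^r$, which is the claim.

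The main obstacle I expect is making precise the dictionary between "field of moduli of the genus-two curve $C$" and "field of moduli of the polarised abelian surface $(J(C),\lambda)$", and the behaviour of the CM-type under Galois conjugation — that is, verifying carefully that $(J(C)^\sigma,\lambda^\sigma)$ has CM-type $\sigma\circ\Phi$ and that an isomorphism of polarised CM abelian surfaces forces equality of CM-types up to $\rho$. This is standard (Shimura–Taniyama), but it requires being careful about the canonical principal polarisation on $J(C)$ coming from the theta divisor and about the fact that for a genus-two curve the Torelli map is injective on $\overline{\QQ}$-points with the polarisation remembered, so that $C_{\overline{\QQ}}\cong C'_{\overline{\QQ}}$ if and only if $(J(C),\lambda)\cong(J(C'),\lambda')$. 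Once that is in place, the group-theoretic computation of the fixed field of the stabiliser of $\{\Phi,\overline{\Phi}\}$ in the $D_4$ case is elementary.
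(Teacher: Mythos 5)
Your overall route is sound, and it is in fact a reconstruction of the argument behind the result the paper merely cites (the paper's proof of Proposition~\ref{prop:wamelenmissing} is a one-line reference to Shimura's Proposition~5.17(5)); so a self-contained proof along your lines would be a genuine addition. However, there is a gap at the pivotal step, and you have located the use of the non-Galois hypothesis in the wrong place. You assert that $J(C)$ carries a CM-type that is ``unique up to complex conjugation, since $C$ is defined over a field of characteristic $0$''. Characteristic $0$ is irrelevant here: the CM-type depends on the chosen isomorphism $i:K\rightarrow\End(J)\otimes\QQ$, and two choices differ by an element of $\Aut(K)$. What a $\QQbar$-isomorphism $J^\sigma\rightarrow J$ actually gives you (after transporting the endomorphism algebra, which is exactly $K$ since $K$ is primitive, so $J$ is simple) is $\sigma\circ\Phi=\Phi\circ\alpha$ for some $\alpha\in\Aut(K)$. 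The hypothesis that $K$ is non-Galois is precisely what forces $\Aut(K)=\{1,\rho\}$ and hence $\sigma\circ\Phi\in\{\Phi,\overline{\Phi}\}$. If $K$ is cyclic, then $\Aut(K)=C_4$ acts transitively on the four CM-types, the constraint is vacuous, and your argument as written would apply verbatim and ``prove'' that van Wamelen's curves over $\QQ$ have field of moduli containing $K_0$ --- which is false. So the step ``equality of CM-types up to complex conjugation'' is exactly where the hypothesis must be invoked, and as written it is unsupported.

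Conversely, the place where you say $D_4$ enters --- the identification of the fixed field of $\{\sigma:\sigma\circ\Phi\in\{\Phi,\overline{\Phi}\}\}$ with $K_0^r$ --- does not in fact need it: complex conjugation lies in that stabiliser and acts nontrivially on $K^r$ for any primitive quartic CM-field (cyclic included, where $K^r\cong K$), so the stabiliser contains $\Gal(\QQbar/K^r)$ with index two and its fixed field is $K_0^r$ in both cases. With the one correction above (isomorphisms transport $\End\otimes\QQ$, giving $\sigma\circ\Phi=\Phi\circ\alpha$ with $\alpha\in\Aut(K)=\{1,\rho\}$ because $K$ is non-Galois), your proof closes up and is essentially the standard proof of the cited result; the remaining points you flag (Torelli, i.e.\ the match between the field of moduli of $C$ and of the principally polarised $(J(C),\lambda)$, and the fact that $(J^\sigma,i^\sigma)$ has type $\sigma\circ\Phi$) are indeed standard and unproblematic.
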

\begin{proof}
This is a special case of
\cite[Proposition 5.17(5)]{shimuraAF}.
\end{proof}

While the result above gives a lower bound for the
field of definition and the field of moduli,
the following result
gives an upper bound.

\begin{theorem}\label{thm:modulidef}
Let $C$ be a curve of genus two with CM by 
the maximal order of a non-biquadratic quartic CM-field,
let $K^{\mathrm{r}}$ be the reflex field and $k_0$
the field of moduli.

Then $K^{\mathrm{r}}k_0$ is a field of definition
and we have $[K^{\mathrm{r}}k_0 : K^{\mathrm{r}}_0 k_0] = 2$.
\end{theorem}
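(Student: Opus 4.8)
Theorem \ref{thm:modulidef} asserts two things: that $K^{\mathrm r}k_0$ is a field of definition, and that $[K^{\mathrm r}k_0 : K^{\mathrm r}_0 k_0] = 2$.

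The plan is to transfer the problem to the Jacobian, analyse the Galois action on its CM-structure by a cocycle argument, and then deduce both assertions, importing two facts from Shimura's theory of complex multiplication. First I would replace $C$ by its Jacobian $J=J(C)$ with canonical principal polarization~$\lambda$. Since $\End(J_{\QQbar})\otimes\QQ\cong K$ is a field, $J$ is simple, so $(J,\lambda)$ is geometrically a Jacobian (never a product of elliptic curves); hence by Torelli's theorem in genus two the curve $C$ and the principally polarized abelian surface $(J,\lambda)$ have the same field of moduli~$k_0$, and every field of definition of $(J,\lambda)$ is also a field of definition of~$C$. It therefore suffices to exhibit a model of $(J,\lambda)$ over $K^{\mathrm{r}}k_0$ and to prove $[K^{\mathrm{r}}k_0:K^{\mathrm{r}}_0k_0]=2$.

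Fix an isomorphism $\iota\colon\OO_K\xrightarrow{\ \sim\ }\End(J_{\QQbar})$ with CM-type~$\Phi$, and recall that $K^{\mathrm{r}}$ is the fixed field of $\{\sigma\in\Gal(\QQbar/\QQ):\sigma\Phi=\Phi\}$ for the post-composition action on embeddings (a standard description of the reflex field, cf.~\cite{shimura-taniyama}). For $\sigma\in\Gal(\QQbar/k_0)$ pick an isomorphism $f_\sigma\colon(J,\lambda)^\sigma\to(J,\lambda)$; conjugating $\iota^\sigma$ by $f_\sigma$ yields a CM-structure on $(J,\lambda)$, necessarily of the form $\iota\circ\tau_\sigma$ for a unique $\tau_\sigma\in\Aut(K)$. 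As $\Aut(K)$ is abelian and $\Aut(J,\lambda)$ equals the group of roots of unity in~$K$ — which is $\{\pm1\}$ for every non-biquadratic quartic CM-field other than $\QQ(\zeta_5)$, for which the theorem follows at once from $y^2=x^5-1$ — the map $\sigma\mapsto\tau_\sigma$ is a homomorphism, and since an isomorphism of abelian varieties preserves the CM-type, $(J^\sigma,\iota^\sigma)$ having CM-type $\sigma\Phi$ while $(J,\iota\circ\tau_\sigma)$ has CM-type $\Phi\circ\tau_\sigma$ forces $\sigma\Phi=\Phi\circ\tau_\sigma$. If $\tau\in\Aut(K)$ satisfied $\Phi\circ\tau=\Phi$ with $\tau\ne1$, then $\overline\Phi=\Phi$ (because $\rho$ lies in every nontrivial subgroup of $\Aut(K)$, which is cyclic of order $2$ or~$4$), contradicting $\Phi\cap\overline\Phi=\emptyset$; so $\Phi\circ\tau=\Phi$ forces $\tau=1$. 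Hence $\tau_\sigma=1\iff\sigma\Phi=\Phi\iff\sigma|_{K^{\mathrm{r}}}=\mathrm{id}$, whence $\ker\tau=\Gal(\QQbar/k_0K^{\mathrm{r}})$: the triple $(J,\lambda,\iota)$ has field of moduli exactly $k_0K^{\mathrm{r}}$. A polarized abelian variety of CM-type is defined over its field of moduli — the descent obstruction in $H^2(\,\cdot\,,\{\pm1\})\hookrightarrow\mathrm{Br}(k_0K^{\mathrm{r}})$ vanishes, which is part of Shimura's theory (the same source~\cite{shimuraAF} as Proposition~\ref{prop:wamelenmissing}) — so $(J,\lambda,\iota)$, and a fortiori~$C$, is defined over $k_0K^{\mathrm{r}}=K^{\mathrm{r}}k_0$. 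This proves the first assertion.

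For the degree, note that $K^{\mathrm{r}}$ is a quartic CM-field whose unique quadratic subfield is $K^{\mathrm{r}}_0$, so $[K^{\mathrm{r}}:K^{\mathrm{r}}_0]=2$ and hence $[K^{\mathrm{r}}k_0:K^{\mathrm{r}}_0k_0]\le 2$, with equality unless $K^{\mathrm{r}}\subseteq K^{\mathrm{r}}_0k_0$. The field $K^{\mathrm{r}}_0$ is totally real, and $k_0$ is totally real as well: complex conjugation carries $(J,\lambda)$ to a principally polarized surface with CM by $\OO_K$ and CM-type $\overline\Phi=\Phi\rho$, which — as $\rho\in\Aut(K)$ — is $\QQbar$-isomorphic to $(J,\lambda)$ (cf.~\cite[Proposition~5.17]{shimuraAF}); thus $C$ is isomorphic to its complex conjugate and $k_0\subseteq\RR$. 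Therefore $K^{\mathrm{r}}_0k_0\subseteq\RR$, whereas $K^{\mathrm{r}}$ is totally imaginary, so $K^{\mathrm{r}}\not\subseteq K^{\mathrm{r}}_0k_0$ and $[K^{\mathrm{r}}k_0:K^{\mathrm{r}}_0k_0]=2$.

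The one genuinely nontrivial step is the two inputs from Shimura's theory used above: that a CM abelian variety, equipped with its polarization and $\OO_K$-action, descends to its field of moduli, and that complex conjugation fixes the $\QQbar$-isomorphism class of $(J,\lambda)$ — equivalently, that the complex-conjugate principal polarization is matched by the isomorphism conjugating the CM-type. Both are of exactly the kind recorded in \cite[Proposition~5.17]{shimuraAF}; granting them, the cocycle bookkeeping identifying the field of moduli of $(J,\lambda,\iota)$ with $k_0K^{\mathrm{r}}$ and the final degree count are routine.
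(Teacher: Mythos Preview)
Your argument is substantially more detailed than the paper's, which simply cites Shimura~\cite{shimura71} (several results combined) for the first assertion and \cite[Lemma~2.6]{streng-reciprocity} for the second. Your cocycle analysis identifying the field of moduli of $(J,\lambda,\iota)$ with $k_0K^{\mathrm r}$ is a correct unpacking of the relevant part of Shimura's theorem, and the reduction to the totally-real question in the second part is a clean reformulation.

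There is, however, a genuine soft spot in your justification that $k_0$ is totally real. You argue that $(J,\lambda)^c$ has CM-type $\overline{\Phi}=\Phi\rho$, and ``as $\rho\in\Aut(K)$'' conclude it is $\QQbar$-isomorphic to $(J,\lambda)$. But two principally polarized abelian surfaces with CM by $\mathcal{O}_K$ of the same type $\Phi$ need not be isomorphic: the isomorphism classes within a fixed type are parametrized by ideal-class data, and complex conjugation sends the class of $\mathfrak{a}$ to that of $\overline{\mathfrak{a}}$, which is in general different. So the CM-type bookkeeping alone does not close the argument; one really needs the finer statement (which you correctly flag at the end as a nontrivial Shimura input) that complex conjugation fixes the full isomorphism class of the polarized variety, not merely its CM-type orbit. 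This is true, but it comes from the analytic description of the conjugate (tracking both the lattice and the Riemann form) or from the reciprocity law, not from the sentence you wrote. The citation to \cite[Proposition~5.17]{shimuraAF} is also not quite on target: the paper uses 5.17(5) for the inclusion $K_0^{\mathrm r}\subseteq k_0$ in the non-Galois case, which is a different statement from $k_0\subseteq\RR$.

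In summary: your strategy is sound and more transparent than the paper's bare citations, but the step ``same CM-type orbit $\Rightarrow$ isomorphic'' is not valid as stated, and you should either cite the precise result that gives $(J,\lambda)^c\cong(J,\lambda)$ or carry out the lattice-and-polarization computation explicitly.
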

\begin{proof}
The first statement is a special case of the main
theorem of Milne~\cite{milne-definition, milne-correction}.
Alternatively, it is Theorem 11 on page 524 of \cite{shimura71},
combined with Proposition 2(3.4) on page 514, with
the line below Proposition 7 on page 525, and
with the fact that there are exactly 2 or 10 roots
of unity in $K$ if $K$ is cyclic or non-Galois of degree 4.

The second statement is a special case of \cite[Lemma 2.6]{streng-reciprocity}.
\end{proof}
\begin{corollary}\label{cor:modulidef}
In the notation of Theorem~\ref{thm:modulidef},
the following are equivalent:
\begin{enumerate}
\item $K^{\mathrm{r}}$ is a field of definition,
\item $K^{\mathrm{r}}$ contains the field of moduli $k_0$,
\item $K^{\mathrm{r}}_0$ contains the field of moduli $k_0$.
\end{enumerate}
In the non-Galois case, these conditions are also equivalent to
\begin{enumerate}\setcounter{enumi}{3}
\item $K^{\mathrm{r}}_0$ equals the field of moduli $k_0$.
\end{enumerate}
\end{corollary}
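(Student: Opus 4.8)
The plan is to deduce the corollary from Theorem~\ref{thm:modulidef} together with Proposition~\ref{prop:wamelenmissing}, using only elementary properties of compositum fields and degrees. First I would set up the two inclusions $k_0 \subseteq K_0^{\mathrm r} k_0 \subseteq K^{\mathrm r} k_0$ and recall from Theorem~\ref{thm:modulidef} that $[K^{\mathrm r} k_0 : K_0^{\mathrm r} k_0] = 2$; in particular $K^{\mathrm r} k_0 \neq K_0^{\mathrm r} k_0$. The implication (3)$\Rightarrow$(2) is trivial since $K_0^{\mathrm r} \subseteq K^{\mathrm r}$, and (2)$\Rightarrow$(1) is immediate from the first sentence of Theorem~\ref{thm:modulidef}: if $k_0 \subseteq K^{\mathrm r}$ then $K^{\mathrm r} k_0 = K^{\mathrm r}$ is a field of definition. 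So the only real content is (1)$\Rightarrow$(3), i.e.\ showing that if $K^{\mathrm r}$ is a field of definition then $k_0 \subseteq K_0^{\mathrm r}$.

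For (1)$\Rightarrow$(3) I would argue as follows. Since the field of moduli is contained in every field of definition (this is the definition of field of moduli via the point $x = (I_n(C))_n$ being Galois-stable, as set up in Section~\ref{sec:mestre}), hypothesis (1) gives $k_0 \subseteq K^{\mathrm r}$, hence $K^{\mathrm r} k_0 = K^{\mathrm r}$. Therefore $[K^{\mathrm r} : K_0^{\mathrm r} k_0] = 2$ by Theorem~\ref{thm:modulidef}, which forces $K_0^{\mathrm r} k_0 = K_0^{\mathrm r}$ (the only intermediate field of the quadratic extension $K^{\mathrm r}/K_0^{\mathrm r}$ strictly below $K^{\mathrm r}$ is $K_0^{\mathrm r}$ itself, and $K_0^{\mathrm r} k_0$ contains $K_0^{\mathrm r}$ and is contained in $K^{\mathrm r} k_0 = K^{\mathrm r}$). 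Hence $k_0 \subseteq K_0^{\mathrm r}$, which is (3). This closes the cycle (1)$\Rightarrow$(3)$\Rightarrow$(2)$\Rightarrow$(1).

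For the non-Galois case I would add the equivalence with (4). Clearly (4)$\Rightarrow$(3). For (3)$\Rightarrow$(4), assume $k_0 \subseteq K_0^{\mathrm r}$; then $K_0^{\mathrm r} k_0 = K_0^{\mathrm r}$, so $[K^{\mathrm r} k_0 : K_0^{\mathrm r}] = 2$ by Theorem~\ref{thm:modulidef}, whence $[K^{\mathrm r} : k_0] \le 2\cdot[K^{\mathrm r}:K^{\mathrm r}k_0]$ — but more directly, Proposition~\ref{prop:wamelenmissing} tells us $K_0^{\mathrm r} \subseteq k_0$ in the non-Galois case, and combining $K_0^{\mathrm r} \subseteq k_0 \subseteq K_0^{\mathrm r}$ gives $k_0 = K_0^{\mathrm r}$, which is (4). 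In fact Proposition~\ref{prop:wamelenmissing} also shows directly that (3)$\Leftrightarrow$(4) unconditionally in the non-Galois case, since it always provides the reverse inclusion $K_0^{\mathrm r}\subseteq k_0$.

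I expect the main (only) subtlety to be bookkeeping about which field contains which and making sure the quadratic-extension argument in (1)$\Rightarrow$(3) is watertight — namely that $K^{\mathrm r}/K_0^{\mathrm r}$ genuinely has no proper intermediate fields other than the endpoints, which is automatic for a degree-$2$ extension. Everything else is a formal chase through inclusions, invoking Theorem~\ref{thm:modulidef} for the degree equality, Proposition~\ref{prop:wamelenmissing} for the non-Galois lower bound, and the general fact that the field of moduli sits inside every field of definition. No new computation is needed.
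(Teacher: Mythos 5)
Your proposal is correct and follows essentially the same route as the paper: both rest on the fact that the field of moduli lies in every field of definition (making $1\Rightarrow 2$ and $3\Rightarrow 2$ immediate), on the degree statement $[K^{\mathrm{r}}k_0:K^{\mathrm{r}}_0k_0]=2$ of Theorem~\ref{thm:modulidef} to pass from $2$ to $1$ and $3$, and on Proposition~\ref{prop:wamelenmissing} for the equivalence with $4$ in the non-Galois case. The only difference is cosmetic (you close the cycle as $1\Rightarrow 3\Rightarrow 2\Rightarrow 1$, with your $1\Rightarrow 3$ factoring through $2$ anyway), so no further comment is needed.
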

\begin{proof}
The implications $1\Rightarrow 2$ and $3\Rightarrow 2$ are trivial,
so assume 2 is true. Then Theorem~\ref{thm:modulidef}
states that 1 holds and that $[K^{\mathrm{r}}:K^{\mathrm{r}}_0k_0]=2$ holds,
so 3 also holds.

In the non-Galois case, Proposition~\ref{prop:wamelenmissing}
gives $4\Leftrightarrow 3$.
\end{proof}

\begin{remark}
The main theorem of complex multiplication
gives the Galois group of $k_0K^r/K^r$ as an explicit
quotient of the class group of~$K^r$.
In particular, the conditions of Corollary~\ref{cor:modulidef}
are equivalent to that quotient being trivial.
\end{remark}

The following lemma justifies that we worked under the assumption
$\mathrm{Aut}(C_{\overline{k}})=\{1,\iota\}$ in this paper.

\begin{lemma}\label{lem:aut}

Suppose $C$ is a curve of genus two with CM by an order
$\mathcal{O}\subset K$, and suppose that we are in case
1 or 2 as in Section~\ref{ssec:cases}.
Then either $\mathcal{O}=\ZZ[\zeta_5]$ and $C$
is isomorphic over $\kbar$ to the curve $y^2=x^5-1$ 
(in particular we already know a small model) or
we have $\mathrm{Aut}(C_{\kbar})=\{1, \iota\}$.
\end{lemma}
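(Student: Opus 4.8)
The plan is to use the classification of automorphism groups of genus-two curves together with the structure of the endomorphism algebra of the Jacobian. Recall that for a genus-two curve $C$ over an algebraically closed field of characteristic zero, $\Aut(C_{\kbar})$ always contains $\{1,\iota\}$, and $\Aut(C_{\kbar})/\{1,\iota\}$ is isomorphic to one of $1$, $C_2$, $C_2\times C_2$, $D_6$, $D_{12}$, or $S_4$ (see, e.g., the classification going back to Bolza, or Igusa~\cite{igusa}). Since we want to rule out $\Aut(C_{\kbar})\neq\{1,\iota\}$, I would first observe that each of the larger automorphism groups forces a special shape on $\End(J(C)_{\kbar})$. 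Concretely: if $\Aut(C_{\kbar})\supsetneq\{1,\iota\}$, then $J(C)$ has extra automorphisms too, or splits (up to isogeny) as a product of elliptic curves, and in either case the endomorphism algebra $\End^0(J(C)_{\kbar})$ is either non-commutative, or contains a field larger than a quartic CM-field, or is a product of two imaginary quadratic fields. The last possibility is exactly the biquadratic case, which we have excluded by assuming we are in case 1 or 2 of Section~\ref{ssec:cases}; the others are incompatible with $\End(J(C)_{\kbar})\cong\mathcal{O}_K$ with $K$ a quartic field.

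Next I would go through the short list of possibilities for the reduced automorphism group $G=\Aut(C_{\kbar})/\{1,\iota\}$ one at a time. For $G$ containing an element of order $\geq 3$ (the cases $D_6$, $D_{12}$, $S_4$, and the cyclic $C_3$-containing subcases), the Jacobian acquires an automorphism of order $3$, $4$, $5$, or $6$ not coming from $\pm 1$; combined with CM by $\mathcal{O}_K$ this forces a root of unity of that order to lie in $K$. A quartic CM-field contains only the roots of unity $\pm 1$ unless $K=\QQ(\zeta_5)$ or $K=\QQ(\zeta_8)$ or $K=\QQ(\zeta_{12})$; of these, only $\QQ(\zeta_5)$ is a non-biquadratic quartic CM-field (indeed it is the unique cyclic one that is also generated by a root of unity), while $\QQ(\zeta_8)$ and $\QQ(\zeta_{12})$ are biquadratic and hence excluded. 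This isolates $K=\QQ(\zeta_5)$, i.e.\ $\mathcal{O}=\ZZ[\zeta_5]$, as the only surviving case; and there the curve with CM by $\ZZ[\zeta_5]$ is classically known to be $y^2=x^5-1$ (its Jacobian is the Jacobian of the Fermat-type quotient, with $\zeta_5$ acting by $x\mapsto\zeta_5 x$). For the remaining case $G=C_2$ or $G=C_2\times C_2$ (extra involutions but no higher-order automorphism), I would argue that an extra involution makes $J(C)$ isogenous to a product $E_1\times E_2$ of elliptic curves, so $\End^0(J(C)_{\kbar})$ contains a product of two quadratic fields; since this algebra must be a quartic CM-\emph{field}, this is impossible — and in fact $E_1,E_2$ would then both have CM and $J(C)$ would be biquadratic-type, again excluded.

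Assembling these: under the hypothesis that we are in case 1 or 2, the only way to have $\Aut(C_{\kbar})\neq\{1,\iota\}$ is $\mathcal{O}=\ZZ[\zeta_5]$ with $C\cong_{\kbar} \{y^2=x^5-1\}$, which is the stated dichotomy. The main obstacle I expect is the bookkeeping in the step linking each automorphism group on the list to a constraint on $\End^0(J(C)_{\kbar})$ — in particular being careful that the extra automorphism of $C$ really does induce the expected automorphism of $J(C)$ (it does, functorially, via Albanese/Picard functoriality), that an extra involution genuinely forces a splitting of the Jacobian up to isogeny (the standard Prym/norm-map argument), and that one has correctly enumerated which cyclotomic fields of degree dividing $4$ over $\QQ$ are non-biquadratic quartic CM-fields. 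None of these is deep, but the case analysis must be exhaustive for the lemma to hold as stated.
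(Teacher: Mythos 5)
Your argument is correct in substance, but it takes a genuinely different and heavier route than the paper. The paper's proof is three lines: by Torelli, $\Aut(C_{\kbar})$ is the automorphism group of the principally polarised abelian surface $J(C)$, hence consists of finite-order units, i.e.\ roots of unity, in $\mathcal{O}=\End(J(C)_{\kbar})$; in cases 1 and 2 the only order in such a field $K$ containing a root of unity other than $\pm 1$ is $\ZZ[\zeta_5]$ (any extra root of unity generates either an imaginary quadratic subfield, impossible since the unique quadratic subfield $K_0$ is real, or all of $K$, forcing $K=\QQ(\zeta_5)$ as $\QQ(\zeta_8)$ and $\QQ(\zeta_{12})$ are biquadratic); and since $\ZZ[\zeta_5]$ has class number one there is a unique curve with CM by it, which must be $y^2=x^5-1$ because that curve has an automorphism of order $10$. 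This handles extra involutions and higher-order automorphisms uniformly, with no need for the Bolza/Igusa classification or for the splitting argument: an extra involution would already be a third square root of $1$ in an order of a field, which cannot exist. Your route buys nothing extra here and requires more bookkeeping, which shows in a few places that would need patching: your list of reduced automorphism groups omits $C_5$ (exactly the group of the exceptional curve $y^2=x^5-1$), though your later consideration of automorphisms of order $5$ rescues the case analysis; your blanket claim that a quartic CM-field contains roots of unity beyond $\pm1$ only if it is $\QQ(\zeta_5)$, $\QQ(\zeta_8)$ or $\QQ(\zeta_{12})$ is false as stated (e.g.\ $\QQ(i,\sqrt{5})$ contains $i$), although it is harmless once biquadratic fields are excluded; in the extra-involution case, if $E_1\sim E_2$ the endomorphism algebra is $\mathrm{M}_2$ of an imaginary quadratic field rather than a product of two quadratic fields, and one needs the small extra observation that any quartic field embedding in it contains that imaginary quadratic field and is therefore biquadratic; and finally you assert the uniqueness of the curve with CM by $\ZZ[\zeta_5]$ as ``classically known'', where the paper derives it cleanly from the class number one of $\ZZ[\zeta_5]$ together with the order-$10$ automorphism of $y^2=x^5-1$.
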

\begin{proof}
The automorphisms of $C$ correspond to automorphisms
of the principally polarised abelian variety~$J(C)$,
which are roots of unity in $\mathcal{O}=\End(J(C)_{\kbar})$.
The only order in cases 1 and 2 with roots of unity
is $\ZZ[\zeta_5]$, and since it has class number
one, there is only one curve with CM by that ring
up to isomorphism over the algebraic closure.
That curve is the curve $y^2=x^5-1$,
since that has an automorphism of order~$10$.
\end{proof}

\subsection{Completeness}\label{ssec:completeness}

As for completeness, our tables contain all fields
in the Echidna database satisfying $[k_0:\QQ]\leq 2$.
In particular, by Corollary~\ref{cor:modulidef},
our list contains all fields
for which the curve has a model over $K^\mathrm{r}$
as far as the Echidna database has them.
The proof of completeness of this list of fields
is a work in progress of
P{\i}nar K{\i}l{\i}{\c{c}}er.

\vfill\newpage

\newcommand{\curvewidth}{12.5cm}

\addtolength{\voffset}{3cm}
\addtolength{\hoffset}{-1.1in}
\addtolength{\textwidth}{160pt}
\begin{landscape}

\centering
\begin{longtable}{|l|l|l|l|l|l|c|} 
\caption*{Table 1a}\\ \hline
\label{table1astart}

DAB &  $\Delta_{\mathrm{stable}}$ &
$\Delta(C)/\Delta_{\mathrm{stable}}$ & $f$, where $C:y^2=f$  \\ \hline
\endfirsthead
\multicolumn{6}{c}%
{\textit{Table 1a, continued from previous page}}
\\
\hline
DAB & $\Delta_{\mathrm{stable}}$ &
$\Delta(C)/\Delta_{\mathrm{stable}}$ & $f$, where $C:y^2=f$ \\ \hline
\endhead
\hline
\multicolumn{6}{c}{\textit{Continued on next page}}
\\
\endfoot
\hline
\endlastfoot

$ \left[5, 5, 5\right] $ & $ \begin{array}{l} 1 \end{array} $ & $ \begin{array}{l} 2^{8} \cdot 5^{5} \end{array}$ & \parbox{\curvewidth}{$ x^{5} - 1 $}     \\  \hline
\multirow{2}{*}{\raggedleft $ \left[5, 10, 20\right] $ } & $ \begin{array}{l} 2^{12} \end{array} $ & $ \begin{array}{l} 2^{10} \cdot 5^{5} \end{array}$ & \parbox{\curvewidth}{$ 4 x^{5} - 30 x^{3} + 45 x - 22 $}     \\  \cline{2-5}
 & $ \begin{array}{l} 2^{12} \cdot 11^{12} \end{array} $ & $ \begin{array}{l} 2^{10} \cdot 5^{5} \end{array}$ & \parbox{\curvewidth}{$ 8 x^{6} + 52 x^{5} - 250 x^{3} + 321 x - 131 $}     \\  \hline
\multirow{2}{*}{\raggedleft  $ \left[5, 65, 845\right] $}  & $ \begin{array}{l} 11^{12} \end{array} $ & $ \begin{array}{l} 2^{20} \cdot 5^{5} \cdot 13^{10} \end{array}$ & \parbox{\curvewidth}{$ 8 x^{6} - 112 x^{5} - 680 x^{4} + 8440 x^{3} + 28160 x^{2} - 55781 x + 111804 $}     \\  \cline{2-5}
 & $ \begin{array}{l} 31^{12} \cdot 41^{12} \end{array} $ & $ \begin{array}{l} 2^{20} \cdot 5^{5} \cdot 13^{10} \end{array}$ & \parbox{\curvewidth}{$ -9986 x^{6} + 73293 x^{5} - 348400 x^{3} - 118976 x - 826072 $}     \\  \hline
\multirow{2}{*}{\raggedleft  $ \left[5, 85, 1445\right] $} & $ \begin{array}{l} 71^{12} \end{array} $ & $ \begin{array}{l} 2^{20} \cdot 5^{5} \cdot 17^{10} \end{array}$ & \parbox{\curvewidth}{$ -73 x^{6} + 1005 x^{5} + 14430 x^{4} - 130240 x^{3} - 1029840 x^{2} + 760976 x - 2315640 $}     \\  \cline{2-5}
 & $ \begin{array}{l} 11^{12} \cdot 41^{12} \cdot 61^{12} \end{array} $ & $ \begin{array}{l} 2^{20} \cdot 5^{5} \cdot 17^{10} \end{array}$ & \parbox{\curvewidth}{$ 2160600 x^{6} - 8866880 x^{5} + 2656360 x^{4} - 582800 x^{3} + 44310170 x^{2} + 6986711 x - 444408 $}     \\  \hline
$ \left[8, 4, 2\right] $ & $ \begin{array}{l} 2^{6} \end{array} $ & $ \begin{array}{l} 2^{15} \end{array}$ & \parbox{\curvewidth}{$ x^{5} - 3 x^{4} - 2 x^{3} + 6 x^{2} + 3 x - 1 $}     \\  \hline
\multirow{2}{*}{\raggedleft  $ \left[8, 20, 50\right] $} & $ \begin{array}{l} 2^{6} \cdot 7^{12} \cdot 23^{12} \end{array} $ & $ \begin{array}{l} 2^{15} \cdot 5^{10} \end{array}$ & \parbox{\curvewidth}{$ -8 x^{6} - 530 x^{5} + 160 x^{4} + 64300 x^{3} - 265420 x^{2} - 529 x $}     \\  \cline{2-5}
 & $ \begin{array}{l} 2^{6} \cdot 7^{12} \cdot 17^{12} \cdot 23^{12} \end{array} $ & $ \begin{array}{l} 2^{15} \cdot 5^{10} \end{array}$ & \parbox{\curvewidth}{$ 4116 x^{6} + 64582 x^{5} + 139790 x^{4} - 923200 x^{3} + 490750 x^{2} + 233309 x - 9347 $}     \\  \hline
$ \left[13, 13, 13\right] $ & $ \begin{array}{l} 1 \end{array} $ & $ \begin{array}{l} 2^{20} \cdot 13^{5} \end{array}$ & \parbox{\curvewidth}{$ x^{6} - 8 x^{4} - 8 x^{3} + 8 x^{2} + 12 x - 8 $}     \\  \hline
\multirow{2}{*}{\raggedleft  $ \left[13, 26, 52\right] $} & $ \begin{array}{l} 2^{12} \cdot 3^{12} \cdot 23^{12} \end{array} $ & $ \begin{array}{l} 2^{10} \cdot 13^{5} \end{array}$ & \parbox{\curvewidth}{$ -243 x^{6} - 2223 x^{5} - 1566 x^{4} + 19012 x^{3} + 903 x^{2} - 19041 x - 5882 $}     \\  \cline{2-5}
 & $ \begin{array}{l} 2^{12} \cdot 3^{12} \cdot 23^{12} \cdot 131^{12} \end{array} $ & $ \begin{array}{l} 2^{10} \cdot 13^{5} \end{array}$ & \parbox{\curvewidth}{$ 59499 x^{6} - 125705 x^{5} - 801098 x^{4} + 1067988 x^{3} + 2452361 x^{2} + 707297 x - 145830 $}     \\  \hline
\multirow{2}{*}{\raggedleft  $ \left[13, 65, 325\right] $} & $ \begin{array}{l} 3^{12} \end{array} $ & $ \begin{array}{l} 2^{20} \cdot 5^{10} \cdot 13^{5} \end{array}$ & \parbox{\curvewidth}{$ 36 x^{5} - 1040 x^{3} + 1560 x^{2} + 1560 x + 1183 $}     \\  \cline{2-5}
 & $ \begin{array}{l} 3^{12} \cdot 53^{12} \end{array} $ & $ \begin{array}{l} 2^{20} \cdot 5^{10} \cdot 13^{5} \end{array}$ & \parbox{\curvewidth}{$ -1323 x^{6} - 1161 x^{5} + 9360 x^{4} + 9590 x^{3} - 34755 x^{2} + 1091 x + 32182 $}     \\  \hline
$ \left[29, 29, 29\right] $ & $ \begin{array}{l} 5^{12} \end{array} $ & $ \begin{array}{l} 2^{20} \cdot 29^{5} \end{array}$ & \parbox{\curvewidth}{$ 43 x^{6} - 216 x^{5} + 348 x^{4} - 348 x^{2} - 116 x $}     \\  \hline
$ \left[37, 37, 333\right] $ & $ \begin{array}{l} 3^{12} \cdot 11^{12} \end{array} $ & $ \begin{array}{l} 2^{20} \cdot 37^{5} \end{array}$ & \parbox{\curvewidth}{$ -68 x^{6} + 57 x^{5} + 84 x^{4} - 680 x^{3} + 72 x^{2} - 1584 x - 4536 $}     \\  \hline
$ \left[53, 53, 53\right] $ & $ \begin{array}{l} 17^{12} \cdot 29^{12} \end{array} $ & $ \begin{array}{l} 2^{20} \cdot 53^{5} \end{array}$ & \parbox{\curvewidth}{$ -3800 x^{6} + 15337 x^{5} + 160303 x^{4} - 875462 x^{3} + 896582 x^{2} - 355411 x + 50091 $}     \\  \hline
$ \left[61, 61, 549\right] $ & $ \begin{array}{l} 3^{24} \cdot 5^{12} \cdot 41^{12} \end{array} $ & $ \begin{array}{l} 2^{20} \cdot 61^{5} \end{array}$ & \parbox{\curvewidth}{$ 40824 x^{6} + 103680 x^{5} - 67608 x^{4} - 197944 x^{3} - 17574 x^{2} + 41271 x + 103615 $}     \\  \hline

\end{longtable}

\renewcommand{\curvewidth}{10.5cm}

\centering
\begin{longtable}{|l|l|l|l|l|l|c|} 
\caption*{Table 1b}\\ \hline

DAB &  $\Delta_{\mathrm{stable}}$ &
$\Delta(C)/\Delta_{\mathrm{stable}}$ & $f$, where $C:y^2=f$ \\ \hline
\endfirsthead
\multicolumn{6}{c}%
{\textit{Table 1b, continued from previous page}}
\\
\hline
DAB & $\Delta_{\mathrm{stable}}$ &
$\Delta(C)/\Delta_{\mathrm{stable}}$ & $f$, where $C:y^2=f$ \\ \hline
\endhead
\hline
\multicolumn{6}{c}{\textit{Continued on next page}}
\\
\endfoot
\hline
\endlastfoot
\multirow{2}{*}{ $\raggedleft \left[5, 15, 45\right] $ }  & $ \begin{array}{l} (2)^{12} \cdot (3)^{6} \end{array} $ & $ \begin{array}{l} (2 a + 1)_{5}^{10} \end{array}$ & \parbox{\curvewidth}{$ -x^{6} + \left(-3 a - 3\right) x^{5} + \left(5 a + 15\right) x^{3} + \left(-15 a - 3\right) x - 4 a + 1 $}     \\  \cline{2-5}
 & $ \begin{array}{l} (2)^{12} \cdot (3)^{6} \cdot (5 a + 2)_{31}^{12} \end{array} $ & $ \begin{array}{l} (2 a + 1)_{5}^{10} \end{array}$ & \parbox{\curvewidth}{$ \left(-2 a + 3\right) x^{6} + \left(-9 a + 18\right) x^{5} + \left(15 a - 70\right) x^{3} + \left(39 a + 54\right) x - 52 a - 1 $}     \\  \hline
\multirow{2}{*}{ $\raggedleft \left[5, 30, 180\right] $} & $ \begin{array}{l} (3 a + 2)_{11}^{12} \cdot (2)^{18} \cdot (3)^{6} \cdot (5 a + 2)_{31}^{12} \end{array} $ & $ \begin{array}{l} (2 a + 1)_{5}^{10} \end{array}$ & \parbox{\curvewidth}{$ 684 x^{6} + \left(390 a + 90\right) x^{5} + \left(24 a - 3138\right) x^{4} + \left(217 a + 401\right) x^{3} +\\ \left(96 a + 3918\right) x^{2} + \left(-2112 a - 1698\right) x + 284 a + 432 $}     \\  \cline{2-5}
 & $ \begin{array}{l} (3 a + 1)_{11}^{12} \cdot (2 a - 11)_{139}^{12} \cdot (4 a + 3)_{19}^{12} \\ \cdot (2)^{18}  \cdot (3)^{6} \cdot (5 a + 2)_{31}^{12} \end{array} $ & $ \begin{array}{l} (2 a + 1)_{5}^{10} \end{array}$ & \parbox{\curvewidth}{$ \left(927 a + 2906\right) x^{6} + \left(5541 a + 18822\right) x^{5} + \left(-33535 a - 124380\right) x^{3} +\\ \left(33417 a + 183726\right) x + 12641 a - 31928 $}     \\  \hline
\multirow{2}{*}{ $\raggedleft \left[5, 35, 245\right] $} & $ \begin{array}{l} (3 a + 2)_{11}^{12} \cdot (2)^{12} \cdot (a + 6)_{29}^{12}\\ \cdot (7)^{6} \cdot (a + 9)_{71}^{12} \end{array} $ & $ \begin{array}{l} (2 a + 1)_{5}^{10} \end{array}$ & \parbox{\curvewidth}{$ \left(-4527 a - 783\right) x^{6} + \left(6392 a + 7811\right) x^{5} + \left(-4500 a - 17085\right) x^{3} + \\ \left(-6948 a + 9783\right) x - 1687 a + 39 $}     \\  \cline{2-5}
 & $  \begin{array}{l} (3 a + 1)_{11}^{12} \cdot (11 a + 5)_{151}^{12} \\ \cdot (2 a + 15)_{191}^{12} \cdot (2)^{12} \\ \cdot (a - 5)_{29}^{12} \cdot (7)^{6} \end{array} $ & $ \begin{array}{l} (2 a + 1)_{5}^{10} \end{array}$ & \parbox{\curvewidth}{$ \left(-435 a - 521\right) x^{6} + \left(353 a + 110\right) x^{5} + \left(131927 a + 189531\right) x^{4} + \\  \left(-696187 a - 952511\right) x^{3} + \left(-10094248 a - 15393369\right) x^{2} + \\ \left(94869598 a + 145990333\right) x - 210533420 a - 329328479 $}      \\ \hline
\multirow{2}{*}{$\raggedleft \left[5, 105, 2205\right] $} & $ \begin{array}{l} (3 a + 1)_{11}^{12}  \cdot (3)^{6} \cdot (7)^{6} \end{array} $ & $ \begin{array}{l} (2)^{20} \cdot (2 a + 1)_{5}^{10} \end{array}$ & \parbox{\curvewidth}{$ \left(-5 a + 4\right) x^{6} + \left(-81 a + 30\right) x^{5} + \left(-135 a + 210\right) x^{4} + \left(450 a - 210\right) x^{3} +\\ \left(360 a - 1785\right) x^{2} + \left(600 a + 15\right) x - 950 a + 5625 $}     \\  \cline{2-5}
 & $ \begin{array}{l} (a + 11)_{109}^{12} \cdot (3 a + 2)_{11}^{12} \cdot (3)^{6} \\ \cdot (7)^{6} \cdot (8 a + 3)_{79}^{12} \end{array} $ & $ \begin{array}{l} (2)^{20} \cdot (2 a + 1)_{5}^{10} \end{array}$ & \parbox{\curvewidth}{$ \left(-3 a - 260\right) x^{6} + \left(1032 a + 1389\right) x^{5} + \left(19160 a + 8760\right) x^{3} +\\ \left(-16224 a + 163200\right) x + 162976 a + 114632 $}     \\  \hline
$ \left[8, 12, 18\right] $ & $ \begin{array}{l} (a)_{2}^{12} \cdot (3)^{6} \\ \cdot (2 a - 1)_{7}^{12} \cdot (2 a + 1)_{7}^{12} \end{array} $ & $ \begin{array}{l} (a)_{2}^{30} \end{array}$ & \parbox{\curvewidth}{$ \left(24 a - 54\right) x^{5} + \left(-66 a + 96\right) x^{4} + \left(-32 a + 220\right) x^{3} + \left(12 a - 312\right) x^{2} +\\ \left(96 a + 21\right) x - 5 a - 16 $}     \\  \hline
$ \left[17, 119, 3332\right] $ & $ \begin{array}{l} (2 a + 15)_{179}^{12} \cdot (a + 2)_{2}^{36} \\ \cdot (a - 1)_{2}^{12} \cdot (4 a + 7)_{43}^{12} \cdot (7)^{6} \end{array} $ & $ \begin{array}{l} (2 a + 1)_{17}^{10} \end{array}$ & \parbox{\curvewidth}{$ \left(213 a + 1875\right) x^{6} + \left(8071 a + 4059\right) x^{5} + \left(-1045 a + 58039\right) x^{4} + \\ \left(32898 a + 26657\right) x^{3} + \left(-12585 a + 3550\right) x^{2} + \left(-46889 a - 136176\right) x \\- 42057 a - 104692 $}     \\  \hline
\multirow{2}{*}{$\raggedleft \left[17, 255, 15300\right] $} & $ \begin{array}{l} (2 a - 5)_{19}^{12} \cdot (a + 2)_{2}^{24} \\ \cdot (a - 1)_{2}^{24} \cdot (3)^{6} \cdot (2 a + 31)_{883}^{12} \end{array} $ & $ \begin{array}{l} (2 a + 1)_{17}^{10} \cdot (5)^{10} \end{array}$ & \parbox{\curvewidth}{$ \left(-4264 a - 13208\right) x^{6} + \left(9516 a - 94116\right) x^{5} + \left(331770 a - 503670\right) x^{4} +\\ \left(-1195640 a + 1593625\right) x^{3} + \left(1141785 a - 2476410\right) x^{2} +\\ \left(-69927 a + 2540472\right) x - 301251 a - 1280828 $}     \\  \cline{2-5}
 & $ \begin{array}{l} (2 a + 3)_{13}^{12} \cdot (4 a + 17)_{157}^{12}  \cdot (2 a + 7)_{19}^{12}\\ \cdot (a + 2)_{2}^{12}  \cdot (a - 1)_{2}^{12} \cdot (3)^{6} \\ \cdot (4 a + 3)_{67}^{12} \cdot (2 a - 9)_{83}^{12}  \cdot (2 a + 11)_{83}^{12} \end{array} $ & $ \begin{array}{l} (2 a + 1)_{17}^{10} \cdot (5)^{10} \end{array}$ & \parbox{\curvewidth}{$ \left(3703196 a + 9037010\right) x^{6} + \\ \left(12666396 a + 36366348\right) x^{5} + \left(33133830 a + 56148570\right) x^{4} +\\ \left(35333760 a + 111063545\right) x^{3} + \left(71845845 a + 45282705\right) x^{2} + \\ \left(154100103 a - 105860229\right) x + 81081415 a - 36366223 $}     \\  \hline

\end{longtable}

\renewcommand{\curvewidth}{9.5cm}

\centering
\begin{longtable}{|l|l|l|l|l|l|c|} 
\caption*{Table 2b}\\ \hline

DAB & DAB$^r$ & $  a $ & $\Delta_{\mathrm{stable}}$ &
$\Delta(C)/\Delta_{\mathrm{stable}}$ & $f$, where
$C : y^2 = f$ \\ \hline
\endfirsthead
\multicolumn{6}{c}%
{\textit{Table 2b, continued from previous page}}
\\
\hline
DAB & DAB$^r$ & $  a $ & $\Delta_{\mathrm{stable}}$ &
$\Delta(C)/\Delta_{\mathrm{stable}}$ & $f$, where
$C : y^2 = f$ \\ \hline
\endhead
\hline
\multicolumn{6}{c}{\textit{Continued on next page}}
\\
\endfoot
\hline
\endlastfoot
\multirow{2}{*}{$ \left[5, 11, 29\right] $} & \multirow{2}{*}{$ \left[29, 7, 5\right] $} & \multirow{2}{*}{$ \alpha^{2} + 3 $} & $ \begin{array}{l} (2)^{12} \cdot (a - 1)_{5}^{12} \cdot (a + 1)_{7}^{12} \end{array} $ & $ \begin{array}{l} (a + 2)_{5}^{10} \end{array}$ & \parbox{\curvewidth}{$ \left(18 a + 60\right) x^{6} + \left(-76 a - 246\right) x^{5} + \left(127 a + 329\right) x^{4} + \\ \left(-77 a - 209\right) x^{3} + \left(-30 a + 155\right) x^{2} + \\ \left(29 a - 69\right) x + 71 a - 156 $}     \\  \cline{4-7}
& & & $ \begin{array}{l} (2)^{12} \cdot (a + 6)_{23}^{12} \cdot (a - 1)_{5}^{12} \end{array} $ & $ \begin{array}{l} (a + 2)_{5}^{10} \end{array}$ & \parbox{\curvewidth}{$ \left(2 a + 1\right) x^{6} + \left(-a - 26\right) x^{5} + \left(9 a + 38\right) x^{4} +  \left(-40 a - 25\right) x^{3} + \\ \left(-21 a - 37\right) x^{2} +  \left(100 a + 218\right) x + 102 a + 268 $}     \\  \hline
$ \left[5, 13, 41\right] $ & $ \left[41, 11, 20\right] $ & $ \alpha^{2} + 5 $ & $ \begin{array}{l} (a - 3)_{2}^{12} \end{array} $ & $ \begin{array}{l} (a + 4)_{2}^{20} \cdot (2 a - 5)_{5}^{10} \end{array}$ & \parbox{\curvewidth}{$ \left(-a + 3\right) x^{6} + \left(4 a - 8\right) x^{5} + 10 x^{4} + \left(-a + 20\right) x^{3} + \left(4 a + 5\right) x^{2} + \left(a + 4\right) x + 1 $}    \\  \hline
$ \left[5, 17, 61\right] $ & $ \left[61, 9, 5\right] $ & $ \alpha^{2} + 4 $ & $ \begin{array}{l} (a - 3)_{3}^{12} \end{array} $ & $ \begin{array}{l} (2)^{20} \cdot (a - 4)_{5}^{10} \end{array}$ & \parbox{\curvewidth}{$ \left(a + 4\right) x^{6} + \left(-8 a - 42\right) x^{5} + \left(37 a + 117\right) x^{4} +  \left(-20 a - 240\right) x^{3} + \\ \left(56 a - 9\right) x^{2} +  \left(22 a - 114\right) x + 9 a - 28 $}    \\  \hline
$ \left[5, 21, 109\right] $ & $ \left[109, 17, 45\right] $ & $ \alpha^{2} + 8 $ & $ \begin{array}{l} (a - 5)_{3}^{12} \cdot (3 a + 17)_{5}^{12} \end{array} $ & $ \begin{array}{l} (2)^{20} \cdot (3 a - 14)_{5}^{10} \end{array}$ & \parbox{\curvewidth}{$ \left(-28 a + 53\right) x^{6} + \left(-113 a + 913\right) x^{5} + \left(-495 a + 1890\right) x^{4} + \\ \left(-746 a + 3308\right) x^{3} + \left(-563 a + 3574\right) x^{2} + \\ \left(-378 a + 1069\right) x - 151 a - 227 $}    \\  \hline
$ \left[5, 26, 149\right] $ & $ \left[149, 13, 5\right] $ & $ \alpha^{2} + 6 $ & $ \begin{array}{l} (a + 7)_{5}^{12} \cdot (a - 5)_{7}^{12} \end{array} $ & $ \begin{array}{l} (2)^{20} \cdot (a - 6)_{5}^{10} \end{array}$ & \parbox{\curvewidth}{$ \left(-125 a - 875\right) x^{6} \\ + \left(-1375 a - 8575\right) x^{5} + \left(-9090 a - 62160\right) x^{4} + \\ \left(-38862 a - 251798\right) x^{3} + \left(-73257 a - 489843\right) x^{2} + \\ \left(-53235 a - 347403\right) x - 12896 a - 86314 $}    \\  \hline
\multirow{2}{*}{$ \left[5, 33, 261\right] $} & \multirow{2}{*}{$ \left[29, 21, 45\right] $} & \multirow{2}{*}{$ \frac{1}{3} \alpha^{2} + 3 $} & $ \begin{array}{l} (a + 5)_{13}^{12} \cdot (3)^{6} \end{array} $ & $ \begin{array}{l} (2)^{20} \cdot (a + 2)_{5}^{10} \end{array}$ & \parbox{\curvewidth}{$ \left(-27 a - 96\right) x^{5} + \left(-18 a - 51\right) x^{4} + \\ \left(-34 a - 58\right) x^{3} + \left(-18 a - 36\right) x^{2} - 15 x - 9 a - 27 $}     \\  \cline{4-7}
& & & $ \begin{array}{l} (3)^{6} \cdot (a)_{7}^{12} \end{array} $ & $ \begin{array}{l} (2)^{20} \cdot (a + 2)_{5}^{10} \end{array}$ & \parbox{\curvewidth}{$ \left(-3 a + 6\right) x^{5} - 90 x^{4} + \left(-128 a - 136\right) x^{3} + \left(-72 a - 744\right) x^{2} + \\ \left(-240 a - 240\right) x - 216 $}     \\  \hline
$ \left[5, 34, 269\right] $ & $ \left[269, 17, 5\right] $ & $ \alpha^{2} + 8 $ & $ \begin{array}{l} (a - 7)_{11}^{12} \cdot (2 a - 15)_{13}^{12} \\ \cdot (a + 9)_{5}^{12} \end{array} $ & $ \begin{array}{l} (2)^{20} \cdot (a - 8)_{5}^{10} \end{array}$ & \parbox{\curvewidth}{$ \left(-283 a + 2246\right) x^{6} + \\ \left(-4563 a + 33800\right) x^{5} + \left(-11932 a + 103166\right) x^{4} + \\ \left(127408 a - 1032304\right) x^{3} + \left(998576 a - 7558008\right) x^{2} + \\ \left(2439792 a - 18969664\right) x + 2110776 a - 16149072 $}    \\  \hline
$ \left[5, 41, 389\right] $ & $ \left[389, 37, 245\right] $ & $ \alpha^{2} + 18 $ & $ \begin{array}{l} (2 a + 21)_{11}^{12} \cdot (8 a + 83)_{17}^{12} \\ \cdot (5 a + 52)_{19}^{12} \cdot (3 a - 28)_{5}^{12} \end{array} $ & $ \begin{array}{l} (2)^{20} \cdot (3 a + 31)_{5}^{10} \end{array}$ & \parbox{\curvewidth}{$ \left(1248 a - 11685\right) x^{6} + \\ \left(-16097 a + 150611\right) x^{5} + \left(37185 a - 349530\right) x^{4} + \\ \left(250806 a - 2359968\right) x^{3} + \left(-972081 a + 9046728\right) x^{2} + \\ \left(-942318 a + 8701533\right) x + 4994791 a - 46866753 $}    \\  \hline
\multirow{2}{*}{$ \left[5, 66, 909\right] $} & \multirow{2}{*}{$ \left[101, 33, 45\right] $} & \multirow{2}{*}{$ \frac{1}{3} \alpha^{2} + 5 $} & $ \begin{array}{l} (a - 2)_{19}^{12} \cdot (3)^{6} \\ \cdot (2 a + 13)_{43}^{12} \cdot (a - 4)_{5}^{12} \end{array} $ & $ \begin{array}{l} (2)^{20} \cdot (a + 5)_{5}^{10} \end{array}$ & \parbox{\curvewidth}{$ \left(-340 a - 1674\right) x^{6} + \\ \left(-4179 a - 26820\right) x^{5} + \left(-26433 a - 118800\right) x^{4} + \\ \left(-38358 a - 315240\right) x^{3} + \left(-46686 a - 41130\right) x^{2} + \\ \left(40761 a - 15348\right) x - 13013 a + 39100 $}     \\  \cline{4-7}
& & & $ \begin{array}{l} (3)^{6} \cdot (a + 8)_{31}^{12}\\ \cdot (2 a - 7)_{37}^{12} \cdot (a - 4)_{5}^{12} \end{array} $ & $ \begin{array}{l} (2)^{20} \cdot (a + 5)_{5}^{10} \end{array}$ & \parbox{\curvewidth}{$ \left(-6120 a - 36189\right) x^{6}  +  \left(-22143 a - 102375\right) x^{5} + \\ \left(-21378 a - 184140\right) x^{4} +  \left(-31356 a - 65810\right) x^{3} +\\ \left(765 a - 81765\right) x^{2} + \\ \left(-3783 a + 6192\right) x $}     \\  \hline
$ \left[8, 10, 17\right] $ & $ \left[17, 5, 2\right] $ & $ \alpha^{2} + 2 $ & $ \begin{array}{l} (a + 2)_{2}^{6} \end{array} $ & $ \begin{array}{l} (a + 2)_{2}^{45} \cdot (a - 1)_{2}^{20} \end{array}$ & \parbox{\curvewidth}{$ x^{6} + \left(2 a + 4\right) x^{5} + \left(3 a + 14\right) x^{4} +  \left(10 a + 8\right) x^{3} +\\ \left(-9 a + 32\right) x^{2} +  \left(16 a - 16\right) x - 4 a + 8 $}     \\  \hline
$ \left[8, 18, 73\right] $ & $ \left[73, 9, 2\right] $ & $ \alpha^{2} + 4 $ & $ \begin{array}{l} (a - 4)_{2}^{6} \cdot (a + 5)_{2}^{12} \\ \cdot (4 a - 15)_{3}^{12} \end{array} $ & $ \begin{array}{l} (a - 4)_{2}^{45} \end{array}$ & \parbox{\curvewidth}{$ \left(a + 5\right) x^{6} + \left(28 a + 132\right) x^{5} + \left(214 a + 1026\right) x^{4} + \\ \left(349 a + 1658\right) x^{3} + \left(259 a + 1242\right) x^{2} + \\ \left(47 a + 222\right) x - 3 a - 14 $}     \\  \hline
$ \left[8, 22, 89\right] $ & $ \left[89, 11, 8\right] $ & $ \alpha^{2} + 5 $ & $ \begin{array}{l} (a - 4)_{2}^{12} \cdot (a + 5)_{2}^{6} \\ \cdot (4 a - 17)_{5}^{12} \end{array} $ & $ \begin{array}{l} (a + 5)_{2}^{45} \end{array}$ & \parbox{\curvewidth}{$ \left(a - 4\right) x^{6} + \left(8 a - 36\right) x^{5} + \left(16 a - 62\right) x^{4} +  \left(-13 a + 57\right) x^{3} + \\ \left(-17 a + 73\right) x^{2} +  \left(13 a - 57\right) x - a + 5 $}     \\  \hline
$ \left[8, 34, 281\right] $ & $ \left[281, 17, 2\right] $ & $ \alpha^{2} + 8 $ & $ \begin{array}{l} (42 a - 331)_{17}^{12} \cdot (a - 8)_{2}^{6} \\ \cdot (a + 9)_{2}^{24} \cdot (76 a + 675)_{5}^{12}\\ \cdot (8 a - 63)_{7}^{12} \end{array} $ & $ \begin{array}{l} (a - 8)_{2}^{45} \end{array}$ & \parbox{\curvewidth}{$ \left(-15024 a + 118185\right) x^{6} + \\ \left(310153 a - 2435026\right) x^{5} + \left(-2658057 a + 20990488\right) x^{4} + \\ \left(12047831 a - 97400942\right) x^{3} + \left(-33280854 a + 231380920\right) x^{2} + \\ \left(34989188 a - 413796872\right) x - 37610304 a + 81055944 $}     \\  \hline
$ \left[8, 38, 233\right] $ & $ \left[233, 19, 32\right] $ & $ \alpha^{2} + 9 $ & $ \begin{array}{l} (38 a - 271)_{13}^{12} \cdot (a + 8)_{2}^{12} \\ \cdot (a - 7)_{2}^{6} \cdot (8 a + 65)_{7}^{12}\\ \cdot (8 a - 57)_{7}^{12} \end{array} $ & $ \begin{array}{l} (a - 7)_{2}^{45} \end{array}$ & \parbox{\curvewidth}{$ \left(-166628 a - 1355047\right) x^{6} + \\ \left(-354121 a - 2879769\right) x^{5} + \left(-318274 a - 2588269\right) x^{4} + \\ \left(-153661 a - 1249743\right) x^{3} + \left(-41827 a - 339754\right) x^{2} + \\ \left(-6158 a - 48444\right) x - 441 a - 2400 $}     \\  \hline
\multirow{2}{*}{$ \left[8, 50, 425\right] $} & \multirow{2}{*}{$ \left[17, 25, 50\right] $} & \multirow{2}{*}{$ \frac{1}{5} \alpha^{2} + 2 $} & $ \begin{array}{l} (a + 2)_{2}^{6} \cdot (a - 1)_{2}^{12} \cdot (5)^{6} \end{array} $ & $ \begin{array}{l} (a + 2)_{2}^{45} \cdot (5)^{15} \end{array}$ & \parbox{\curvewidth}{$ \left(34 a + 80\right) x^{6} + \left(140 a + 224\right) x^{5} + \left(110 a - 220\right) x^{4} + \\ \left(-455 a + 220\right) x^{3} + \left(-5 a + 190\right) x^{2} + \\ \left(91 a - 104\right) x + 254 a - 395 $}     \\  \cline{4-7}
& & & $ \begin{array}{l} (2 a + 3)_{13}^{12} \cdot (2 a - 5)_{19}^{12} \\ \cdot (a + 2)_{2}^{6} \cdot (a - 1)_{2}^{24} \cdot (5)^{6} \end{array} $ & $ \begin{array}{l} (a + 2)_{2}^{45} \cdot (5)^{15} \end{array}$ & \parbox{\curvewidth}{$ \left(-1455 a + 1511\right) x^{6} + \\ \left(-1004 a - 2656\right) x^{5} + \left(-19100 a + 20290\right) x^{4} + \\ \left(-3805 a - 4380\right) x^{3} + \left(-72745 a + 108600\right) x^{2} + \\ \left(-7451 a + 10748\right) x - 99295 a + 155108 $}     \\  \hline
\multirow{2}{*}{$ \left[8, 66, 1017\right] $} & \multirow{2}{*}{$ \left[113, 33, 18\right] $} & \multirow{2}{*}{$ \frac{1}{3} \alpha^{2} + 5 $} & $ \begin{array}{l} (4 a - 19)_{11}^{12} \cdot (a + 6)_{2}^{12} \\ \cdot (a - 5)_{2}^{6} \cdot (3)^{6}\\  \cdot (8 a + 47)_{41}^{12} \cdot (6 a + 35)_{7}^{12}\\ \cdot (6 a - 29)_{7}^{12} \end{array} $ & $ \begin{array}{l} (a - 5)_{2}^{45} \end{array}$ & \parbox{\curvewidth}{$ \left(-4215 a - 14698\right) x^{6} + \\ \left(30036 a + 338652\right) x^{5} + \left(-549576 a - 134610\right) x^{4} + \\ \left(-2945519 a + 22716733\right) x^{3} + \left(12849441 a - 76601511\right) x^{2} + \\ \left(234523575 a - 1115687637\right) x - 843111919 a + 4054444133 $}     \\  \cline{4-7}
& & & $ \begin{array}{l} (a + 6)_{2}^{12} \cdot (a - 5)_{2}^{6}\\ \cdot (3)^{6} \cdot (2 a + 13)_{31}^{12}\\ \cdot (28 a + 163)_{53}^{12} \cdot (6 a + 35)_{7}^{12} \end{array} $ & $ \begin{array}{l} (a - 5)_{2}^{45} \end{array}$ & \parbox{\curvewidth}{$ \left(-27 a - 2538\right) x^{6} + \\ \left(7230 a + 8412\right) x^{5} + \left(-3867 a - 272622\right) x^{4} + \\ \left(121693 a + 458725\right) x^{3} + \left(-1686144 a + 6014715\right) x^{2} + \\ \left(-5324007 a + 27892107\right) x + 110392412 a - 532554277 $}     \\  \hline
$ \left[13, 9, 17\right] $ & $ \left[17, 15, 52\right] $ & $ \alpha^{2} + 7 $ & $ \begin{array}{l} (a + 2)_{2}^{12} \end{array} $ & $ \begin{array}{l} (2 a - 1)_{13}^{10} \cdot (a - 1)_{2}^{20} \end{array}$ & \parbox{\curvewidth}{$ \left(a - 2\right) x^{6} + \left(-8 a + 8\right) x^{5} + \left(14 a - 32\right) x^{4} +  \left(-19 a + 27\right) x^{3} + \\ \left(6 a - 21\right) x^{2} +  \left(3 a + 9\right) x - 4 a - 7 $}    \\  \hline
$ \left[13, 18, 29\right] $ & $ \left[29, 9, 13\right] $ & $ \alpha^{2} + 4 $ & $ \begin{array}{l} (a - 1)_{5}^{12} \end{array} $ & $ \begin{array}{l} (a - 4)_{13}^{10} \cdot (2)^{20} \end{array}$ & \parbox{\curvewidth}{$ \left(9 a - 22\right) x^{6} + \left(-19 a + 21\right) x^{5} + \left(8 a - 95\right) x^{4} + \\ \left(-70 a - 6\right) x^{3} + \left(-23 a - 148\right) x^{2} + \\ \left(-7 a - 127\right) x - 18 a - 7 $}    \\  \hline
$ \left[13, 29, 181\right] $ & $ \left[181, 41, 13\right] $ & $ \frac{1}{3} \alpha^{2} + \frac{19}{3} $ & $ \begin{array}{l} (6 a - 37)_{29}^{12} \cdot (a - 6)_{3}^{12} \\ \cdot (a + 7)_{3}^{12} \cdot (4 a + 29)_{5}^{12} \end{array} $ & $ \begin{array}{l} (3 a - 19)_{13}^{10} \cdot (2)^{20} \end{array}$ & \parbox{\curvewidth}{$ \left(-16581 a - 119826\right) x^{6} + \\ \left(-52472 a - 379062\right) x^{5} + \left(-67729 a - 508419\right) x^{4} + \\ \left(-78876 a - 162464\right) x^{3} + \left(-44960 a + 21657\right) x^{2} + \\ \left(14402 a - 144114\right) x - 21885 a + 131494 $}    \\  \hline
$ \left[13, 41, 157\right] $ & $ \left[157, 25, 117\right] $ & $ \alpha^{2} + 12 $ & $ \begin{array}{l} (3 a + 20)_{11}^{12} \cdot (a - 7)_{17}^{12} \\ \cdot (a - 6)_{3}^{12} \cdot (a + 7)_{3}^{12} \end{array} $ & $ \begin{array}{l} (2 a - 11)_{13}^{10} \cdot (2)^{20} \end{array}$ & \parbox{\curvewidth}{$ \left(-1181 a + 7035\right) x^{6} +\\ \left(18395 a - 104353\right) x^{5} + \left(-116071 a + 664673\right) x^{4} + \\ \left(386042 a - 2282384\right) x^{3} + \left(-742970 a + 4253365\right) x^{2} + \\ \left(784564 a - 4063679\right) x - 253294 a + 2224205 $}    \\  \hline
$ \left[17, 5, 2\right] $ & $ \left[8, 10, 17\right] $ & $ \frac{1}{2} \alpha^{2} + \frac{5}{2} $ & $ \begin{array}{l} 1 \end{array} $ & $ \begin{array}{l} (3 a + 1)_{17}^{10} \cdot (a)_{2}^{30} \end{array}$ & \parbox{\curvewidth}{$ \left(-3 a + 4\right) x^{5} - x^{4} + \left(6 a - 2\right) x^{3} + \left(9 a - 5\right) x^{2} + \\ \left(-3 a + 8\right) x - 3 a + 6 $}     \\  \hline
$ \left[17, 15, 52\right] $ & $ \left[13, 9, 17\right] $ & $ \alpha^{2} + 4 $ & $ \begin{array}{l} (a)_{3}^{12} \end{array} $ & $ \begin{array}{l} (a - 4)_{17}^{10} \cdot (2)^{20} \end{array}$ & \parbox{\curvewidth}{$ -x^{6} - 2 a x^{5} + \left(3 a - 3\right) x^{4} +  \left(8 a + 4\right) x^{3} + \left(-19 a + 39\right) x^{2} + \\ \left(16 a - 30\right) x + 3 a - 36 $}    \\  \hline
\multirow{2}{*}{$ \left[17, 25, 50\right] $} & \multirow{2}{*}{$ \left[8, 50, 425\right] $} & \multirow{2}{*}{$ \frac{1}{10} \alpha^{2} + \frac{5}{2} $} & $ \begin{array}{l} (a)_{2}^{24} \cdot (2 a + 1)_{7}^{12} \end{array} $ & $ \begin{array}{l} (3 a + 1)_{17}^{10} \cdot (5)^{10} \end{array}$ & \parbox{\curvewidth}{$ \left(6 a - 2\right) x^{6} + \left(-50 a - 64\right) x^{5} + \left(285 a + 485\right) x^{4} + \\ \left(-485 a - 435\right) x^{3} + \left(-70 a + 90\right) x^{2} + \\ \left(244 a + 92\right) x + 70 a - 166 $}     \\  \cline{4-7}
& & & $ \begin{array}{l} (a)_{2}^{36} \cdot (a + 7)_{47}^{12}\\ \cdot (2 a + 1)_{7}^{12} \end{array} $ & $ \begin{array}{l} (3 a + 1)_{17}^{10} \cdot (5)^{10} \end{array}$ & \parbox{\curvewidth}{$ \left(315 a + 422\right) x^{6} + \left(1212 a + 1757\right) x^{5} + \left(-2605 a - 3240\right) x^{4} + \\ \left(-50 a - 625\right) x^{3} + \left(1730 a - 570\right) x^{2} + \\ \left(864 a - 212\right) x + 72 a + 456 $}     \\  \hline
$ \left[17, 46, 257\right] $ & $ \left[257, 23, 68\right] $ & $ \alpha^{2} + 11 $ & $ \begin{array}{l} 
\left(11, a + 5\right)^{12} \cdot \left(13, a + 10\right)^{12}\\ \cdot \left(2, a\right)^{12} \cdot \left(2, a + 1\right)^{24}\\ \cdot \left(59, a + 14\right)^{12}
\end{array} $ & $ \begin{array}{l}
\left(17, a + 6\right)^{10} \cdot \left(2, a + 1\right)^{20}
\end{array}$ & \parbox{\curvewidth}{$ \left(-22 a - 1802\right) x^{6} +\\ \left(3596 a + 11488\right) x^{5} + \left(-30700 a - 354072\right) x^{4} + \\ \left(243927 a + 1843299\right) x^{3} + \left(-616892 a - 5576996\right) x^{2} + \\ \left(647768 a + 5283496\right) x - 198146 a - 1755298 $}     \\  \hline
$ \left[17, 47, 548\right] $ & $ \left[137, 35, 272\right] $ & $ \alpha^{2} + 17 $ & $ \begin{array}{l} (14 a - 75)_{11}^{12} \cdot (4 a + 25)_{19}^{12} \\ \cdot (3 a - 16)_{2}^{12} \cdot (3 a + 19)_{2}^{24} \end{array} $ & $ \begin{array}{l} (8 a + 51)_{17}^{10} \end{array}$ & \parbox{\curvewidth}{$ \left(285 a + 1620\right) x^{6} +\\ \left(-2683 a - 19110\right) x^{5} + \left(13341 a + 76698\right) x^{4} + \\ \left(-28642 a - 195577\right) x^{3} + \left(40284 a + 245904\right) x^{2} + \\ \left(-27600 a - 177408\right) x + 8154 a + 51670 $}    \\  \hline
\multirow{2}{*}{$ \left[29, 7, 5\right] $} & \multirow{2}{*}{$ \left[5, 11, 29\right] $} & \multirow{2}{*}{$ \alpha^{2} + 5 $} & $ \begin{array}{l} (2)^{12} \cdot (2 a + 1)_{5}^{12} \end{array} $ & $ \begin{array}{l} (a - 5)_{29}^{10} \end{array}$ & \parbox{\curvewidth}{$ \left(-4 a - 5\right) x^{6} + \left(11 a + 37\right) x^{5} + \left(-65 a - 62\right) x^{4} + \\ \left(111 a + 104\right) x^{3} + \left(-28 a - 189\right) x^{2} + \\ \left(-28 a + 157\right) x - 19 a - 76 $}     \\  \cline{4-7}
& & & $ \begin{array}{l} (2)^{12} \cdot (5 a + 3)_{31}^{12} \cdot (2 a + 1)_{5}^{12} \end{array} $ & $ \begin{array}{l} (a - 5)_{29}^{10} \end{array}$ & \parbox{\curvewidth}{$ \left(18 a + 42\right) x^{6} + \left(62 a + 194\right) x^{5} + \left(-209 a + 31\right) x^{4} + \\ \left(-648 a - 471\right) x^{3} + \left(116 a + 338\right) x^{2} + \\ \left(244 a + 259\right) x - 65 a - 159 $}     \\  \hline
$ \left[29, 9, 13\right] $ & $ \left[13, 18, 29\right] $ & $ \frac{1}{4} \alpha^{2} + \frac{7}{4} $ & $ \begin{array}{l} (a)_{3}^{12} \end{array} $ & $ \begin{array}{l} (2)^{20} \cdot (3 a + 2)_{29}^{10} \end{array}$ & \parbox{\curvewidth}{$ \left(-25 a + 56\right) x^{6} + \left(172 a - 39\right) x^{5} + \left(-39 a + 561\right) x^{4} + \\ \left(312 a + 234\right) x^{3} + \left(73 a + 354\right) x^{2} + \\ \left(76 a + 141\right) x + 15 a + 37 $}    \\  \hline
\multirow{2}{*}{$ \left[29, 21, 45\right] $} & \multirow{2}{*}{$ \left[5, 33, 261\right] $} & \multirow{2}{*}{$ \frac{1}{3} \alpha^{2} + 5 $} & $ \begin{array}{l} (4 a + 1)_{19}^{12} \cdot (3)^{6} \end{array} $ & $ \begin{array}{l} (2)^{20} \cdot (a - 5)_{29}^{10} \end{array}$ & \parbox{\curvewidth}{$ \left(-a + 20\right) x^{6} + \left(-87 a - 18\right) x^{5} + \left(-48 a + 198\right) x^{4} + \\ \left(-8 a - 296\right) x^{3} + \left(384 a + 360\right) x^{2} + \\ \left(-384 a - 480\right) x + 144 a + 216 $}     \\  \cline{4-7}
& & & $ \begin{array}{l} (3)^{6} \end{array} $ & $ \begin{array}{l} (2)^{20} \cdot (a - 5)_{29}^{10} \end{array}$ & \parbox{\curvewidth}{$ \left(-102 a - 165\right) x^{5} + \left(45 a + 72\right) x^{4} +  \left(-174 a - 262\right) x^{3} +\\ \left(36 a - 66\right) x^{2} +  \left(69 a - 144\right) x + 5 a - 107 $}     \\  \hline
$ \left[29, 26, 53\right] $ & $ \left[53, 13, 29\right] $ & $ \alpha^{2} + 6 $ & $ \begin{array}{l} (a - 1)_{11}^{12} \cdot (a + 1)_{13}^{12} \\ \cdot (a + 6)_{17}^{12} \end{array} $ & $ \begin{array}{l} (2)^{20} \cdot (a - 6)_{29}^{10} \end{array}$ & \parbox{\curvewidth}{$ \left(-790 a + 1564\right) x^{6} +\\ \left(241 a - 12431\right) x^{5} + \left(-15139 a - 14345\right) x^{4} + \\ \left(-2950 a - 165614\right) x^{3} + \left(-51588 a - 116086\right) x^{2} + \\ \left(-58139 a - 53507\right) x + 12653 a - 123381 $}    \\  \hline
$ \left[41, 11, 20\right] $ & $ \left[5, 13, 41\right] $ & $ \alpha^{2} + 6 $ & $ \begin{array}{l} 1 \end{array} $ & $ \begin{array}{l} (2)^{20} \cdot (a - 6)_{41}^{10} \end{array}$ & \parbox{\curvewidth}{$ \left(a + 4\right) x^{6} + \left(6 a - 2\right) x^{5} + 17 x^{4} + \left(-12 a - 16\right) x^{3} + \left(24 a - 5\right) x^{2} + \\ \left(-54 a - 16\right) x + 33 a + 9 $}    \\  \hline
$ \left[53, 13, 29\right] $ & $ \left[29, 26, 53\right] $ & $ \frac{1}{4} \alpha^{2} + \frac{11}{4} $ & $ \begin{array}{l} (a + 6)_{23}^{12} \cdot (a - 1)_{5}^{12} \\ \cdot (a)_{7}^{12} \end{array} $ & $ \begin{array}{l} (2)^{20} \cdot (3 a + 5)_{53}^{10} \end{array}$ & \parbox{\curvewidth}{$ \left(-31 a + 70\right) x^{6} + \left(151 a - 322\right) x^{5} + \left(-405 a + 658\right) x^{4} + \\ \left(238 a - 846\right) x^{3} + \left(3288 a + 2437\right) x^{2} + \\ \left(-3262 a + 12157\right) x - 27420 a - 58255 $}    \\  \hline
$ \left[61, 9, 5\right] $ & $ \left[5, 17, 61\right] $ & $ \frac{1}{3} \alpha^{2} + \frac{7}{3} $ & $ \begin{array}{l} 1 \end{array} $ & $ \begin{array}{l} (2)^{20} \cdot (7 a + 4)_{61}^{10} \end{array}$ & \parbox{\curvewidth}{$ \left(a + 2\right) x^{6} + \left(-2 a - 15\right) x^{5} + \left(36 a - 4\right) x^{4} +  \left(72 a + 24\right) x^{3} + \\ \left(8 a - 24\right) x^{2} + \left(-48 a - 80\right) x - 24 a - 40 $}    \\  \hline
$ \left[73, 9, 2\right] $ & $ \left[8, 18, 73\right] $ & $ \frac{1}{2} \alpha^{2} + \frac{9}{2} $ & $ \begin{array}{l} (a)_{2}^{24} \cdot (2 a - 1)_{7}^{12} \end{array} $ & $ \begin{array}{l} (2 a - 9)_{73}^{10} \end{array}$ & \parbox{\curvewidth}{$ \left(-12 a - 6\right) x^{6} + \left(8 a + 82\right) x^{5} + \left(-51 a + 92\right) x^{4} + \\ \left(-126 a - 1\right) x^{3} + \left(-36 a + 35\right) x^{2} + \\ \left(32 a + 50\right) x + 10 a + 8 $}     \\  \hline
$ \left[73, 47, 388\right] $ & $ \left[97, 94, 657\right] $ & $ \frac{1}{8} \alpha^{2} + \frac{43}{8} $ & $ \begin{array}{l} (20 a + 109)_{101}^{12} \cdot (7 a + 38)_{2}^{24} \\ \cdot (7 a - 31)_{2}^{12} \cdot (2 a - 9)_{3}^{12} \\ \cdot (2 a + 11)_{3}^{12} \cdot (30 a + 163)_{79}^{12} \end{array} $ & $ \begin{array}{l} (22 a + 119)_{73}^{10} \end{array}$ & \parbox{\curvewidth}{$ \left(23 a - 43\right) x^{6} + \left(-149 a - 1221\right) x^{5} + \left(8675 a + 44883\right) x^{4} + \\ \left(-128038 a - 698079\right) x^{3} + \left(928849 a + 5037588\right) x^{2} + \\ \left(123515 a + 671208\right) x + 4023 a + 21640 $}    \\  \hline
$ \left[89, 11, 8\right] $ & $ \left[8, 22, 89\right] $ & $ \frac{1}{4} \alpha^{2} + \frac{11}{4} $ & $ \begin{array}{l} (a)_{2}^{24} \end{array} $ & $ \begin{array}{l} (7 a + 3)_{89}^{10} \end{array}$ & \parbox{\curvewidth}{$ -x^{5} + \left(-4 a + 2\right) x^{4} +  21 x^{3} + \left(-16 a + 64\right) x^{2} \\- 160 x + 142 a - 190 $}     \\  \hline
$ \left[97, 94, 657\right] $ & $ \left[73, 47, 388\right] $ & $ \frac{1}{3} \alpha^{2} + \frac{22}{3} $ & $ \begin{array}{l} (a - 4)_{2}^{12} \cdot (a + 5)_{2}^{12}\\ \cdot (14 a - 53)_{23}^{12} \cdot (4 a - 15)_{3}^{12}\\ \cdot (4 a + 19)_{3}^{12} \cdot (30 a + 143)_{41}^{12}\\ \cdot (10 a + 47)_{61}^{12} \end{array} $ & $ \begin{array}{l} (24 a + 115)_{97}^{10} \end{array}$ & \parbox{\curvewidth}{$ \left(-128252 a - 611298\right) x^{6} +\\ \left(-984572 a - 4709700\right) x^{5} + \left(-3071730 a - 15394554\right) x^{4} + \\ \left(-6889006 a - 20077475\right) x^{3} + \left(-39650571 a + 105355350\right) x^{2} + \\ \left(174191751 a - 679664106\right) x + 256866525 a - 973717416 $}    \\  \hline
\multirow{2}{*}{$ \left[101, 33, 45\right] $} & \multirow{2}{*}{$ \left[5, 66, 909\right] $} & \multirow{2}{*}{$ \frac{1}{12} \alpha^{2} + \frac{9}{4} $} & $ \begin{array}{l} (3)^{6} \cdot (2 a + 1)_{5}^{12} \cdot (7 a + 3)_{61}^{12} \end{array} $ & $ \begin{array}{l} (9 a + 5)_{101}^{10} \cdot (2)^{20} \end{array}$ & \parbox{\curvewidth}{$ \left(-216 a + 464\right) x^{6} + \left(-2304 a - 48\right) x^{5} + \left(-3984 a - 960\right) x^{4} + \\ \left(-864 a + 3088\right) x^{3} + \left(-720 a + 1422\right) x^{2} + \\ \left(-4047 a - 5322\right) x - 818 a - 2423 $}     \\  \cline{4-7}
& & & $ \begin{array}{l} (4 a + 3)_{19}^{12} \cdot (4 a + 1)_{19}^{12}\\ \cdot (3)^{6} \cdot (5 a + 3)_{31}^{12}\\ \cdot (2 a + 1)_{5}^{12} \end{array} $ & $ \begin{array}{l} (9 a + 5)_{101}^{10} \cdot (2)^{20} \end{array}$ & \parbox{\curvewidth}{$ \left(-5229 a + 4019\right) x^{6} +\\ \left(-6132 a - 6909\right) x^{5} + \left(44637 a - 2364\right) x^{4} + \\ \left(53094 a + 58660\right) x^{3} + \left(-39159 a + 19266\right) x^{2} + \\ \left(-30363 a - 55761\right) x - 16848 a - 16911 $}     \\  \hline
$ \left[109, 17, 45\right] $ & $ \left[5, 21, 109\right] $ & $ \alpha^{2} + 10 $ & $ \begin{array}{l} (2 a + 1)_{5}^{12} \end{array} $ & $ \begin{array}{l} (a - 10)_{109}^{10} \cdot (2)^{20} \end{array}$ & \parbox{\curvewidth}{$ \left(-8 a - 8\right) x^{6} - 16 x^{5} + \left(8 a + 72\right) x^{4} +  \left(152 a + 184\right) x^{3} + \\ \left(6 a + 84\right) x^{2} +  \left(-255 a - 339\right) x - 319 a - 524 $}    \\  \hline
\multirow{2}{*}{$ \left[113, 33, 18\right] $} & \multirow{2}{*}{$ \left[8, 66, 1017\right] $} & \multirow{2}{*}{$ \frac{1}{6} \alpha^{2} + \frac{11}{2} $} & $ \begin{array}{l} (3 a + 11)_{103}^{12} \cdot (a)_{2}^{24} \\ \cdot (3)^{6} \cdot (4 a - 1)_{31}^{12} \\ \cdot (2 a - 1)_{7}^{12} \cdot (2 a + 1)_{7}^{12} \end{array} $ & $ \begin{array}{l} (2 a - 11)_{113}^{10} \end{array}$ & \parbox{\curvewidth}{$ \left(122 a + 800\right) x^{6} + \left(-1509 a - 909\right) x^{5} + \left(36762 a - 85470\right) x^{4} + \\ \left(-116871 a + 265713\right) x^{3} + \left(-467682 a + 704460\right) x^{2} + \\ \left(-480528 a + 365352\right) x - 7616 a + 226442 $}     \\  \cline{4-7}
& & & $ \begin{array}{l} (a)_{2}^{24} \cdot (3)^{6}\\ \cdot (4 a + 1)_{31}^{12} \cdot (2 a + 1)_{7}^{12} \end{array} $ & $ \begin{array}{l} (2 a - 11)_{113}^{10} \end{array}$ & \parbox{\curvewidth}{$ \left(-418 a - 190\right) x^{6} + \left(1476 a - 660\right) x^{5} + \left(1146 a + 6810\right) x^{4} + \\ \left(2145 a + 2175\right) x^{3} + \left(-1437 a - 3489\right) x^{2} + \\ \left(-42 a - 2736\right) x + 830 a + 394 $}     \\  \hline
$ \left[137, 35, 272\right] $ & $ \left[17, 47, 548\right] $ & $ \alpha^{2} + 23 $ & $ \begin{array}{l} (2 a - 5)_{19}^{12} \cdot (a + 2)_{2}^{12}\\ \cdot (a - 1)_{2}^{12} \end{array} $ & $ \begin{array}{l} (6 a - 1)_{137}^{10} \end{array}$ & \parbox{\curvewidth}{$ \left(4 a + 6\right) x^{6} + \left(8 a + 36\right) x^{5} + \left(-4 a + 42\right) x^{4} +  \left(586 a + 1289\right) x^{3} + \\ \left(1066 a + 2808\right) x^{2} +  4 a x + 25596 a + 65566 $}    \\  \hline
$ \left[149, 13, 5\right] $ & $ \left[5, 26, 149\right] $ & $ \frac{1}{4} \alpha^{2} + \frac{11}{4} $ & $ \begin{array}{l} (3 a + 1)_{11}^{12} \end{array} $ & $ \begin{array}{l} (11 a + 7)_{149}^{10} \cdot (2)^{20} \end{array}$ & \parbox{\curvewidth}{$ 8 x^{6} + 96 x^{5} + \left(-24 a + 168\right) x^{4} + \left(-576 a - 808\right) x^{3} +\\ \left(66 a - 132\right) x^{2} +  \left(292 a + 47\right) x + 86 a - 87 $}    \\  \hline
$ \left[157, 25, 117\right] $ & $ \left[13, 41, 157\right] $ & $ \frac{1}{9} \alpha^{2} + \frac{16}{9} $ & $ \begin{array}{l} (a - 4)_{17}^{12} \cdot (3 a - 1)_{23}^{12}\\ \cdot (a)_{3}^{24} \cdot (a + 1)_{3}^{12} \end{array} $ & $ \begin{array}{l} (7 a + 5)_{157}^{10} \cdot (2)^{20} \end{array}$ & \parbox{\curvewidth}{$ \left(-3328 a - 7633\right) x^{6} + \\ \left(-17510 a - 39323\right) x^{5} + \left(-32518 a - 68044\right) x^{4} + \\ \left(-17960 a - 66720\right) x^{3} + \left(256 a - 51704\right) x^{2} + \\ \left(5184 a - 22864\right) x + 1432 a - 5264 $}    \\  \hline
$ \left[181, 41, 13\right] $ & $ \left[13, 29, 181\right] $ & $ \frac{1}{3} \alpha^{2} + \frac{13}{3} $ & $ \begin{array}{l} (a + 5)_{17}^{12} \cdot (3 a + 2)_{29}^{12}\\ \cdot (a)_{3}^{24} \cdot (a + 1)_{3}^{12} \end{array} $ & $ \begin{array}{l} (3 a - 13)_{181}^{10} \cdot (2)^{20} \end{array}$ & \parbox{\curvewidth}{$ \left(330 a + 1417\right) x^{6} + \left(11102 a + 1701\right) x^{5} + \left(1396 a + 59742\right) x^{4} + \\ \left(24016 a + 92792\right) x^{3} + \left(74408 a + 38064\right) x^{2} + \\ \left(35248 a + 26160\right) x - 5784 a + 21888 $}    \\  \hline
$ \left[233, 19, 32\right] $ & $ \left[8, 38, 233\right] $ & $ \frac{1}{8} \alpha^{2} + \frac{19}{8} $ & $ \begin{array}{l} (a)_{2}^{24} \cdot (a - 5)_{23}^{12}\\ \cdot (a + 5)_{23}^{12} \cdot (2 a + 1)_{7}^{12} \end{array} $ & $ \begin{array}{l} (11 a + 3)_{233}^{10} \end{array}$ & \parbox{\curvewidth}{$ \left(2348 a - 3554\right) x^{6} + \left(11828 a - 12348\right) x^{5} + \left(4498 a - 23598\right) x^{4} + \\ \left(12704 a + 9133\right) x^{3} + \left(-3151 a - 14433\right) x^{2} + \\ \left(5344 a - 1974\right) x + 18 a - 604 $}     \\  \hline
$ \left[257, 23, 68\right] $ & $ \left[17, 46, 257\right] $ & $ \frac{1}{8} \alpha^{2} + \frac{19}{8} $ & $ \begin{array}{l} (2 a + 3)_{13}^{12} \cdot (a + 2)_{2}^{12}\\ \cdot (a - 1)_{2}^{24} \cdot (4 a - 3)_{43}^{12} \\ \cdot (2 a + 9)_{47}^{12} \cdot (4 a + 13)_{53}^{12} \end{array} $ & $ \begin{array}{l} (8 a - 19)_{257}^{10} \end{array}$ & \parbox{\curvewidth}{$ \left(-2809 a - 7326\right) x^{6} +\\ \left(5069 a + 3572\right) x^{5} + \left(52427 a - 51416\right) x^{4} + \\ \left(249518 a + 105951\right) x^{3} + \left(-311115 a - 180355\right) x^{2} + \\ \left(156533 a - 20215\right) x - 34657 a + 19003 $}     \\  \hline
$ \left[269, 17, 5\right] $ & $ \left[5, 34, 269\right] $ & $ \frac{1}{4} \alpha^{2} + \frac{15}{4} $ & $ \begin{array}{l} (3 a + 1)_{11}^{12} \cdot (2 a + 1)_{5}^{12} \end{array} $ & $ \begin{array}{l} (2)^{20} \cdot (15 a + 11)_{269}^{10} \end{array}$ & \parbox{\curvewidth}{$ \left(-168 a - 272\right) x^{6} + \left(960 a + 1696\right) x^{5} + \left(472 a - 1008\right) x^{4} + \\
 \left(-4448 a - 1552\right) x^{3} + \left(358 a + 904\right) x^{2} + \\ \left(945 a + 1690\right) x $}    \\  \hline
$ \left[281, 17, 2\right] $ & $ \left[8, 34, 281\right] $ & $ \frac{1}{2} \alpha^{2} + \frac{17}{2} $ & $ \begin{array}{l} (a)_{2}^{36} \cdot (4 a + 1)_{31}^{12} \\ \cdot (2 a - 1)_{7}^{12} \cdot (2 a + 1)_{7}^{12} \end{array} $ & $ \begin{array}{l} (2 a - 17)_{281}^{10} \end{array}$ & \parbox{\curvewidth}{$ \left(-835 a + 1960\right) x^{6} + \left(1343 a + 7589\right) x^{5} + \left(19630 a + 6428\right) x^{4} + \\ \left(26923 a + 13601\right) x^{3} + \left(-6743 a + 44228\right) x^{2} + \\ \left(-5762 a + 18262\right) x + 17138 a - 23184 $}     \\  \hline
$ \left[389, 37, 245\right] $ & $ \left[5, 41, 389\right] $ & $ \frac{1}{5} \alpha^{2} + \frac{18}{5} $ & $ \begin{array}{l} (3 a + 1)_{11}^{12} \cdot (3 a + 2)_{11}^{12} \\ \cdot (4 a + 3)_{19}^{12} \cdot (4 a + 1)_{19}^{12}\\ \cdot (a + 6)_{29}^{12} \cdot (2 a + 1)_{5}^{12} \end{array} $ & $ \begin{array}{l} (2)^{20} \cdot (18 a + 13)_{389}^{10} \end{array}$ & \parbox{\curvewidth}{$ \left(-22952 a - 6848\right) x^{6} +\\ \left(162272 a - 61136\right) x^{5} + \left(296568 a +208208\right) x^{4} + \\ \left(-212600 a - 959344\right) x^{3} + \left(89874 a + 1610270\right) x^{2} + \\ \left(-428348 a - 1023457\right) x + 315516 a + 343397 $}    \\  \hline

\end{longtable}

\label{table2bend}
\end{landscape}
\addtolength{\voffset}{-3cm}
\addtolength{\hoffset}{1.1in}
\addtolength{\textwidth}{-160pt}

\begin{table}
\caption*{Table 2c}
\centering
\begin{tabular}{|l|l|l|l|c|} \hline
DAB & DAB reflex & $a$ & obstructions & curves \\ \hline

 $ \left[8, 14, 41\right] $ & $ \left[41, 7, 2\right] $ & $ \alpha^{2} + 3 $ & $ (a + 4)_{2} , (a - 3)_{2} $ & $ 2 $     \\ \hline
 $ \left[8, 26, 137\right] $ & $ \left[137, 13, 8\right] $ & $ \alpha^{2} + 6 $ & $ (3 a - 16)_{2} , (3 a + 19)_{2} $ & $ 2 $      \\ \hline
 $ \left[8, 30, 153\right] $ & $ \left[17, 15, 18\right] $ & $ \frac{1}{3} \alpha^{2} + 2 $ & $ (a + 2)_{2} , (a - 1)_{2} $ & $ 4 $      \\ \hline
 $ \left[12, 8, 13\right] $ & $ \left[13, 10, 12\right] $ & $ \frac{1}{2} \alpha^{2} + 2 $ & $ (a + 1)_{3} , (2) $ & $ 2 $      \\ \hline
 $ \left[12, 10, 13\right] $ & $ \left[13, 5, 3\right] $ & $ \alpha^{2} + 2 $ & $ (a + 1)_{3} , (2) $ & $ 2 $      \\ \hline
 $ \left[12, 14, 37\right] $ & $ \left[37, 7, 3\right] $ & $ \alpha^{2} + 3 $ & $ (a + 3)_{3} , (2) $ & $ 2 $      \\ \hline
 $ \left[12, 26, 61\right] $ & $ \left[61, 13, 27\right] $ & $ \alpha^{2} + 6 $ & $ (a - 3)_{3} , (2) $ & $ 2 $      \\ \hline
 $ \left[12, 26, 157\right] $ & $ \left[157, 13, 3\right] $ & $ \alpha^{2} + 6 $ & $ (a - 6)_{3} , (2) $ & $ 2 $      \\ \hline
 $ \left[12, 50, 325\right] $ & $ \left[13, 25, 75\right] $ & $ \frac{1}{5} \alpha^{2} + 2 $ & $ (a + 1)_{3} , (2) $ & $ 4 $      \\ \hline
 $ \left[44, 8, 5\right] $ & $ \left[5, 14, 44\right] $ & $ \frac{1}{2} \alpha^{2} + 3 $ & $ (2) , (3 a + 2)_{11} $ & $ 2 $      \\ \hline
 $ \left[44, 14, 5\right] $ & $ \left[5, 7, 11\right] $ & $ \alpha^{2} + 3 $ & $ (2) , (3 a + 2)_{11} $ & $ 2 $      \\ \hline
 $ \left[44, 42, 45\right] $ & $ \left[5, 21, 99\right] $ & $ \frac{1}{3} \alpha^{2} + 3 $ & $ (2) , (3 a + 2)_{11} $ & $ 4 $      \\ \hline
 $ \left[76, 18, 5\right] $ & $ \left[5, 9, 19\right] $ & $ \alpha^{2} + 4 $ & $ (2) , (4 a + 3)_{19} $ & $ 2 $      \\ \hline
 $ \left[172, 34, 117\right] $ & $ \left[13, 17, 43\right] $ & $ \frac{1}{3} \alpha^{2} + \frac{7}{3} $ & $ (2) , (4 a + 5)_{43} $ & $ 2 $      \\ \hline
 $ \left[236, 32, 20\right] $ & $ \left[5, 16, 59\right] $ & $ \frac{1}{2} \alpha^{2} + \frac{7}{2} $ & $ (2) , (7 a + 5)_{59} $ & $ 2 $      \\ \hline
\end{tabular}
\end{table}

\section{Application}
\label{sec:application}

Obviously, we hope that our list is useful for experimenting
with complex multiplication and hyperelliptic curves.
Additionally, this final section gives a cryptographic
application: the small coefficients of the curves
in our table allow for faster communication and arithmetic.

Cryptographic hyperelliptic curves are constructed
as follows using the theory of complex multiplication
(for details, see~\cite{hehcc18}).
\begin{enumerate}
\item
Compute the Igusa invariants $I_n(\widetilde{C})$ of
a genus-two curve $\widetilde{C}$ with CM
by an order~$\mathcal{O}_K$
over a number field~$L$.
\item Reduce
these invariants modulo a prime~$\mathfrak{p}$ of~$L$,
which yields elements of the residue field~$k=\mathcal{O}_L/\mathfrak{p}$.
\item
Construct a curve~$C$ over the finite field~$k$
with these invariants, using Mestre's algorithm.
\end{enumerate}
Then there is a relation between the CM-type
$(K,\Phi)$ of $\widetilde{C}$ and the number
of
$k$-points in the Jacobian groups of $C$
and its quadratic twist $C'$.
So with a good choice of $\Phi$ and $\mathfrak{p}$,
we can construct curves $C$
for which $J_C(k)$ has a prescribed prime order,
or other interesting cryptographic properties.

In the end, the coefficients of the curve are random-looking
elements of~$k$, so if $k$ has $q$ elements, these coefficients
take up about $\log_{10}(q)$ digits each,
where $q$ is a cryptographically large prime power.

Now if the CM-field~$K$
is one of the fields in our table,
we can do better: we can take $\widetilde{C}$
from our table, and let $C$
be $(\widetilde{C}\ \mathrm{mod}\ \mathfrak{p})$.
This curve then has coefficients of a simple
and elegant shape.
This saves bandwidth when communicating this curve.
It also saves ``carries''
in multiplication operations involving curve coefficients,
making them potentially much more efficient.

For example, in~\cite[Section~8, Example of Algorithm~3]{hmns}
a curve $C$ is constructed following the recipe 1.,2.,3.~with~$\Phi$ a certain CM-type of $K=[5,13,41]$
This curve is defined over a finite field
$k=\FF_{p^2}$, where
\small
\begin{align*}p
=&
 142003856595807482747635387048977088071520136032341569\\
&014612056864049709760143646636956724980664377491196079\\
&730519617723521029855649462172148699393958968638652107\\
&696147277436345811056227385195781997362304851932650270\\
&514293705125991379
\end{align*}
\normalsize
and $J_C(k)$ has a cryptographic
subgroup of order $r=2^{192}+18513$.
The curve $C$ is given by $C:y^2=\sum_{n=0}^6 {a_n} x^n$,
and simple transformations make $a_6$ small (either~$1$
or a small non-square in~$k$) and ensure $a_5=0$.
Then there are five coefficients $a_0,\ldots,a_4\in\FF_{p^2}$,
each taking up twice as much space as the number~$p$
written above,
hence more than $2000$ digits in total.

Now let us look up $K=[5,13,41]$ in the table.
Let $a$ be a root of $X^2+1-10$ over~$\QQ$.
We find that 
up to twist and up to conjugation of $K_0^r=\QQ(a)/\QQ$,
we have $\widetilde{C}:y^2 = f(x)$, where
$$f(x) = \left(-a + 3\right) x^{6} + \left(4 a - 8\right) x^{5} + 10 x^{4} + \left(-a + 20\right) x^{3} + \left(4 a + 5\right) x^{2} + \left(a + 4\right) x + 1.$$
Consequently, if we write by abuse of notation
$a$ also for a root of $X^2+X-10$ generating a quadratic
extension $k=\FF_{p^2}/\FF_p$, then the curve~$C$ is given by the same equation. Again up to twist and conjugation of $k/\FF_p$.

Conjugation does not affect the number of points of~$C(k)$,
and as $(a-2)$ is a non-square in~$k^*$, we find that the only
non-isomorphic twist of~$C$ is 
given by $y^2 = (a-2)f(x)$, which also has
very simple coefficients.
So one of these curves could take the place of the curve in~\cite{hmns}.

\begin{remark}
For completeness, we determine which twist of~$C$
gives a subgroup of order~$r$ in~$J_C(k)$.
Let $\pi\in K$ be the Frobenius endomorphism of~$C$.
Then $(\pi)=\mathfrak{p}_1^2\mathfrak{p}_2$, where
$p\mathcal{O}_K = \mathfrak{p}_1\overline{\mathfrak{p}_1}\mathfrak{p}_2$ by \cite[Lemma 21]{hmns}.
This fixes $\pi$ up to complex conjugation and roots of unity,
hence gives two candidates $N(\pi-1)$ and $N(-\pi-1)$
for the order of $J_C(k)$.
We compute these candidates and find that one of them,
let us call it~$n_1$, is divisible by $r$ and the other, $n_2$,
is not.
Now let $D=2 (0,1) - \infty$, that is,
$D$ is the divisor given by twice $P=(0,1)$ minus
both points at infinity.
We use Magma to
check $n_2 [D]\not=0\in J_C(k)$, which proves
$\#J_C(k)=n_1$, so~$C$ is itself the correct twist
(and indeed we easily verify $n_1[D]=0$).
We also check $(n_1/r) [D]\not=0\in J_C(k)$, which proves
that $(n_1/r)[D]$ generates the group of order $r$ in~$J_C(k)$.
\end{remark}

The following theorem gives our CM construction
as a canned result.
\begin{theorem}\label{thm:cannedcm}
Let $K$, $K^{\mathrm{r}}$, $f$, and~$\Delta(C)$
be as in an entry of Table 1a, 1b, or~2b
other than $\mathrm{DAB}=[5,5,5]$.
Let $\mathfrak{p} \nmid \Delta(C)$
be a prime of ${K_0^{\mathrm{r}}}$
that is not inert
in $K^{\mathrm{r}}/K_0^{\mathrm{r}}$
and let $k_{\mathfrak{p}}$ be its residue field.
Let $\overline{f}=(f\mod \mathfrak{p})$
and let $b\in k_{\mathfrak{p}}^*$ be a non-square.
Let $C_1$, $C_2$ be the curves
$y^2=\overline{f}$ and $y^2=b\overline{f}$
over $k_{\mathfrak{p}}$.

Let $\mathfrak{P}\mid\mathfrak{p}$ be a
prime of $K^{\mathrm{r}}$
and $\Phi^{\mathrm{r}}$ the CM-type of~$K^{\mathrm{r}}$
with reflex field~$K$ (uniquely determined up to complex conjugation).
Then the ideal $N_{\Phi^{\mathrm{r}}}(\mathfrak{P})\subset\mathcal{O}_K$ is principal, and generated
by an element~$\pi$ such that $\pi\overline{\pi}\in\QQ$.

Moreover, the endomorphism rings of $J(C_i)$
over $k_{\mathfrak{p}}$
contain subrings isomorphic to $\mathcal{O}_K$
and the isomorphisms can be chosen in such 
a way that $\{\mathrm{Frob}_{C_i, N(\mathfrak{p})}\} = \{\pm \pi\}$.
In particular, we have
$\{\#J(C_i)(k_{\mathfrak{p}})\}=\{N_{K/\QQ}(\pm \pi-1)\}$.
\end{theorem}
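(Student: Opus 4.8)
The plan is to reduce the statement to the reduction theory of CM abelian varieties, exactly in the spirit of the Remark above, where the analogous computation $(\pi)=\mathfrak{p}_1^2\mathfrak{p}_2$ for the field $[5,13,41]$ was carried out using \cite[Lemma~21]{hmns}. The only genuinely new ingredient beyond that reference is the bookkeeping needed to pass between the two quadratic twists $C_1$ and $C_2$.

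First I would set up the reduction. By Theorem~\ref{thm:mainlong} the curve $C$ is defined over $K_0^{\mathrm{r}}$; since $\mathfrak{p}\nmid\Delta(C)$, the equation $y^2=f$ defines a smooth curve over the localisation of $\mathcal{O}_{K_0^{\mathrm{r}}}$ at $\mathfrak{p}$, so $C$ and hence $J(C)$ have good reduction at $\mathfrak{p}$, with reduction $C_1:y^2=\overline f$ over $k_{\mathfrak{p}}$. Because $\mathfrak{p}$ is not inert in $K^{\mathrm{r}}/K_0^{\mathrm{r}}$, every prime $\mathfrak{P}\mid\mathfrak{p}$ of $K^{\mathrm{r}}$ has residue field $k_{\mathfrak{p}}$ and $N(\mathfrak{P})=N(\mathfrak{p})$, so the reduction of $J(C)_{K^{\mathrm{r}}}$ at $\mathfrak{P}$ is again $J(C_1)$ over $k_{\mathfrak{p}}$. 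Over $\overline{\QQ}$, $J(C)$ has CM by $\mathcal{O}_K$ of type $\Phi$, and $\Phi$ has reflex field $K^{\mathrm{r}}$ (this is the legend for $\Phi$; in case~1 it is automatic, since all CM-types of $K$ lie in a single $\Aut(K)$-orbit and the reflex field only depends on that orbit). By the main theorem of complex multiplication and its reduction statement (Shimura--Taniyama~\cite{shimura-taniyama}, in the packaged form \cite[Lemma~21]{hmns}), the reduction $J(C_1)$ carries an $\mathcal{O}_K$-action defined over $k_{\mathfrak{p}}$, and under it the $N(\mathfrak{p})$-power Frobenius endomorphism $\mathrm{Frob}_{C_1,N(\mathfrak{p})}$ is an element $\pi\in\mathcal{O}_K$ generating the ideal $N_{\Phi^{\mathrm{r}}}(\mathfrak{P})$; in particular that ideal is principal. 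That $\pi$ is a Weil $N(\mathfrak{p})$-number gives $\pi\overline\pi=N(\mathfrak{p})\in\QQ$, consistent with $N_{\Phi^{\mathrm{r}}}(\mathfrak{P})\,\overline{N_{\Phi^{\mathrm{r}}}(\mathfrak{P})}=N_{K^{\mathrm{r}}/\QQ}(\mathfrak{P})\mathcal{O}_K=N(\mathfrak{p})\mathcal{O}_K$.

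Next I would bring in the second curve. By Lemma~\ref{lem:aut} (which applies since $\mathrm{DAB}=[5,5,5]$ is excluded), $\Aut((C_1)_{\overline{k_{\mathfrak{p}}}})=\{1,\iota\}$, so by Proposition~\ref{prop:twist} the only nontrivial twist of $C_1$ over $k_{\mathfrak{p}}$ is the quadratic twist, which is $C_2:y^2=b\overline f$ as $b$ is a non-square. Over $\overline{k_{\mathfrak{p}}}$ there is a canonical identification $\mathrm{End}(J(C_1)_{\overline{k_{\mathfrak{p}}}})=\mathrm{End}(J(C_2)_{\overline{k_{\mathfrak{p}}}})$, under which the relative Frobenius endomorphisms satisfy $\mathrm{Frob}_{C_2,N(\mathfrak{p})}=-\mathrm{Frob}_{C_1,N(\mathfrak{p})}$, since the twisting cocycle sends Frobenius to $-1\in\{1,\iota\}$. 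Transporting the $\mathcal{O}_K$-action on $J(C_1)$ to $J(C_2)$ through this identification gives a $k_{\mathfrak{p}}$-rational $\mathcal{O}_K$-action on $J(C_2)$ with $\mathrm{Frob}_{C_2,N(\mathfrak{p})}=-\pi$, so $\{\mathrm{Frob}_{C_i,N(\mathfrak{p})}\}=\{\pm\pi\}$ for suitable choices of the embeddings $\mathcal{O}_K\hookrightarrow\mathrm{End}(J(C_i)/k_{\mathfrak{p}})$. Finally, for $\phi\in\mathcal{O}_K\subseteq\mathrm{End}(J(C_i))$ the degree of $\phi$ is $N_{K/\QQ}(\phi)$, so $\#J(C_i)(k_{\mathfrak{p}})=\deg(1-\mathrm{Frob}_{C_i,N(\mathfrak{p})})=N_{K/\QQ}(1-\mathrm{Frob}_{C_i,N(\mathfrak{p})})$, which equals $N_{K/\QQ}(\mathrm{Frob}_{C_i,N(\mathfrak{p})}-1)$ because $[K:\QQ]$ is even; hence $\{\#J(C_i)(k_{\mathfrak{p}})\}=\{N_{K/\QQ}(\pm\pi-1)\}$.

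The main obstacle is the input from complex multiplication used in the second paragraph: one must know that the $\mathcal{O}_K$-action descends to a $k_{\mathfrak{p}}$-rational action on the reduction (delicate when the reduction is not ordinary, and where one genuinely uses that $\mathfrak{p}$ is not inert so no residue-field extension intervenes), and one must identify the Frobenius ideal precisely as the reflex type norm $N_{\Phi^{\mathrm{r}}}(\mathfrak{P})$, rather than as $N_{\Phi^{\mathrm{r}}}(N_{L/K^{\mathrm{r}}}(\cdot))$ for some larger field $L$ over which the CM structure a priori needs to be defined. Both points are exactly what the Shimura--Taniyama reduction formula provides, given that $\Phi$ has reflex field $K^{\mathrm{r}}$; since this is the content of \cite[Lemma~21]{hmns}, already invoked in the Remark above, the remaining work is the twist bookkeeping of the previous paragraph.
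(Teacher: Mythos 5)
Your argument is correct and takes essentially the same route as the paper's proof: both rest on Theorem~\ref{thm:mainlong} (CM by $\mathcal{O}_K$ of type $\Phi$ over $K^{\mathrm{r}}$), good reduction from $\mathfrak{p}\nmid\Delta(C)$, the equality $k_{\mathfrak{P}}=k_{\mathfrak{p}}$ coming from non-inertness, and the Shimura--Taniyama formula giving $\mathrm{Frob}_{C_i,N(\mathfrak{p})}\mathcal{O}_K=N_{\Phi^{\mathrm{r}}}(\mathfrak{P})$ together with the Weil-number property $\pi\overline{\pi}\in\QQ$. The only difference is cosmetic, in the twist step: the paper observes that a generator with $\pi\overline{\pi}\in\QQ$ is unique up to the roots of unity $\pm 1$ (this is where excluding $[5,5,5]$ enters) and that twisting by a non-square flips the sign, while you derive the same sign flip via the twisting cocycle together with Lemma~\ref{lem:aut} and Proposition~\ref{prop:twist} --- the same computation in slightly different clothing.
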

The computation of~$\pi\in\mathcal{O}_K$
is straightforward using algebraic number theory.
Deciding which of the~$C_i$ has Frobenius~$\pi$
and which has Frobenius $-\pi$ can be done
by checking whether a random point
on the Jacobian
is annihilated by
$N_{K/\QQ}(\pm \pi -1)$.

Note that we have a surjective map
$\mathcal{O}_{K_0}=\ZZ_p[X]/(X^2+\epsilon X+(\epsilon-D^{\mathrm{r}})/4)
\rightarrow k_{\mathfrak{p}}$,
and the coefficients of~$C_i$
are represented by small elements of the ring~$\mathcal{O}_{K_0}$,
hence operations in the group $J(C_i)$
can be performed with a smaller number of
carrying operations compared to when using
curves with random coefficients.
\begin{proof}[{Proof of Theorem~\ref{thm:cannedcm}}]
Our assumptions imply $k_\mathfrak{P}=k_{\mathfrak{p}}$,
and our $\Phi^{\mathrm{r}}$ is the
\emph{reflex} of~$\Phi$
as defined in~\cite{shimura}.
Our curves have Jacobians with
endomorphism ring $\mathcal{O}_K$
of type~$\Phi$ over $K^{\mathrm{r}}$
by Theorem~\ref{thm:mainlong}.
Moreover, they have good reduction
at~$\mathfrak{P}$ by~$\mathfrak{p}\nmid \Delta(C)$.
Therefore,
by the Shimura-Taniyama
formula
(\cite[Theorem~1(ii) in
Section~13.1]{shimura}
or \cite[Theorem~4.1.2]{lang-CM}),
we have $\mathrm{Frob}_{C_i, N(\mathfrak{p})}\mathcal{O}_K=N_{\Phi^{\mathrm{r}}}(\mathfrak{P})$.
This proves that the latter ideal has a generator
$\pi$ with $\pi\overline{\pi}\in\QQ$.
Such a generator is unique up to roots of unity,
of which $\mathcal{O}_K$ contains only~$\pm 1$.
Since~$b$ is a non-square, twisting by it
changes the root of unity, hence $\{\pm \pi\}$
occurs exactly for $\{\overline{f}, b\overline{f}\}$.
\end{proof}

  \bibliographystyle{plain}
\bibliography{bib}

\end{document}